\date{} 
\title{Conformal weldings of random surfaces:\\ SLE and the quantum gravity zipper}
\author{Scott Sheffield}
\newif\ifhyper\IfFileExists{hyperref.sty}{\hypertrue}{\hyperfalse}
\ifhyper\usepackage{hyperref}\fi
\newif\ifdraft
\def\note#1/{\ifdraft {\bf [#1]}\fi}
\long\def\comment#1{}
\numberwithin{equation}{section}
\numberwithin{figure}{section}
\newtheorem{theorem}{Theorem}
\numberwithin{theorem}{section}
\newtheorem{corollary}[theorem]{Corollary}
\newtheorem{lemma}[theorem]{Lemma}
\newtheorem{proposition}[theorem]{Proposition}
\theoremstyle{remark}
\theoremstyle{remark}\newtheorem{remark}[theorem]{Remark}
\def\eref#1{(\ref{#1})}
\newcommand{\R}{\mathbb{R}}
\newcommand{\C}{\mathbb{C}}
\newcommand{\D}{\mathbb{D}}
\newcommand{\Z}{\mathbb{Z}}
\newcommand{\h}{\mathfrak{h}}
\newcommand{\density}{\rho}
\newcommand{\zcap}{\mathcal Z^{\mathrm{CAP}}}
\newcommand{\zlength}{\mathcal Z^{\mathrm{LEN}}}
\def\Harm{{\mathrm{Harm}}}
\def\Supp{{\mathrm{Supp}}}
\def\H{\mathbb{H}}
\def\Cov{\mathop{\mathrm{Cov}}}
\def\Var{\mathop{\mathrm{Var}}}
\def\even{\mathrm{E}}
\def\odd{\mathrm{O}}
\def\Im{{\rm Im}\,}
\def\Re{{\rm Re}\,}
\def\SLEkk#1/{$\mathrm{SLE}(#1)$}
\def\SLEr#1/{$\mathrm{SLE(\kappa;#1)}$}
\def\SLEkr#1;#2/{$\mathrm{SLE(#1;#2)}$}
\def\SLEkrkr/{$\mathrm{SLE(\kappa;\rho)}$}
\def\SLEk/{\SLEkk{\kappa}/}
\def\SLEtwo/{\SLEkk2/}
\def\SLE/{$\mathrm{SLE}$}
\def\CLEkk#1/{$\mathrm{CLE}(#1)$}
\def\CLEk/{\CLEkk{\kappa}/}
\def\CLE/{$\mathrm{CLE}$}
\def\Ito/{It\^o}
\def \eps {\varepsilon}
\def\proofof#1{{ \medbreak \noindent {\bf Proof of #1.} }}
\def\flowset{\mathbb F}
\def\density{{\rho}}
\begin{document} \maketitle \begin{abstract}
We construct a conformal welding of two Liouville quantum gravity random surfaces and show that the interface between them is a random fractal curve called the Schramm-Loewner evolution (SLE), thereby resolving a variant of a conjecture of Peter Jones.  We also demonstrate some surprising symmetries of this construction,
which are consistent with the belief that (path-decorated) random planar maps have (SLE-decorated) Liouville quantum gravity as a scaling limit.
We present several precise conjectures and open questions.
\end{abstract}

\newpage
\tableofcontents
\newpage

\medbreak {\noindent\bf Acknowledgments.}  For valuable insights, helpful conversations, and comments on early drafts, the author thanks Julien Dub\'edat, Bertrand Duplantier, Richard Kenyon, Greg Lawler, Jason Miller, Curtis McMullen, Steffen Rohde, Oded
Schramm, Stanislav Smirnov, Fredrik Johansson Viklund, Wendelin Werner, and David Wilson.  The author also thanks the organizers and participants of the PIMS 2010 Summer School in Probability (Seattle) and the Clay Mathematics Institute 2010 Summer School (B\'uzios, Brazil), where drafts of this paper were used as lecture notes for courses. We also thank Nathana\"el Berestycki, Ewain Gwynne, Xin Sun, and a reading group at the Newton Institute in Cambridge for helpful comments. The author was partially supported by a grant from the Simons Foundation while at the Newton Institute, and also by DMS-1209044, a fellowship from the Simons Foundation, and EPSRC grants {EP/L018896/1} and {EP/I03372X/1}.

\section{Introduction} \label{s.intro}
\subsection{Overview} \label{ss.overview}
Liouville quantum gravity and the Schramm-Loewner evolution (SLE) rank among the great mathematical physics discoveries of the last few decades.  Liouville quantum gravity, introduced in the physics literature by Polyakov in 1981 in the context of string theory, is a canonical model of a random two dimensional Riemannian manifold \cite{MR623209, MR623210}.  The Schramm-Loewner evolution, introduced by Schramm in 1999, is a canonical model of a random path in the plane that doesn't cross itself \cite{MR1776084, MR2153402}.  Each of these models is the subject of a large and active literature spanning physics and mathematics.

Our goal here is to connect these two objects to each other in the simplest possible way.  Roughly speaking, we will show that if one glues together two independent Liouville quantum gravity random surfaces along boundary segments (in a boundary-length-preserving way) --- and then conformally maps the resulting surface to a planar domain --- then the interface between the two surfaces is an SLE.

Peter Jones conjectured several years ago that SLE could be obtained in a similar way --- specifically, by gluing (what in our language amounts to) one Liouville quantum gravity random surface and one deterministic Euclidean disc.  Astala, Jones, Kupiainen, and Saksman showed that the construction Jones proposed produces a well-defined curve \cite{2009arXiv0909.1003A, MR2600118}, but Binder and Smirnov recently announced a proof (involving multifractal exponents) that this curve is not a form of SLE, and hence the original Jones conjecture is false \cite{privatesmirnov} (see Section \ref{ss.capstationary}).  Our construction shows that a simple variant of the Jones conjecture is in fact true.

Beyond this, we discover some surprising symmetries.  For example, it turns out that there is one particularly natural random simply connected surface (called a {\em $\gamma$-quantum wedge}) that has an infinite-length boundary isometric to $\R$ (almost surely) which contains a distinguished ``origin.''   Although this surface is simply connected, it is almost surely highly non-smooth and it has a random fractal structure.  We will explain precisely how it is defined in Section \ref{ss.lengthstationary}.  The origin divides the boundary into two infinite-length boundary arcs.  Suppose we glue (in a boundary-length preserving way) the right arc of one such surface to the left arc of an independent random surface with the same law, then conformally map the combined surface to the complex upper half plane $\H$ (sending the origin to the origin and $\infty$ to $\infty$ --- see figure below), and then {\em erase} the boundary interface.  The geometric structure of the combined surface can be pushed forward to give geometric structure (including an area measure) on $\H$.  It is natural to wonder how well one can guess, from this geometric structure on $\H$, where the now-erased interface used to be.

\begin {figure}[htbp]
\begin {center}  
\includegraphics [width=5in]{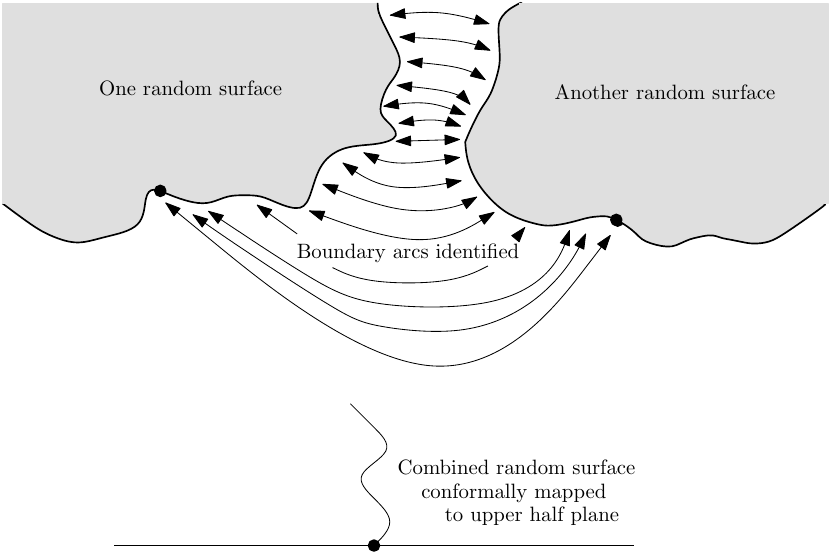}
\end {center}
\end {figure}

We will show that the geometric structure yields no information at all.  That is, the conditional law of the interface is that of an SLE in $\H$ {\em independently} of the underlying geometry (a fact formally stated as part of Theorem \ref{t.lengthstationary}).  Another way to put this is that conditioned on the combined surface, all of the information about the interface is contained in the {\em conformal structure} of the combined surface, which determines the embedding in $\H$ (up to rescaling $\H$ via multiplication by a positive constant, which does not affect the law of the path, since the law of SLE is scale-invariant).

This apparent coincidence is actually quite natural from one point of view.  We recall that one reason (among many) for studying SLE is that it arises as the fine mesh ``scaling limit'' of random simple paths on lattices.  Liouville quantum gravity is similarly believed (though not proved) to be the scaling limit of random discretized surfaces and random planar maps.  The independence mentioned above turns out to be consistent with (indeed, at least heuristically, a {\em consequence of}) certain scaling limit conjectures (and a related conformal invariance Ansatz) that we will formulate precisely (in Section \ref{ss.discretelimitdiscussion}) for the first time here.

Polyakov initially proposed Liouville quantum gravity as a model for the intrinsic Riemannian manifold parameterizing the space-time trajectory of a string \cite{MR623209}.  From this point of view, the welding/subdivision of such surfaces is analogous to the concatenation/subdivision of one-dimensional time intervals (which parameterize point-particle trajectories).
It seems natural to try to understand complicated string trajectories by decomposing them into simpler pieces (and/or gluing pieces together), which should involve subdividing and/or welding the corresponding Liouville quantum gravity surfaces.  The purpose of this paper is to study these weldings and subdivisions mathematically.  We will not further explore the physical implications here.

In a recent memoir \cite{2008arXiv0812.0183}, Polyakov writes that he first became convinced of the connection between the discrete models and Liouville quantum gravity in the 1980's after jointly deriving, with Knizhnik and Zamolodchikov, the so-called {\em KPZ formula} for certain Liouville quantum gravity scaling dimensions and comparing them with known combinatorial results for the discrete models \cite{MR947880}.  With Duplantier, the present author recently formulated and proved the KPZ formula in a mathematical way \cite{2008arXiv0808.1560D, MR2501276} (see also \cite{MR2506765, 2008arXiv0807.1036R}).  This paper is in some sense a sequel to \cite{2008arXiv0808.1560D}, and we refer the reader there for references and history.

We will find it instructive to develop Liouville quantum gravity along with a closely related construction called the {\em AC geometry} or {\em imaginary geometry}.  Both Liouville quantum gravity and the imaginary geometry are based on a simple object called the Gaussian free field.

\subsection{Random geometries from the Gaussian free field} \label{ss.randomgeometryGFF}
The two dimensional Gaussian free field (GFF) is a natural higher dimensional analog of Brownian motion that plays a prominent role in mathematics and physics.  See the survey \cite{Sh} and the introductions of \cite{MR2486487, SchrammSheffieldGFF2} for a detailed account.  On a planar domain $D$, one can define both a zero boundary GFF and a free boundary GFF (the latter being defined only modulo an additive constant, which we will sometimes fix arbitrarily).  In both cases, an instance of the GFF is a random sum $$h = \sum_i \alpha_i f_i,$$ where the $\alpha_i$ are i.i.d.\ mean-zero unit-variance normal random variables, and the $f_i$ are an orthonormal basis for a Hilbert space of real-valued functions on $D$ (or in the free boundary case, functions modulo additive constants) endowed with the Dirichlet inner product
$$(f_1,f_2)_\nabla := (2\pi)^{-1} \int_D \nabla f_1(z) \cdot \nabla f_2(z) dz.$$
The Hilbert space is the completion of either the space of smooth compactly supported functions $f: D \to \R$ (zero boundary) or the space of all smooth functions $f:D \to \R$ modulo additive constants with $(f,f)_\nabla < \infty$ (free boundary).  In each case, $h$ is understood not as a random function on $D$ but as a random distribution or generalized function on $D$.  (Mean values of $h$ on certain sets are also defined, but the value of $h$ at a particular point is not defined.)  One can fix the additive constant for the free boundary GFF in various ways, e.g., by requiring the mean value of $h$ on some set to be zero.  We will review these definitions in Section \ref{GFFoverview}.

There are two natural ways to produce a ``random geometry'' from the Gaussian free field.  The first construction is (critical) {\bf Liouville quantum gravity}.  Here, one replaces the usual Lebesgue measure $dz$ on a smooth domain $D$ with a random measure $\mu_h = e^{\gamma h(z)}dz$, where $\gamma \in [0,2)$ is a fixed constant and $h$ is an instance of (for now) the free boundary GFF on $D$ (with an additive constant somehow fixed --- we will actually consider various ways of fixing the additive constant later in the paper; one way is to require the mean value of $h$ on some fixed set to be $0$).
Since $h$ is not defined as a function on $D$, one has to use a regularization procedure to be precise: \begin{equation} \label{e.mudef} \mu = \mu_h := \lim_{\eps \to 0} \eps^{\gamma^2/2} e^{\gamma h_\eps(z)}dz,\end{equation}
where $dz$ is Lebesgue measure on $D$, $h_\eps(z)$ is the mean value of $h$ on the circle $\partial B_\eps(z)$ and the limit represents weak convergence (on compact subsets) in the space of measures on $D$.  (The limit exists almost surely, at least if $\eps$ is restricted to powers of two \cite{2008arXiv0808.1560D}.)  We interpret $\mu_h$ as the area measure of a random surface conformally parameterized by $D$.  When $x \in \partial D$, we let $h_\eps(x)$ be the mean value of $h$ on $D \cap \partial B_\eps(x)$.  On a linear segment of $\partial D$, we may define a boundary length measure by \begin{equation} \label{e.nudef}
\nu = \nu_h := \lim_{\eps \to 0} \eps^{\gamma^2/4}e^{\gamma h_\eps(x)/2}dx,\end{equation} where $dx$ is Lebesgue measure on $\partial D$.  (For details see \cite{2008arXiv0808.1560D}, which also relates the above random measures to the curvature-based action used to define Liouville quantum gravity in the physics literature.)

We could also parameterize the same surface with a different domain $\widetilde D$, and our regularization procedure implies a simple rule for changing coordinates.  Suppose that $\psi$ is a conformal map from a domain $\widetilde D$ to $D$ and write $\widetilde h$ for the
distribution on $\widetilde D$ given by $h \circ \psi + Q \log |\psi'|$
where $$Q := \frac{2}{\gamma} + \frac{\gamma}{2},$$ as in Figure \ref{qmapfig} \footnote{\label{foot::distdef} We use the same distribution composition notation as \cite{2008arXiv0808.1560D}: i.e., If $\phi$ is a conformal map from $D$ to a domain $\tilde D$ and $h$
is a distribution on $D$, then we define the pullback $h \circ
\phi^{-1}$ of $h$ to be a distribution on $\tilde D$ defined by $(h
\circ \phi^{-1}, \tilde \density) = (h, \density)$ whenever
$\density \in H_s(D)$ and $\tilde \density = |\phi'|^{-2} \density
\circ \phi^{-1}$.  (Here $\phi'$ is the complex derivative of
$\phi$, and $(h,\density)$ is the value of the distribution $h$
integrated against $\density$.) Note that if $h$ is a continuous
function (viewed as a distribution via the map $\density \to
\int_{D} \density(z) h(z) dz$), then the distribution $h \circ
\phi^{-1}$ thus defined is the ordinary composition of $h$ and
$\phi^{-1}$ (viewed as a distribution).
}.  Then $\mu_h$ is
almost surely the image under $\psi$ of the measure $\mu_{\widetilde h}$. That is, $\mu_{\widetilde h}(A) = \mu_h(\psi(A))$ for $A \subset \widetilde D$.  Similarly, $\nu_h$ is almost surely the image under $\psi$ of the measure $\nu_{\widetilde h}$ \cite{2008arXiv0808.1560D}.  In fact, \cite{2008arXiv0808.1560D} formally defines a {\bf quantum surface} to be an equivalence class of pairs $(D,h)$ under the equivalence transformations (see Figure \ref{qmapfig}) \begin{equation} \label{Qmap} (D,h) \to \psi^{-1}(D,h) := (\psi^{-1}(D), h \circ \psi + Q \log |\psi'|) = (\widetilde D, \widetilde h), \end{equation}
noting that both area and boundary length are well defined for such surfaces.  The invariance of $\nu_h$ under \eqref{Qmap} actually yields a definition of the quantum boundary length measure $\nu_h$ when the boundary of $D$ is not piecewise linear---i.e., in this case, one simply maps to the upper half plane (or any other domain with a piecewise linear boundary) and computes the length there.\footnote{It remains an open question whether the {\em interior} of a quantum surface is canonically a metric space.  A pair $(D,h)$ is a metric space parameterized by $D$ when, for distinct $x,y \in D$ and $\delta >0$, one defines the distance $d_\delta(x,y)$ to be the smallest number of Euclidean balls in $D$ of $\mu_h$ mass $\delta$ required to cover {\em some} continuous path from $x$ to $y$ in $D$.  We conjecture but cannot prove that for some constant $\beta$ the limiting metric $$\lim_{\delta \to 0} \delta^\beta d_\delta$$ exists a.s.\ and is invariant under the transformations described by \eqref{Qmap}.}

\begin {figure}[htbp]
\begin {center}
\includegraphics [width=3in]{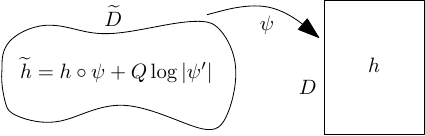}
\caption {\label{qmapfig} A quantum surface coordinate change.}
\end {center}
\end {figure}

The second construction involves ``flow lines'' of the unit vector field $e^{ih/\chi}$ where $\chi \not = 0$ is a fixed constant (see Figure \ref{flowfigure}), or alternatively flow lines of $e^{i(h/\chi + c)}$ for a constant $c \in [0,2\pi)$.
The author has proposed calling this collection of flow lines the {\bf AC geometry}\footnote{AC stands for ``altimeter-compass.'' If the graph of
$h$ is viewed as a mountainous terrain, then a hiker holding an analog
altimeter---with a needle indicating altitude modulo $2\pi \chi$---in one
hand and a compass in the other can trace an AC ray by walking at
constant speed (continuously changing direction as
necessary) in such a way that the two needles always point in the
same direction.} of $h$, but a recent series of joint works with Jason Miller uses the term {\bf imaginary geometry} \cite{ms2012imag1,ms2012imag2,ms2012imag3,ms2013imag4}. Makarov once proposed the term ``magnetic gravity'' in a lecture, suggesting that in some sense the AC geometry is to Liouville quantum gravity as electromagnetism is to electrostatics.  We will discuss additional interpretations in Section \ref{s.interpretation} and the appendix.

Although $h$ is a distribution and not a function, one can make sense of flow lines using
the couplings between the Schramm-Loewner evolution (SLE) and the GFF in \cite{S05, SchrammSheffieldGFF2}, which were further developed in \cite{MR2525778} and more recently in \cite{2009arXiv0909.5377M, 2010JSP...140....1H, 2010arXiv1006.1853I}.  The paths in these couplings are generalizations of the GFF contour lines of \cite{SchrammSheffieldGFF2}.

We define an {\bf AC surface} to be an equivalence class of pairs under the following variant of \eqref{Qmap}:
\begin{equation} \label{chimap} (D,h) \to (\psi^{-1}(D), h \circ \psi - \chi \arg \psi') = (\widetilde D, \widetilde h),\end{equation}
as in Figure \ref{acmapfig}.
The reader may observe that (at least when $h$ is smooth) the flow lines of the LHS of \eqref{chimap} are the $\psi$ images of the flow lines of the RHS.  To check this, first consider the simplest case: if $\psi^{-1}$ is a rotation (i.e., multiplication by a modulus-one complex number), then \eqref{chimap} ensures that the unit flow vectors $e^{ih/\chi}$ (as in Figure \ref{flowfigure}) are rotated by the same amount that $D$ is rotated.  The general claim follows from this, since every conformal map looks locally like the composition of a dilation and a rotation (see Section \ref{windingintro}).

\begin {figure}[htbp]
\begin {center}
\includegraphics [width=3in]{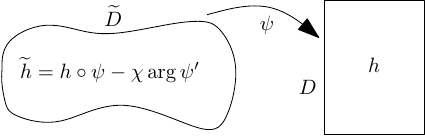}
\caption {\label{acmapfig} An AC surface coordinate change.}
\end {center}
\end {figure}

Recalling the conformal invariance of the GFF, if the $h$ on the left side of \eqref{Qmap} and \eqref{chimap} is a centered (expectation zero) Gaussian free field on $D$ then the distribution on the right hand side is a centered (expectation zero) GFF on $\widetilde D$ {\em plus} a deterministic function.  In other words, {\em changing the domain of definition is equivalent to recentering the GFF}.  The deterministic function is harmonic if $D$ is a planar domain, but it can also be defined (as a non-harmonic function) when $D$ is a surface with curvature (see \cite{2008arXiv0808.1560D}).  In what follows, we will often find it convenient to define quantum and AC surfaces on the complex half plane $\H$ using a (free or zero boundary) GFF on $\H$, sometimes recentered by the addition of a deterministic function that we will call $\h_0$.  We will state our main results in the introduction for fairly specific choices of $\h_0$.  We will extend these results to more general underlying geometries in Section \ref{couplingsection} and Section \ref{s.zipper}.

\subsection{Theorem statements: SLE/GFF couplings} \label{ss.couplingsubsecton}

We will give explicit relationships between the Gaussian free field and both ``forward'' and ``reverse'' forms of SLE in Theorems \ref{forwardcoupling} and \ref{reversecoupling} below.  We will subsequently interpret these theorems as statements about AC geometry and Liouville quantum gravity, respectively.
We will prove Theorem \ref{forwardcoupling} in Section \ref{subsec::couplingproofs} using a series of calculations.  These calculations are not really new to this paper, although the precise form of the argument we give has not been published elsewhere \footnote{The argument presented in Section \ref{subsec::couplingproofs}, together with the relevant calculations, appeared in lecture slides some time ago \cite{S05}, and is by now reasonably well known.  Dub\'edat presented another short derivation of this statement within a long foundational paper \cite{MR2525778}. More recent variants appear in \cite{2010JSP...140....1H, 2010arXiv1006.1853I}, and in the series of papers  \cite{ms2012imag1,ms2012imag2,ms2012imag3,ms2013imag4}, which studies the couplings in further detail.  Prior to these works, Kenyon and Schramm derived (but never published) a calculation relating SLE to the GFF in the case $\kappa = 8$.  One could also have inferred the existence of such a relationship from the fact --- due to Lawler, Schramm, and Werner --- that SLE$_8$ is a continuum scaling limit of uniform spanning tree boundaries \cite{MR2044671}, and the fact --- due to Kenyon --- that the winding number ``height functions'' of uniform spanning trees have the GFF as a scaling limit \cite{MR1782431, MR1872739, MR2415464}.}  Our main reason for proving Theorem \ref{forwardcoupling} in Section \ref{subsec::couplingproofs} is that we wish to simultaneously prove Theorem \ref{reversecoupling}.  Theorem \ref{reversecoupling} is completely new to this paper (and essential to the other results obtained in this paper), but it is very closely related to Theorem \ref{forwardcoupling}.  Proving the two results side by side allows us to highlight the similarities and differences.

We will not give a detailed introduction to SLE here, but there are many excellent surveys on SLE; see, e.g., the introductory references \cite{2003math......3354W, MR2457071} for basic SLE background.  To set notation, we recall that when $\eta$ is an instance of chordal SLE$_\kappa$ in $\H$ from $0$ to $\infty$, the conformal maps $g_t: \H \setminus \eta([0,t]) \to \H$, normalized so that $\lim_{z \to \infty} |g_t(z) - z| = 0$, satisfy \begin{equation} \label{e.firstloewner} d g_t(z) = \frac{2}{g_t(z) - W_t}dt,\end{equation} with $W_t = \sqrt{\kappa}B_t = g_t(\eta(t))$, where $B_t$ is a standard Brownian motion.  In fact, this can be taken as the definition of SLE$_\kappa$.  Rohde and Schramm proved in \cite{MR2153402} that for each $\kappa$ and instance of $B_t$, there is almost surely a unique continuous curve $\eta$ in $\H$ from $0$ to $\infty$, parameterized by $[0,\infty)$, for which \eqref{e.firstloewner} holds for all $t$.  When $\eta$ is parameterized so that \eqref{e.firstloewner} holds, the quantity $t$ is called the (half-plane) {\em capacity} of $\gamma([0,t])$.  The curve $\eta$ is almost surely a simple curve when $\kappa \in [0,4]$, a self-intersecting but non-space-filling curve when $\kappa \in (4,8)$, and a space-filling curve (ultimately hitting every point in $\H$) when $\kappa \geq 8$ \cite{MR2153402}.

The maps $$f_t(z) := g_t(z) - W_t$$ satisfy $$df_t(z) = \frac{2}{f_t(z)}dt - \sqrt{\kappa}dB_t,$$ and $f_t(\eta(t)) = 0$.  Throughout this paper, we will use $f_t$ rather than $g_t$ to describe the Loewner flow.  If $\eta_T = \eta([0,T])$ is a segment of an SLE trace, denote by $K_T$ the complement of the unbounded component of $\H \setminus \eta_T$. In the statements of Theorem \ref{forwardcoupling} and Theorem \ref{reversecoupling} below and throughout the paper, we will discuss several kinds of random distributions on $\H$. To show that these objects are well defined as distributions on $\H$, we will make implicit use of some basic facts about distributions:
\begin{enumerate}
\item If $h$ is a distribution on a domain $D$ then its restriction to a subdomain is a distribution on that subdomain. (This follows by simply restricting the class of test functions to those supported on the subdomain.)
\item If $h$ a distribution on a domain $D$ and $\phi$ is a conformal map from $D$ to a domain $\tilde D$ then $h \circ \phi^{-1}$ is a distribution on $\tilde D$. (Recall Footnote~\ref{foot::distdef}.)
\item An instance of the zero boundary GFF on a subdomain of $D$ is also well defined as a distribution on all of $D$. (See Section~2.1 of \cite{SchrammSheffieldGFF2}.)
\item If $h$ is an $L^1$ function on $D$, then $h$ can be understood as a distribution on $D$ defined by $(h, \density) = \int_{D} \density(z) h(z) dz$.
\end{enumerate}

In the proof of Theorem~\ref{forwardcoupling} in Section \ref{subsec::couplingproofs}, we will show that even though the function $\arg f_t'$ that appears in the theorem statement is a.s.\ unbounded,  it can also a.s.\ be understood as a distribution on $\H$ (see the discussion after the theorem statement below).

\begin{theorem} \label{forwardcoupling}
Fix $\kappa \in (0,4]$ and let $\eta_T$ be the segment of SLE$_\kappa$ generated by the Loewner flow
\begin{equation}\label{e.floewner} d f_t(z)  =  \frac{2}{f_t(z)}dt - \sqrt{\kappa}dB_t,\,\,\,\,\,\,\,\,f_0(z) = z\end{equation}
up to a fixed time $T>0$.
Write \begin{eqnarray*}  \h_0(z) &:=& \frac{-2}{\sqrt{\kappa}}  \arg z, \,\,\,\,\,\,\,\,\,\,\,\,\,\,\,\quad\quad\quad\quad \chi := \frac{2}{\sqrt{\kappa}} - \frac{\sqrt{\kappa}}{2},\\
\h_t(z) & := &
\h_0(f_t(z)) - \chi \arg f_t'(z).
\end{eqnarray*}
Here $\arg(f_t(z))$ (which is a priori defined only up to an additive multiple of $2\pi$) is chosen to belong $(0, \pi)$ when $f_t(z) \in \H$; we similarly define $\arg f_t'(z)$ by requiring that (when $t$ is fixed) it is continuous on $\H \setminus \eta_T$ and tends to $0$ at $\infty$.  Let $\widetilde h$ be an instance of the
zero boundary GFF on $\H$, independent of $B_t$.  Then the following two random distributions on $\H$ agree in law:\footnote{Note that $f_T$ maps $\H \setminus K_T$ to $\H$, so $(\H, h)$ and $(\H \setminus K_T, h \circ f_T - \chi \arg f_T')$ describe equivalent AC surfaces by \eqref{chimap}.}
\begin{eqnarray*}
 h & := &  \h_0 + \widetilde h. \\
 h \circ f_T - \chi \arg f_T'  & = &  \h_T + \widetilde h \circ f_T.
 \end{eqnarray*}
   The two distributions above also agree in law when $\kappa \in (4,8)$ if we replace $\widetilde h \circ f_T$ with a GFF on $\H \setminus \eta([0,t])$ (which in this case means the sum of an independent zero boundary GFF on each component of $\H \setminus \eta([0,t])$) and take $\h_t(z) := \lim_{s \to \tau(z)_-} \h_s(z)$ if $z$ is absorbed at time $\tau(z) \leq t$.
\end{theorem}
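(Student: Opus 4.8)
The plan is to reduce the equality in law of these two random distributions to a one-parameter family of one-dimensional Gaussian identities. Since both displayed distributions are linear in the test function, and since a joint characteristic function is recovered by applying the single-test-function statement to $\rho=\sum_j\theta_j\rho_j$, it suffices to show that for each fixed smooth compactly supported $\rho$ on $\H$ the real random variables $(\h_0+\widetilde h,\rho)$ and $(\h_T+\widetilde h\circ f_T,\rho)$ have the same law. The first is Gaussian with mean $(\h_0,\rho)$ and variance $\Sigma(\rho):=\iint\rho(x)G_\H(x,y)\rho(y)\,dx\,dy$, where $G_\H$ is the Green's function normalised to match the GFF covariance. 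For the second, condition on $\eta_T$ (equivalently on $B$): conformal invariance of the GFF makes $\widetilde h\circ f_T$ a zero-boundary GFF on $\H\setminus K_T$, so $(\h_T+\widetilde h\circ f_T,\rho)$ is then Gaussian with random mean $(\h_T,\rho)$ and random variance $\mathrm{Var}_T:=\iint\rho(x)G_{\H\setminus K_T}(x,y)\rho(y)\,dx\,dy$. The nontrivial point is that mixing a Gaussian of the smaller variance $\mathrm{Var}_T$ over the random mean $(\h_T,\rho)$ reconstitutes the Gaussian of variance $\Sigma(\rho)$.

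First I would analyse $\h_t(z)=-\tfrac{2}{\sqrt\kappa}\arg f_t(z)-\chi\arg f_t'(z)$ for fixed $z\in\H$. Applying \Ito/'s formula to \eqref{e.floewner}, together with $d\log f_t'(z)=-2f_t(z)^{-2}\,dt$ and $d\log f_t(z)=(2-\kappa/2)f_t(z)^{-2}\,dt-\sqrt\kappa\,f_t(z)^{-1}\,dB_t$, the $dt$-terms cancel exactly because $\tfrac{2}{\sqrt\kappa}(2-\tfrac\kappa2)=2\chi$, leaving the pure martingale $d\h_t(z)=2\,\Im\!\big(f_t(z)^{-1}\big)\,dB_t$. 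Hence $M_t^\rho:=(\h_t,\rho)$ is a continuous local martingale with $M_0^\rho=(\h_0,\rho)$ and $d\langle M^\rho\rangle_t=4\big(\int\rho(z)\,\Im(f_t(z)^{-1})\,dz\big)^2dt$. Next I would differentiate $\mathrm{Var}_t$: using $G_{\H\setminus K_t}(x,y)=G_\H(f_t(x),f_t(y))$ and the fact that $f_t(x)-f_t(y)$ and $f_t(x)-\overline{f_t(y)}$ have finite variation (the common $\sqrt\kappa\,dB_t$ term drops out), a short computation gives $\tfrac{d}{dt}G_\H(f_t(x),f_t(y))=-4\,\Im(f_t(x)^{-1})\,\Im(f_t(y)^{-1})$, whence $\tfrac{d}{dt}\mathrm{Var}_t=-\tfrac{d}{dt}\langle M^\rho\rangle_t$. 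Since $K_0=\emptyset$ gives $\langle M^\rho\rangle_0=0$ and $\mathrm{Var}_0=\Sigma(\rho)$, this integrates to the key identity $\langle M^\rho\rangle_t=\Sigma(\rho)-\mathrm{Var}_t$.

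With this identity in hand, the exponential local martingale $\exp\!\big(i\theta M_t^\rho+\tfrac{\theta^2}{2}\langle M^\rho\rangle_t\big)=e^{\theta^2\Sigma(\rho)/2}\exp\!\big(i\theta M_t^\rho-\tfrac{\theta^2}{2}\mathrm{Var}_t\big)$ — which I would upgrade to a genuine martingale by localising at the a.s.\ finite stopping times where $\Im f_t(z)$ first drops below $1/n$ on $\Supp\rho$ and passing to the limit by dominated convergence — yields $\E\big[e^{i\theta M_T^\rho-\theta^2\mathrm{Var}_T/2}\big]=e^{i\theta(\h_0,\rho)-\theta^2\Sigma(\rho)/2}$. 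The left side equals $\E\big[e^{i\theta(\h_T,\rho)}\,\E[e^{i\theta(\widetilde h\circ f_T,\rho)}\mid\eta_T]\big]=\E\big[e^{i\theta(\h_T+\widetilde h\circ f_T,\rho)}\big]$, since $(\widetilde h\circ f_T,\rho)$ is, conditionally on $\eta_T$, centred Gaussian with variance $\mathrm{Var}_T$ while $(\h_T,\rho)$ is $\eta_T$-measurable. Thus the characteristic functions of $(\h_0+\widetilde h,\rho)$ and $(\h_T+\widetilde h\circ f_T,\rho)$ agree for every $\theta$ and every $\rho$, proving the case $\kappa\in(0,4]$.

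For $\kappa\in(4,8)$ the curve hits $\R$ and itself, so $\H\setminus\eta([0,t])$ is the unbounded component together with the bounded components it has already cut off, and $K_t$ is the full hull. I would run the same computation for each fixed $z$ only up to its absorption time $\tau(z)\wedge T$, define $\h_t(z)$ by the prescribed left limit thereafter (so that it is frozen and contributes nothing to $dM_t^\rho$ for $t>\tau(z)$), and use that the Green's function of the disconnected domain $\H\setminus\eta([0,t])$ is the sum of the conformally transported Green's functions of its components, so that $\mathrm{Var}_t$ and its $t$-derivative keep the same form. The main obstacle, and the step requiring real care, is precisely this $\kappa\in(4,8)$ bookkeeping: verifying continuity in $t$ of $\mathrm{Var}_t$ and of $\h_t(z)$ across the times at which bubbles get cut off, controlling the local-martingale-to-martingale upgrade uniformly over the absorption structure, and checking that summing independent zero-boundary GFFs over the bounded components produces exactly the conditional variance dictated by the Loewner flow. (For $\kappa\le4$ the only genuinely technical point is the comparatively routine true-martingale upgrade mentioned above.)
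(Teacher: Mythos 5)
Your overall strategy coincides with the paper's: reduce to a single test function, cancel the drift via \Ito/'s formula to get $d\h_t(z)=\Im\frac{2}{f_t(z)}\,dB_t$, pair this with the decay identity $dG_t(x,y)=-\Im\frac{2}{f_t(x)}\Im\frac{2}{f_t(y)}\,dt$ so that the quadratic variation of $(\h_t,\rho)$ equals the variance lost by the conditional GFF, and then recombine using conformal invariance of the zero boundary GFF; your exponential-martingale/characteristic-function packaging is just an equivalent rewording of the paper's picture of a Brownian motion run for time $E_0(\rho)-E_T(\rho)$ plus an independent Gaussian of variance $E_T(\rho)$, followed by the one-dimensional-marginals characterization of the GFF.

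The genuine gap is in the step you dismiss as "comparatively routine": justifying that $M^\rho_t=(\h_t,\rho)$ is a continuous (local) martingale on all of $[0,T]$ with bracket $\Sigma(\rho)-\mathrm{Var}_t$. Your localizing times $\sigma_n=\inf\{t:\inf_{z\in\Supp\rho}\Im f_t(z)\le 1/n\}$ do \emph{not} increase to $T$: for any $\kappa>0$ the curve enters $\Supp\rho$ before time $T$ with positive probability, and on that event $\lim_n\sigma_n$ is (at most) the hitting time of $\Supp\rho$, so your dominated-convergence passage only yields the characteristic-function identity at time $T\wedge\lim_n\sigma_n$, not at $T$. The real work is to control $\h_t(z)$ for $z$ close to the already-traced curve so that the $dz$-integral can be interchanged with the \Ito/ integral (and so that $t\mapsto(\h_t,\rho)$ is continuous through the times when $\eta$ is inside $\Supp\rho$); the paper does this by using that $-C_t(z)$, the negative log conformal radius of $\H\setminus\eta([0,t])$ seen from $z$, has an exponential tail uniformly over $z\in\Supp\rho$, then optional stopping, Doob's inequality and Fubini to get the true martingale property, and the product-martingale argument ($\h_t(x)\h_t(y)+G_t(x,y)$ is a martingale for fixed $x,y$, then integrate) to identify the bracket. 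Once that is in place, your bounded complex exponential $|\exp(i\theta M_t+\tfrac{\theta^2}{2}\langle M^\rho\rangle_t)|\le e^{\theta^2\Sigma(\rho)/2}$ needs no localization at all; so the missing ingredient is precisely the Fubini/martingale justification, not the step you localized for. (Your $\kappa\in(4,8)$ paragraph correctly identifies the bookkeeping issues and matches the paper's treatment in outline, but it is likewise only a sketch; the paper resolves it with the same conformal-radius tail estimate and the limiting definition of $G_t$.)
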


\begin{noindent}{\bf Alternative statement of Theorem \ref{forwardcoupling}:} Using our coordinate change and AC surface definitions, we may state the theorem when $\kappa < 4$ somewhat more elegantly as follows: the law of the AC surface $(\H,h)$ is invariant under the operation of independently sampling $f_T$ using a Brownian motion and \eqref{e.floewner}, transforming the AC surface via the coordinate change $f_T^{-1}$ (going from right to left in Figure \ref{forwardftfig}\ \footnote{All figures in this paper are sketches, not representative simulations.} --- see also Figure \ref{forwardftarrowfig}) in the manner of \eqref{chimap}, and erasing the path $\eta_T$ (to obtain an AC surface parameterized by $\H$ instead of $\H \setminus \eta_T$).  We discuss the geometric intuition behind the alternative statement in Section \ref{windingintro}.
\end{noindent}
\vspace{.1in}

\begin {figure}[h]
\begin {center}
\includegraphics [width=4in]{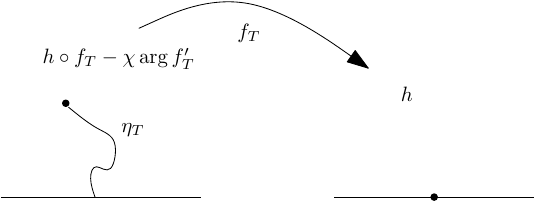}
\caption {\label{forwardftfig} Forward coupling.}
\end {center}
\end {figure}

Note that, as a function, $\h_T$ is not defined on $\eta_T$ itself. 
However, we will see in Section \ref{subsec::couplingproofs} that $\h_T$ is a.s.\ well defined as a distribution, independently of how we define it as a function on $\eta_T$ itself.  This will follow from the fact that, when $\kappa = 4$, this $\h_T$ is almost surely a bounded function off of $\eta_T$, and when $\kappa \not = 4$, the restriction of $\h_T$ to any compact subset of $\H$ is almost surely in $L^p$ for each $p < \infty$ (see Section \ref{couplingsection}).  The fact that $\widetilde h \circ f_T$ is well defined as a distribution on $\H$ (not just as a distribution on $\H \setminus \eta_T$) follows from conformal invariance of the GFF, and the fact (mentioned above, proved in \cite{SchrammSheffieldGFF2}) that a zero boundary GFF instance on a subdomain can be understood as a distribution on the larger domain.

Another standard approach for generating a segment $\eta_T$ of an SLE curve is via the reverse Loewner flow, whose definition is recalled in the statement of the following theorem. (Note that if $T$ is a fixed constant, then the law of the $\eta_T$ generated by reverse Loewner evolution is the same as that generated by forward Loewner evolution; see Figures \ref{forwardftfig} and \ref{reverseftfig}.)

\begin{theorem} \label{reversecoupling}
Fix $\kappa > 0$ and let $\eta_T$ be the segment of SLE$_\kappa$ generated by a reverse Loewner flow
\begin{equation} \label{e.rloewner} d f_t(z)  =  \frac{-2}{f_t(z)}dt - \sqrt{\kappa}dB_t,\,\,\,\,\,\,\,\,f_0(z) = z\end{equation}
up to a fixed time $T>0$.
Write \begin{eqnarray*}  \h_0(z)  &:=&  \frac{2}{\sqrt{\kappa}}  \log|z|, \,\,\,\,\,\,\,\,\,\,\,\,\,\,\,\quad\quad\quad\quad Q :=  \frac{2}{\sqrt{\kappa}} + \frac{\sqrt{\kappa}}{2}, \\
\h_t(z) &:=&  \h_0(f_t(z)) + Q \log |f_t'(z)|,
\end{eqnarray*}
and let $\widetilde h$ be an instance of the
free boundary GFF on $\H$, independent of $B_t$.  Then the following two random distributions (modulo additive constants) on $\H$ agree in law:\footnote{Note that $f_T$ maps $\H$ to $\H \setminus K_T$, so $(\H, h \circ f_T + Q \log |f_T'|)$ and $(\H \setminus K_T, h)$ describe equivalent quantum surfaces by \eqref{Qmap}.  Indeed,
$(\H, h \circ f_T + Q \log |f_T'|) = f_T^{-1} (\H \setminus K_T, h)$.}
\begin{eqnarray*} h & := & \h_0 + \widetilde h. \\
h \circ f_T + Q \log |f_T'| & = & \h_T + \widetilde h \circ f_T.
\end{eqnarray*}
\end{theorem}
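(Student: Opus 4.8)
The plan is to run a Schramm--Sheffield-style It\^o and martingale argument, tracking the characteristic functional of the random distribution $\h_T + \widetilde h\circ f_T$ tested against smooth compactly supported \emph{mean-zero} functions $\rho$ (so that the ``modulo additive constants'' caveat is handled automatically and the free-boundary Green's function pairings make sense).

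The first step is an It\^o computation for $\h_t(z)$. Differentiating the reverse Loewner equation \eqref{e.rloewner} in $z$ gives $d\log|f_t'(z)| = 2\,\Re\big(f_t(z)^{-2}\big)\,dt$ with no martingale part, while It\^o's formula applied to the harmonic function $w\mapsto \tfrac{2}{\sqrt\kappa}\log|w|$ composed with the complex semimartingale $f_t(z)$ produces a $dt$ term proportional to $\Re\big(f_t(z)^{-2}\big)$ together with the martingale term $-2\,\Re\big(f_t(z)^{-1}\big)\,dB_t$. Adding $Q\log|f_t'(z)|$ with $Q=\tfrac2{\sqrt\kappa}+\tfrac{\sqrt\kappa}{2}$ is exactly what cancels the two $dt$ terms, leaving
\[
d\h_t(z) \;=\; -2\,\Re\big(f_t(z)^{-1}\big)\,dB_t;
\]
this is where the value of $Q$ is forced, and (after a Fubini argument) it shows $X_t := (\h_t,\rho)$ is a continuous local martingale with $X_0=(\h_0,\rho)$. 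A second, parallel It\^o computation governs its bracket. Writing $G$ for the covariance kernel of the free-boundary GFF $\widetilde h$ on $\H$ (the Neumann Green's function, modulo an additive constant), one checks --- using crucially that the $dB_t$ contributions to $df_t(z)$ and $df_t(w)$ are \emph{identical}, so they cancel in $d\big(f_t(z)-f_t(w)\big)$ and in $d\big(f_t(z)-\overline{f_t(w)}\big)$ --- that
\[
\tfrac{d}{dt}\,G\big(f_t(z),f_t(w)\big) \;=\; -4\,\Re\big(f_t(z)^{-1}\big)\,\Re\big(f_t(w)^{-1}\big) \;=\; -\tfrac{d}{dt}\,\langle \h_\cdot(z),\h_\cdot(w)\rangle_t .
\]
Hence $G\big(f_t(z),f_t(w)\big)+\langle \h_\cdot(z),\h_\cdot(w)\rangle_t \equiv G(z,w)$; integrating against $\rho\otimes\rho$ yields the conservation identity $\langle X_\cdot\rangle_t = \langle\rho,\rho\rangle_G - v_t(\rho)$, where $v_t(\rho):=\iint G\big(f_t(z),f_t(w)\big)\rho(z)\rho(w)\,dz\,dw \ge 0$. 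In particular $\langle X_\cdot\rangle_t \le \langle\rho,\rho\rangle_G$ for all $t$.

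Next, condition on the family $(f_s)_{s\le t}$ (equivalently, on $(B_s)_{s\le t}$). Then $\h_t$ is a deterministic function, so $X_t$ is a constant; and $\widetilde h\circ f_t$ is $\widetilde h$ --- a free-boundary GFF independent of $B$ --- precomposed with the fixed conformal map $f_t:\H\to\H\setminus K_t$, so $(\widetilde h\circ f_t,\rho) = (\widetilde h,\phi_t^\rho)$ with $\phi_t^\rho := (\rho\circ f_t^{-1})\,|(f_t^{-1})'|^2$ a fixed smooth function supported on a compact subset of $\H\setminus K_t$. By conformal invariance of the Dirichlet energy, the conditional law of $(\widetilde h,\phi_t^\rho)$ is $\mathcal N\big(0,v_t(\rho)\big)$ with $v_t(\rho)$ as above, so
\[
\E\big[e^{\,i(\h_t+\widetilde h\circ f_t,\,\rho)}\big] \;=\; \E\Big[e^{\,iX_t}\,\E\big[e^{\,i(\widetilde h,\phi_t^\rho)}\,\big|\,(B_s)_{s\le t}\big]\Big] \;=\; e^{-\frac12\langle\rho,\rho\rangle_G}\;\E\Big[e^{\,iX_t+\frac12\langle X_\cdot\rangle_t}\Big].
\]
The last expectation equals $e^{\,iX_0}$ times the expectation of the stochastic exponential of the continuous local martingale $i(X_\cdot-X_0)$; since that exponential has modulus $e^{\frac12\langle X_\cdot\rangle_t}\le e^{\frac12\langle\rho,\rho\rangle_G}$ it is a bounded (hence true) martingale with expectation $1$. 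Therefore $\E\big[e^{\,i(\h_t+\widetilde h\circ f_t,\,\rho)}\big] = e^{\,i(\h_0,\rho)-\frac12\langle\rho,\rho\rangle_G}$ for every $t$, and this is precisely the characteristic functional of $\h_0+\widetilde h$ at $\rho$. Ranging over all mean-zero $\rho$ (and linear combinations of finitely many of them) identifies all finite-dimensional marginals, hence the law; taking $t=T$ gives the theorem.

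The main content is really the \emph{mechanism} just used: the conservation identity $\langle X_\cdot\rangle_t + v_t(\rho)=\langle\rho,\rho\rangle_G$ with $v_t(\rho)\ge0$ is exactly what forces the mixture (over the Brownian randomness) of the conditionally Gaussian laws --- which have \emph{random} mean $X_t$ \emph{and} random variance $v_t(\rho)$ --- to collapse back to a single Gaussian. The technical obstacle is then to make the formal manipulations rigorous for the distribution-valued process $t\mapsto\h_t+\widetilde h\circ f_t$: one must justify the joint-in-$(t,z)$ It\^o calculus leading to $d\h_t(z)=-2\Re(f_t(z)^{-1})\,dB_t$ and the (stochastic) Fubini exchange that upgrades it to the martingale property of $(\h_t,\rho)$; check that $\phi_t^\rho$ depends suitably continuously on $\rho$ and that $(\widetilde h,\phi_t^\rho)$ is legitimate even though $\phi_t^\rho$ is $B$-measurable (using $\widetilde h\perp B$); and note that, since $\operatorname{supp}\rho$ is compact and $T$ is fixed, $f_t$ and $f_t'$ stay bounded and bounded away from their degenerate values on $[0,T]\times\operatorname{supp}\rho$ even though $f_t'$ blows up near the growing hull $K_t$ globally (cf.\ the integrability remarks after Theorem \ref{forwardcoupling}). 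These points are handled by the techniques of \cite{S05, SchrammSheffieldGFF2}; and because the reverse Loewner flow $f_t$ is well defined for every $\kappa>0$ and we never use fine properties of $K_t$, no case division among $\kappa\le4$, $\kappa\in(4,8)$, $\kappa\ge8$ is needed here, in contrast with Theorem \ref{forwardcoupling}.
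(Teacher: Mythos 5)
Your proposal is correct and follows essentially the same route as the paper's proof: the same It\^o computation in which the choice of $Q$ cancels the drift to give $d\h_t(z)=-2\Re\bigl(f_t(z)^{-1}\bigr)\,dB_t$, the same conservation identity matching the quadratic variation of $(\h_t,\rho)$ with the decrease of $E_t(\rho)=\iint\rho(y)\,G\bigl(f_t(y),f_t(z)\bigr)\rho(z)\,dy\,dz$, the same observation that conditionally on the driving Brownian motion $(\widetilde h\circ f_t,\rho)$ is a centered Gaussian of variance $E_t(\rho)$, and the same conclusion via the one-dimensional-projection characterization of the free boundary GFF. The only noteworthy difference is the final step: you collapse the Gaussian mixture by taking characteristic functionals and using the bounded exponential local martingale $e^{iX_t+\frac12\langle X\rangle_t}$, whereas the paper phrases it as a Brownian motion run for time $E_0(\rho)-E_T(\rho)$ plus a conditionally independent Gaussian of variance $E_T(\rho)$; your variant has the minor advantage of not requiring $(\h_t,\rho)$ to be shown a true (rather than local) martingale, which the paper establishes separately via optional stopping.
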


\begin {figure}[htbp]
\begin {center}
\includegraphics [width=4in]{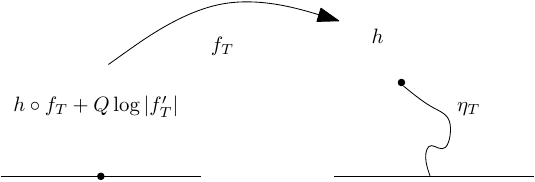}
\caption {\label{reverseftfig} Reverse coupling.}
\end {center}
\end {figure}

\begin{noindent}{\bf Alternative statement of Theorem \ref{reversecoupling}:} A more elegant way to state the theorem is that the law of $(\H,h)$ is invariant under the operation of independently sampling $f_T$, cutting out $K_T$ (equivalent to $\eta_T$ when $\kappa \leq 4$), and transforming via the coordinate change $f_T^{-1}$ (going from right to left in Figure \ref{reverseftfig}) in the manner of \eqref{Qmap}.
\end{noindent}
\vspace{.1in}

Both theorems give us an {\em alternate} way of sampling a distribution with the law of $h$ --- i.e., by first sampling the $B_t$ process (which determines $\eta_T$), then sampling a (fixed or free boundary) GFF $\widetilde h$ and taking
$$h = \h_T + \widetilde h \circ f_T.$$  This two part sampling procedure produces a coupling of $\eta_T$ with $h$.  In the forward SLE setting of Theorem \ref{forwardcoupling}, it was shown in \cite{MR2525778} that in any such coupling, $\eta_T$ is almost surely equal to a particular path-valued {\em function} of $h$.  (This was also done in \cite{SchrammSheffieldGFF2} in the case $\kappa = 4$.)  In other words, in such a coupling, $h$ determines $\eta_T$ almost surely.  This is important for our geometric interpretations.  Even though $h$ is not defined pointwise as a function, we would like to geometrically interpret $\eta$ as a level set of $h$ (when $\kappa = 4$) or a flow line of $e^{ih/\chi}$ (when $\kappa < 4$), as we stated above and will explain in more detail in Section \ref{windingintro}.  It is thus conceptually natural that such curves are uniquely determined by $h$ (as they would be if $h$ were a smooth function, see Section \ref{windingintro}).

As mentioned earlier, this paper introduces and proves Theorem \ref{reversecoupling} while highlighting its similarity to Theorem \ref{forwardcoupling}.   Indeed, it won't take us much more work to prove Theorems \ref{forwardcoupling} and \ref{reversecoupling} together than it would take to prove one of the two theorems alone.  It turns out that in both Figure \ref{forwardftfig} (which illustrates Theorem \ref{forwardcoupling}) and Figure \ref{reverseftfig} (which illustrates Theorem \ref{reversecoupling}), the field illustrated on the left hand side of the figure (which agrees with $h$ in law) actually determines $\eta_T$ and the map $f_T$, at least when $\kappa < 4$.  In the former context (Figure \ref{forwardftfig}) this a major result due to Dub\'edat \cite{MR2525778} (see also the exposition on this point in \cite{ms2012imag1}).  It says that a certain ``flow line'' is a.s.\ uniquely determined by $h$.  The statement in the latter context is a major result obtained in this paper, stated in Theorems \ref{conformalwelding} and \ref{conformalweldingunique}.  With some hard work, we will be able to show that the map $f_T$ describes a conformal welding in which boundary arcs of equal quantum boundary length are ``welded together''.  Once we have this, the fact that the boundary measure uniquely characterizes $f_T$ will be obtained by applying a general ``removability'' result of Jones and Smirnov, as we will explain in Section \ref{ss.confweldingintro}.

\subsection{Theorem statements: conformal weldings} \label{ss.confweldingintro}

We will now try to better understand Theorem \ref{reversecoupling} in the special case $\kappa < 4$. Note that a priori the $h$ in Theorem \ref{reversecoupling} is defined only up to additive constant.  We can either choose the constant arbitrarily (e.g., by requiring that the mean value of $h$ on some set be zero) or avoid specifying the additive constant and consider the measures $\mu_h$ and $\nu_h$ to be defined only up to a global multiplicative constant.  The choice does not affect the theorem statement below.

\begin{theorem} \label{conformalwelding}
Suppose that $\kappa < 4$ and that $h$ and $\eta_T$ are coupled in the way described at the end of the previous section, i.e., $h$ is generated by first sampling the $B_t$ process up to time $T$ in order to generate $f_T$ via a reverse Loewner flow, and then choosing $\widetilde h$ independently and writing $h = \h_T + \widetilde h \circ f_T$, and $\eta_T\bigl((0,T]\bigr) = \H \setminus f_T \H$.\footnote{It is not known whether an analog of Theorem \ref{conformalwelding} can obtained in the case $\kappa=4$.  The standard procedure for constructing the boundary measure $\nu_h$ breaks down when $\kappa=4, \gamma=2$, but a scheme was introduced \cite{duplantier2012critical, duplantier2012renormalization} to create a non-trivial boundary measure $\nu_h$.  The open problems listed in Section \ref{s.questions} also also address a related question in the $\kappa > 4$ setting.}
  Given a point $z$ along the path $\eta_T$, let $z_- < 0 < z_+$ denote the two points in $\R$ that $f_T$ (continuously extended to $\R$) maps to $z$.  Then almost surely $$\nu_h([z_-, 0]) = \nu_h([0,z_+])$$
for all $z$ on $\eta_T$.
\end{theorem}

Theorem \ref{conformalwelding} is a relatively difficult theorem, and it will be the last thing we prove.We next define $R=R_h: (-\infty,0] \to [0, \infty)$ so that $\nu_h([x, 0]) = \nu_h([0,R(x)])$ for all $x$ (recall that $\nu$ is a.s.\ atom free \cite{2008arXiv0808.1560D}).  This $R$ gives a homeomorphism from $[0_-, 0]$ to $[0,0_+]$ that we call a {\bf conformal welding} of these two intervals.  We stress that the values $0_-$ and $0_+$ depend on $T$, but the overall homeomorphism $R$ between $(-\infty,0]$ and $[0, \infty)$ is determined by the boundary measure $\nu_h$, whose law does not depend on $T$ (although the coupling between $h$, $\tilde h$, and $\eta_T$ described in the theorem statement clearly depends on $T$).  Since $\eta_T$ is simple, it clearly determines the restriction of $R$ to $[0_-, 0]$.  (See Figure \ref{zipperfig}.)  It turns out that $R$ also determines $\eta_T$:

\begin{theorem}\label{conformalweldingunique}
For $\kappa < 4$, in the setting of Theorem \ref{conformalwelding}, the homeomorphism $R$ from $[0_-, 0]$ to $[0,0_+]$ uniquely determines the curve $\eta_T$.  In other words, it is almost surely the case that if $\widetilde \eta_{\tilde T}$ is any other simple curve in $\H$ such that the homeomorphism induced by its reverse Loewner flow is the same as $R$ on $[0_-, 0]$, then $\widetilde \eta_{\tilde T} = \eta_T$.  In particular, $h$ determines $\eta_T$ almost surely.
\end{theorem}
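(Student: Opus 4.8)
\textbf{Proof proposal for Theorem~\ref{conformalweldingunique}.}

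The plan is to reduce the uniqueness of $\eta_T$ to the \emph{conformal removability} of the $\mathrm{SLE}_\kappa$ trace when $\kappa<4$, and then to invoke the known removability results of Rohde--Schramm and Jones--Smirnov. Recall that a compact set $E\subset\widehat{\C}$ is conformally removable if every homeomorphism of $\widehat{\C}$ that is conformal on $\widehat{\C}\setminus E$ is a M\"obius transformation. Fix a realization of $\eta_T$ (hence of $f_T$ and of the welding homeomorphism $R$), and let $\widetilde\eta_T$ be another simple curve in $\H$, of the same half-plane capacity $T$, whose reverse Loewner flow $\widetilde f_T\colon\H\to\H\setminus\widetilde\eta_T$ induces the same welding $R$; we must show $\widetilde\eta_T=\eta_T$. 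Since $\kappa<4$, so that $K_T$ is a simple curve, and $\eta_T,\widetilde\eta_T$ are simple curves, the domains $\H\setminus\eta_T$ and $\H\setminus\widetilde\eta_T$ have locally connected boundaries, so $f_T$ and $\widetilde f_T$ extend continuously to $\R$ and the welding data is defined as in Theorem~\ref{conformalwelding}.

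Set $\phi:=\widetilde f_T\circ f_T^{-1}\colon\H\setminus\eta_T\to\H\setminus\widetilde\eta_T$, a conformal isomorphism. I would analyze $\phi$ on the prime-end boundary: $f_T^{-1}$ carries the part of $\partial(\H\setminus\eta_T)$ lying on $\R$ onto $\R$ minus the welding interval, and carries the two sides of $\eta_T$ onto the two welding intervals, sending a point $z\in\eta_T$ to $x$ from one side and to $R(x)$ from the other; then $\widetilde f_T$ carries these onto $\R$ and onto the two sides of $\widetilde\eta_T$. The key point --- and the only place the hypothesis enters --- is that, because $\widetilde\eta_T$ is welded by the \emph{same} $R$, the two points of $\widetilde\eta_T$ receiving $x$ (from the left side) and $R(x)$ (from the right side) coincide. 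Hence $\phi$ extends to a continuous map on $\overline{\H\setminus\eta_T}$ that is single-valued across $\eta_T$, maps $\eta_T$ homeomorphically onto $\widetilde\eta_T$, maps $\R$ homeomorphically onto $\R$, is real on $\R$, and, from the normalizations of $f_T$ and $\widetilde f_T$ at $\infty$, satisfies $\phi(z)=z+c+o(1)$ there for a real constant $c$.

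Because $\phi$ is real on $\R$ and $\eta_T$ meets $\R$ only at its root, the reflection $\Phi:=\phi$ on $\overline\H$ and $\Phi(z):=\overline{\phi(\bar z)}$ on the closed lower half-plane defines a homeomorphism $\Phi$ of $\widehat{\C}$ that is conformal off $\eta_T\cup\eta_T^{*}$, where $\eta_T^{*}$ is the reflection of $\eta_T$ across $\R$ (Schwarz reflection gives conformality across $\R$ away from the root, and $\infty$ is a removable point singularity since $\phi$ is asymptotically a translation there). Now I invoke the removability input: for $\kappa<4$ the set $\eta_T\cup\eta_T^{*}$ is almost surely conformally removable. Indeed, the estimates of Rohde--Schramm show that $\widehat{\C}\setminus(\eta_T\cup\eta_T^{*})$ --- obtained by welding the a.s.\ H\"older domain $\H\setminus\eta_T$ to its mirror image along $\R$ minus the root --- is a.s.\ a H\"older domain, and by Jones--Smirnov the boundary of a H\"older domain is conformally removable. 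Consequently $\Phi$ is conformal on all of $\widehat{\C}$, i.e.\ a M\"obius transformation; it commutes with complex conjugation, preserves $\H$, and is asymptotic to $z\mapsto z+c$ at $\infty$, so $\Phi(z)=z+c$ with $c$ real, and since the welding intervals --- hence the roots of $\eta_T$ and $\widetilde\eta_T$ --- agree, $c=0$, i.e.\ $\Phi=\mathrm{id}$ and $\widetilde\eta_T=\eta_T$. Finally, the last sentence of the theorem is immediate: $h$ determines the boundary measure $\nu_h$, hence the map $R=R_h$, and by Theorem~\ref{conformalwelding} this $R$ is precisely the welding homeomorphism of $\eta_T$, so the uniqueness just established shows $h$ determines $\eta_T$ almost surely.

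The main obstacle is the conformal removability statement for $\kappa<4$: this is exactly where the restriction $\kappa<4$ enters, and it rests on H\"older regularity of the $\mathrm{SLE}$ uniformizing maps (Rohde--Schramm) together with the Jones--Smirnov removability theorem for H\"older domains; some care is needed in passing from ``the complement of $\eta_T$ in $\H$ is H\"older'' to ``$\eta_T\cup\eta_T^{*}$ is removable'', i.e.\ in controlling the behavior near $\R$ and near the root. A secondary, more routine point is the boundary bookkeeping: justifying the continuous single-valued extension of $\phi$ across $\eta_T$ via Carath\'eodory's theorem (using that $\eta_T$ is a simple curve), and checking that ``same welding $R$'' is used in a form strong enough --- with the welding intervals specified --- to pin down the normalization and conclude $\Phi=\mathrm{id}$ rather than merely that $\Phi$ is M\"obius.
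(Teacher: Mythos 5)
Your proposal is correct and follows essentially the same route as the paper: form $\phi=\widetilde f_T\circ f_T^{-1}$, use the common welding $R$ to extend it continuously across $\eta_T$, Schwarz-reflect, and conclude via conformal removability of $\eta_T\cup\bar\eta_T$, which the paper likewise obtains from the Rohde--Schramm H\"older-domain property for $\kappa<4$ combined with the Jones--Smirnov removability theorem. The only difference is that you spell out the normalization bookkeeping (ruling out a nontrivial translation) that the paper leaves implicit.
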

\begin{proof} The author learned from Smirnov that Theorem \ref{conformalweldingunique} follows almost immediately from Theorem \ref{conformalwelding} together with known results in the literature.  If there were a distinct candidate $\widetilde \eta_T$ with a corresponding $\widetilde f_T$, then $\phi = \widetilde f_T \circ f_T^{-1}$ --- extended from $\H$ to $\R$ by continuity, and to all of $\C$ by Schwarz reflection --- would be a non-trivial homeomorphism of $\C$ (with $\lim_{z \to \infty} \phi(z) -z = 0$) which was conformal on $\C \setminus (\eta_T \cup \bar \eta_T)$, where $\bar \eta_T$ denotes the complex conjugate of $\eta_T$.  Thus, to prove Theorem \ref{conformalweldingunique}, it suffices to show that no such map exists.  In complex analysis terminology, this is equivalent by definition to showing that the curve $\eta_T \cup \bar \eta_T$ is {\em removable}.
Rohde and Schramm showed that the complement of $\eta([0,T])$ is a.s.\ a H\"older domain for $\kappa < 4$ (see Theorem 5.2 of \cite{MR2153402}) and that $\eta$ is a.s.\ a simple curve in this setting.  In particular, $\eta_T \cup \bar \eta_T$ is almost surely the boundary of its complement, and this complement is a H\"older domain.  (More about H\"older continuity appears in work of Beliaev and Smirnov \cite{MR2525631} and Kang \cite{MR2257400} and Lind \cite{MR2386236}.)  Jones and Smirnov showed generally that boundaries of H\"older domains are removable (Corollary 2 of \cite{MR1785402}).  The same observations are used in \cite{2009arXiv0909.1003A}. \end{proof}

We remark that the above arguments also show that $\eta \cup \bar \eta$ is removable when $\eta$ is the entire SLE path.  In the coming sections, we will often interpret the left and right components of $\H \setminus \eta$ as distinct quantum surfaces, where the right boundary arc of one surface is welded (along $\eta$) to the left boundary arc of another surface in a quantum-boundary-length-preserving way.  When the law of $\eta$ is given by SLE$_\kappa$ with $\kappa < 4$, removability implies that $\eta$ is almost surely determined (up to a constant rescaling of $\H$) by the way that these boundary arcs are identified.  In other words, aside from constant rescalings, there is no homeomorphism of $\H$, fixing $0$ and $\infty$, whose restriction to $\H \setminus \eta$ is conformal.

\subsection{Corollary: capacity stationary quantum zipper} \label{ss.capstationary}

This subsection contains some discussion and interpretation of some simple consequences of Theorems \ref{conformalwelding} and \ref{conformalweldingunique}, in particular Corollary \ref{stationaryzipping} below.  We first observe that for $\kappa < 4$, Theorem \ref{conformalweldingunique} implies that $R$ determines $\eta_T$ almost surely for any given $T > 0$.  In particular, this means that $R$ determines an entire reverse Loewner evolution $f_t = f^h_t$ for all $t \geq 0$, and that this $f^h_t$ is (in law) a {\em reverse} SLE$_\kappa$ flow.    Similarly, given a chordal curve $\eta$ from $0$ to $\infty$ in $\H$, we denote by $f^\eta_t$ the {\em forward} Loewner flow corresponding to $\eta$.  The following is now an immediate corollary of the domain Markov property for SLE and Theorems \ref{reversecoupling}, \ref{conformalwelding} and \ref{conformalweldingunique}.  As usual, transformations $f(D,h)$ are defined using \eqref{Qmap}.

\begin{corollary} \label{stationaryzipping}
Fix $\kappa \in (0,4)$.  Let $h = \h_0 + \widetilde h$ be as in Theorem \ref{reversecoupling} and let $\eta$ be an SLE$_\kappa$ on $\H$ chosen independently of $h$.  Let $D_1$ be the left component of $\H \setminus \eta$ and $h^{D_1}$ the restriction of $h$ to $D_1$.    Let $D_2$ be the right component of $\H \setminus \eta$ and $h^{D_2}$ the restriction of $h$ to $D_2$.  For $t \geq 0$, write
\begin{eqnarray*}
\zcap_t \bigl((D_1, h^{D_1}), (D_2, h^{D_2})\bigr) &=& \bigl(f^h_t(D_1, h^{D_1}), f^h_t(D_2, h^{D_2})\bigr),\\
\zcap_{-t} \bigl((D_1, h^{D_1}), (D_2, h^{D_2})\bigr) &=& \bigl(f^\eta_t(D_1, h^{D_1}), f^\eta_t(D_2, h^{D_2})\bigr). \end{eqnarray*}
Note that both $h$ and $\eta$ are determined by the pair $\bigl((D_1, h^{D_1}), (D_2, h^{D_2})\bigr)$, and that $f_t^h$ and $f^\eta_t$ are also a.s.\ determined by this pair, so that 
the maps $\zcap_t$ and $\zcap_{-t}$ are well defined for almost all pairs $\bigl((D_1, h^{D_1}), (D_2, h^{D_2})\bigr)$ chosen in the manner described above.
Then the law of $\bigl((D_1, h^{D_1}), (D_2, h^{D_2})\bigr)$ is invariant under $\zcap_t$ for all $t$.  Also, for all $s$ and $t$, $$\zcap_{s+t} = \zcap_s \zcap_t$$ almost surely.
\end{corollary}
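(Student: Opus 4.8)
The plan is to recast the corollary as the assertion that an $\R$-indexed family $(\zcap_t)_{t\in\R}$ of transformations on configurations $((D_1,h^{D_1}),(D_2,h^{D_2}))$ is a measure-preserving flow, and then to read this off from Theorems \ref{reversecoupling}, \ref{conformalwelding} and \ref{conformalweldingunique} together with the domain Markov property of SLE. First I would record the two pieces of data carried by a configuration: the interface $\eta=\H\setminus(D_1\cup D_2)$, and the field $h$ on $\H$ reconstituted from $h^{D_1},h^{D_2}$ (a well defined distribution because, as in the discussion after Theorem \ref{conformalweldingunique}, $\eta\cup\bar\eta$ is removable). For $t\geq 0$, writing $\eta_t:=\H\setminus f^h_t(\H)$ for the simple curve grown by the reverse flow (with tip $f^h_t(0)$), one applies the coordinate change $f^h_t$ of \eqref{Qmap} to each of $(D_1,h^{D_1})$, $(D_2,h^{D_2})$ and observes that the images jointly exhaust $\H\setminus(\eta_t\cup f^h_t(\eta))$; thus $\zcap_t$ replaces $(h,\eta)$ by $(h^{(t)},\eta^{(t)})$ with $\eta^{(t)}=\eta_t\cup f^h_t(\eta)$ and $h^{(t)}$ the field on $\H$ whose restriction to $\H\setminus\eta_t$ is the pushforward of $h$ under $f^h_t$ (again a distribution on all of $\H$, since $\eta_t$ is removable---this is the point made about $\widetilde h\circ f_T$ after Theorem \ref{forwardcoupling}). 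Symmetrically, $\zcap_{-t}$ replaces it by $(h^{(-t)},\eta^{(-t)})$ with $\eta^{(-t)}=f^\eta_t(\eta|_{[t,\infty)})$ and $h^{(-t)}$ the pushforward of $h|_{\H\setminus\eta([0,t])}$ under $f^\eta_t$. Theorem \ref{conformalwelding} is what legitimizes the regluings: the quantum lengths on the two sides of the new interface still agree (along $f^h_t(\eta)$ because $f^h_t$ preserves quantum length, and along $\eta_t$ by the very construction of the reverse flow from $R_h$), so the reglued surface is again parameterized by $\H$.

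The heart of the matter is the $t\geq 0$ direction. For the field one wants $h^{(t)}$ to have the law $\h_0+\widetilde h$; the difficulty is that $f^h_t$ is \emph{determined by} $h$ (through $R_h$), whereas Theorem \ref{reversecoupling} supplies an \emph{independent} reverse-$\mathrm{SLE}_\kappa$ map. This is resolved by Theorem \ref{conformalweldingunique}: in the coupling of Theorem \ref{reversecoupling} the field $h\circ f_T+Q\log|f_T'|$ almost surely determines the cut curve $\eta_T$, hence $f_T$, so (field, independent reverse $\mathrm{SLE}$) has the same joint law as (field, its own welding-determined reverse flow). Writing $h=g\circ f_t+Q\log|f_t'|$ accordingly and transporting $h$ forward under $f_t=f^h_t$, the coordinate factors cancel and $h^{(t)}$ is seen to agree with $g$ off $\eta_t$, hence (by removability) to equal $g$, which has law $\h_0+\widetilde h$ by Theorem \ref{reversecoupling}. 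For the interface, one checks---by uniqueness of the conformal map $\H\to\H\setminus\eta_t$ fixing $0$ to the tip and $\infty$, with unit derivative at $\infty$---that $f^h_t$ is exactly the inverse forward Loewner map of $\eta_t$. Since $\eta_t$ is independent of $\eta$ and has the law of an $\mathrm{SLE}_\kappa$ segment of capacity $t$ (the reverse flow of $R_h$ being a reverse $\mathrm{SLE}_\kappa$, and reverse and forward $\mathrm{SLE}$ agreeing at fixed time), the domain Markov property makes $\eta^{(t)}=\eta_t\cup f^h_t(\eta)$ an $\mathrm{SLE}_\kappa$ from $0$ to $\infty$; and as $\eta$ is independent of $h$ and the conditional law of $\eta^{(t)}$ given $h$ does not depend on $h$, $\eta^{(t)}$ is independent of $h^{(t)}$, so $(h^{(t)},\eta^{(t)})$ has the law of $(h,\eta)$. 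The $t\leq 0$ case is the same but easier: $\eta([0,t])$ is by hypothesis independent of $h$ and has the law of a reverse-$\mathrm{SLE}_\kappa$ hull of time $t$, $h^{(-t)}$ is the reparameterization of $h$ with that hull removed---so Theorem \ref{reversecoupling} gives its law directly---and $\eta^{(-t)}$ is an independent $\mathrm{SLE}_\kappa$ by domain Markov.

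Finally, $\zcap_{s+t}=\zcap_s\zcap_t$ comes from the composition rules for Loewner chains: forward flows compose with forward flows and reverse flows with reverse flows (for the latter one uses that $(f^h_t)_{t\geq 0}$ is an honest reverse $\mathrm{SLE}_\kappa$ driven by a Brownian motion, guaranteed again by Theorems \ref{reversecoupling} and \ref{conformalweldingunique}), while for opposite signs one verifies the cancellations $\zcap_{-t}\zcap_t=\mathrm{id}$ (the forward flow of the enlarged interface $\eta^{(t)}$, run for time $t$, peels off exactly the newly grown segment $\eta_t$, which has capacity $t$) and $\zcap_t\zcap_{-t}=\mathrm{id}$ (here Theorem \ref{conformalweldingunique} enters once more: after $\eta([0,t])$ is removed the new field still determines that segment, so zipping back up restores it). I expect the main obstacle to be precisely this $t\geq 0$ analysis---coordinating the reverse-coupling statement for the field with the domain Markov property for the curve, and in particular checking that the curve $\eta_t$ that \emph{emerges} from $h$ under the reverse flow concatenates with the transported interface $f^h_t(\eta)$ into a genuine $\mathrm{SLE}_\kappa$---the rest amounting to careful bookkeeping of conformal-map normalizations, half-plane capacities, and the asymmetry between positive and negative times.
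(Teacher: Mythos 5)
Your proposal is correct and follows essentially the same route as the paper, which obtains the corollary directly from Theorems \ref{reversecoupling}, \ref{conformalwelding} and \ref{conformalweldingunique} (so that the welding-determined reverse flow $f^h_t$ is a.s.\ well defined and is in law a reverse SLE$_\kappa$ flow) together with the domain Markov property of SLE, exactly as you unpack it. One small slip worth fixing: the independence of $\eta^{(t)}$ from $h^{(t)}$ should be argued by conditioning on $h^{(t)}$ (given which the newly grown segment $\eta_t$ is a capacity-$t$ SLE$_\kappa$ segment independent of the field, and the continuation is then Markovian), not by saying the conditional law of $\eta^{(t)}$ given $h$ is independent of $h$ --- given $h$ that conditional law does depend on $h$, through $\eta_t$.
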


\begin {figure}[htbp]
\begin {center}
\includegraphics [width=2in]{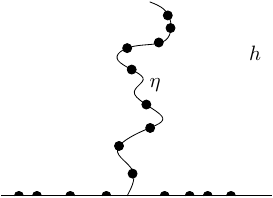}
\caption {\label{zipperfig} Sketch of $\eta$ with marks spaced at intervals of the same $\nu_h$ length along $\partial D_1$ and $\partial D_2$.  Here $(-\infty, 0]$ and $[0,\infty)$ are the two open strands of the ``zipper'' while $\eta$ is the closed (zipped up) strand.  Semicircular dots on $\R$ are ``zipped together'' by $f^h_t$. Circular dots on $\eta$ are ``pulled apart'' by $f^\eta_t$.  (Recall that under the reverse Loewner flow $f^h_t$, the center of a semicircle on the negative real axis will reach the origin at the same time as the center of the corresponding semicircle on the positive real axis.)  The law of $\bigl((D_1, h^{D_1}), (D_2, h^{D_2})\bigr)$ is invariant under ``zipping up'' by $t$ capacity units or ``zipping down'' by $t$ capacity units.}
\end {center}
\end {figure}

Because the forward and reverse Loewner evolutions are parameterized according to half plane capacity, we refer to the group of transformations $\zcap_t$ as the {\bf capacity quantum zipper}, see Figure \ref{zipperfig}.  (The term ``zipper'' in the Loewner evolution context has been used before; see the ``zipper algorithm'' for numerically computing conformal mappings in \cite{MR2361903} and the references therein.)  When $t>0$, applying $\zcap_t$ is called ``zipping up'' the pair of quantum surfaces by $t$ capacity units and applying $\zcap_{-t}$ is called ``zipping down'' or ``unzipping'' by $t$ capacity units.

\vspace{.1in}

To begin to put this construction in context, we recall that the general {\em conformal welding} problem is usually formulated in terms of identifying unit discs $\D_1$ and $\D_2$ along their boundaries via a given homeomorphism $\phi$ from $\partial \D_1$ to $\partial \D_2$ to create a sphere with a conformal structure.  Precisely, one wants a simple loop $\eta$ in the complex sphere, dividing the sphere into two pieces such that if conformal maps $\psi_i$ from the $\D_i$ to the two pieces are extended continuously to their boundaries, then $\psi_1 \circ \psi_2^{-1}$ is $\phi$.  In general, not every homeomorphism $\phi$ between disc boundaries is a conformal welding in this way, and when it is, it does not always come from an $\eta$ that is (modulo conformal automorphisms of the sphere) unique; in fact, arbitrarily small changes to $\phi$ can lead to large changes in $\eta$ and some fairly exotic behavior (see e.g.\ \cite{MR2373370}).

The theorems of this paper can also be formulated in terms of a sphere obtained by gluing two discs along their boundaries: in particular, one can zip up the quantum surfaces of Corollary \ref{stationaryzipping} ``all the way'' (see Figure \ref{reverseftnormalizedfig} and Section \ref{ss.capstataddconst}), which could be viewed as welding two Liouville quantum surfaces (each of which is topologically homeomorphic to a disc) to obtain an SLE loop in the sphere, together with an instance of the free boundary GFF on the sphere.

Note that in the construction described above, the quantum surfaces are defined only modulo an additive constant for the GFF, and we construct the two surfaces together in a particular way.  In Section \ref{ss.lengthstationary} (Theorem \ref{t.lengthstationary}), we will describe a related construction in which one takes two {\em independent} quantum surfaces (each with its additive constant well-defined) and welds them together to obtain SLE.

As mentioned in Section \ref{ss.overview}, Peter Jones conjectured several years ago that an SLE loop could be obtained by (what in our language amounts to) welding a quantum surface to a deterministic Euclidean disc.  (The author first learned of this conjecture during a private conversation with Jones in early 2007 \cite{privatejones}.)  Astala, Jones, Kupiainen, and Saksman recently showed that such a welding exists and determines a unique loop (up to conformal automorphism of the sphere) \cite{2009arXiv0909.1003A, MR2600118}. Binder and Smirnov recently announced (to the author, in private communication \cite{privatesmirnov}) that they have obtained a proof that the original conjecture of Jones is false. By computing a multifractal spectrum, they showed that the loop constructed in \cite{2009arXiv0909.1003A, MR2600118} does not look locally like SLE.  However, our construction, together with Theorem \ref{t.lengthstationary} below, shows that a natural variant of the Jones conjecture --- involving two independent quantum surfaces instead of one quantum surface and one Euclidean disc --- is in fact true.

We also remark that the ``natural'' $d$-dimensional measure on (or parameterization of) an SLE curve of Hausdorff dimension $d$ was only constructed fairly recently \cite{2009arXiv0906.3804L, 2010arXiv1006.4936L, lawler2012minkowski}, and it was shown to be uniquely characterized by certain symmetries, in particular the requirement that it transforms like a $d$-dimensional measure under the maps $f_t$ (i.e., if the map locally stretches space by a factor of $r$, then it locally increases the measure by a factor of $r^d$).  Our construction here can be viewed as describing, for $\kappa < 4$, a natural ``quantum'' parameterization of SLE$_\kappa$, which is similarly characterized by transformation laws, in particular the requirement that adding $C$ to $h$ --- which scales area by a factor of $e^{\gamma C}$ --- scales length by a factor of $e^{\gamma C/2}$.  These ideas are discussed further in \cite{DS3}.

The relationship between Euclidean and quantum natural fractal measures and their evolution under capacity invariant quantum zipping is developed in \cite{DS3} in a way that makes use of the KPZ formula \cite{MR947880, 2008arXiv0808.1560D}.

\subsection{Quantum wedges and quantum length stationarity} \label{ss.lengthstationary}

This subsection contains ideas and definitions that are important for the proofs of Theorem \ref{conformalwelding} and \ref{conformalweldingunique}, as well as the statement of another of this paper's main results, Theorem \ref{t.lengthstationary}, which we will actually prove before Theorem \ref{conformalwelding}.  The reader who prefers to first see proofs of Theorems \ref{forwardcoupling} and \ref{reversecoupling} and some discussion of the consequences may read Sections \ref{GFFoverview} and \ref{couplingsection}, as well as much of Section \ref{s.zipper}, independently of this subsection.

Theorem \ref{t.lengthstationary} includes a variant of Corollary \ref{stationaryzipping} in which one parameterizes time by ``amount of quantum length zipped up'' instead of by capacity.  The ``stationary'' picture will be described as a particular random quantum surface $\mathcal S$ with two marked boundary points and a chordal SLE $\eta$ connecting the two marked points.  The theorem will state that this $\eta$ divides $\mathcal S$ into two quantum surfaces $\mathcal S_1$ and $\mathcal S_2$ that are {\em independent} of each other.  (One can also reverse the procedure and first choose the $\mathcal S_i$ --- these are the so-called $\gamma$-quantum wedges mentioned earlier --- and then weld them together to produce $\mathcal S$ and the interface $\eta$.)  As we have already mentioned, this independence appears at first glance to be a rather bizarre coincidence.  However, as we will see in Section \ref{ss.discretelimitdiscussion}, this kind of result is to be expected if SLE-decorated Liouville quantum gravity is (as conjectured) the scaling limit of path-decorated random planar maps.

Before we state Theorem \ref{t.lengthstationary} formally, we will need to spend a few paragraphs constructing a particular kind of scale invariant random quantum surface that we will call an ``$\alpha$ quantum wedge.''  The reader who has never encountered quantum wedges before before may wish to first read Section 1.4 of \cite{wedgespaper}, which contains a more recent and better illustrated discussion of the quantum wedge construction.

We begin this construction by making a few general remarks. Recall that given any quantum surface represented by $(\widetilde D, \widetilde h)$ --- with two distinguished boundary points --- we can change coordinates via \eqref{Qmap} and represent it as the pair $(\H, h)$ for some $h$, where $\H$ is the upper half plane, and the two marked points are taken to be $0$ and $\infty$.  We will represent the ``quantum wedges'' we construct in this way, and we will focus on constructions in which there is almost surely a finite amount of $\mu_h$ mass and $\nu_h$ mass in each bounded neighborhood of $0$ and an infinite amount in each neighborhood of $\infty$.  In this case, the corresponding quantum surface is {\bf half-plane-like} in the sense that it has one distinguished boundary point ``at infinity'' and one distinguished ``origin'' --- and each neighborhood of ``infinity'' includes infinite area and an infinite length portion of the surface boundary, while the complement of such a neighborhood contains only finite area and a finite-length portion of the surface boundary.  We will let $\mathcal S_h$ denote the doubly marked quantum surface described by $h$ in this way.

The $h$ describing $\mathcal S_h$ is canonical except that we still have one free parameter corresponding to constant rescalings of $\H$ by \eqref{Qmap}.  For each $a > 0$, such a rescaling is given by
 \begin{equation} \label{Qmaprescaling} (\H,h) \to (\H , h (a \cdot) + Q \log |a|). \end{equation}
We can fix this parameter by requiring that $\mu_h \bigl(B_1(0) \cap \H \bigr) = 1$.  We will let $\mu_h$ be zero on the negative half plane so that we write this slightly more compactly as $\mu_h \bigl(B_1(0) \bigr) = 1$. (Alternatively, one could normalize so that $\nu_h\bigl([-1,1]\bigr) = 1$.)  We call the $h$ for which this holds the {\bf canonical description} of the doubly marked quantum surface.

Now to construct a ``quantum wedge'' it will suffice to give the law of the corresponding $h$.  To this end, we first recall that one can decompose the Hilbert space for the free boundary GFF into an orthogonal sum of the space of functions which are radially symmetric about zero and the space of functions with zero mean about all circles centered at zero  \cite{2008arXiv0808.1560D}.  Consequently, we can write $h(\cdot) = h_{|\cdot|}(0) + h^\dagger(\cdot)$, where $h^\dagger_\eps(0) = 0$ for all $\eps$, and $h_{|z|}(0)$ is (of course) a continuous and radially symmetric function of $z$.  This is a decomposition of the GFF $h$ into its projection onto two $(\cdot, \cdot)_\nabla$ orthogonal subspaces, so $h_{|\cdot|}(0)$ and $h^\dagger(\cdot)$ are independent of each other \cite{Sh}; the latter is a scale invariant random distribution and defined without an additive constant (since its mean is set to be zero on all circles centered at the origin).  Now we define three types of quantum surfaces (the first two being defined only up an additive constant for $h$, which corresponds to a constant-factor rescaling of the surface itself).  The third may seem unmotivated; however, the reader may note that it is similar in the spirit to the second, except that the third $h$ is actually a well defined random distribution (as opposed to a random distribution modulo additive constant), so that $(\H, h)$ is a well-defined quantum surface.

\begin{enumerate}
\item {\bf Definition --- unscaled quantum wedge on $\H$:} the quantum surface $(\H, h)$ where $h$ is an instance of the free boundary GFF (which is defined up to additive constant, so that the quantum surface is defined only up to rescaling).  In this case, $h_{|\cdot|}$ agrees in law with $B_{-\log|\cdot|}$ when $B_t, \,\,\, t \in \R$ is $\sqrt{2}$ times a standard Brownian motion defined up to a global additive constant).  We think of $B_t$ as a Brownian motion with diffusive rate $2$, which will be understood throughout the discussion below.  We can write $$h = h^\dagger(\cdot) +  B_{-\log|\cdot|},$$ where $h^\dagger(\cdot)$ and $B_{-\log|\cdot|}$ are independent.
\item {\bf Definition --- $\alpha$-log-singular free quantum surface on $\H$:} the quantum surface $(\H,h)$ where \begin{equation} \label{eqn::freesurface} h = h^\dagger(\cdot) + \alpha\bigl(- \log |\cdot|\bigr)+  B_{-\log|\cdot|} ,\end{equation} with $h^\dagger$ and $B$ as above (and $h$ also defined only up to additive constant).
\item {\bf Definition --- $\alpha$-quantum wedge:} for $\alpha < 0$, the quantum surface $(\H, h)$ where \begin{equation}
\label{eqn::hwedge} h = h^\dagger(\cdot) +  Q \bigl(- \log|\cdot| \bigr) + A_{-\log|\cdot|}, \end{equation} and the process $A_t, \,\,\, t \in \R$ is defined in a particular way: namely, for $t \geq 0$,  $A_t$ is a Brownian motion with drift $\alpha - Q$, i.e., $A_t = B_t + (\alpha - Q)t$.  Also, for $t \geq 0$, the negative-time process $A_{-t}$ is chosen independently as a Brownian motion with drift $-(\alpha - Q)$ {\em conditioned} not to revisit zero.  This involves conditioning on a probability zero event, so let us state this another way to be clear.  Note that $\tilde B_t = B_t -(\alpha-Q)t$ has positive drift and hence a.s.\ $s_0 = \sup \{s : \tilde B_s = 0 \} < \infty$.  Then the law of $A_{-t}$ (for $t \geq 0$) is the law of $\tilde B_{t+s_0}$, for $t \geq 0$.
\end{enumerate}

To begin to motivate the definition above, note that applying the coordinate transformation \eqref{Qmaprescaling} to the $\alpha$-quantum wedge defined by \eqref{eqn::hwedge}, where the coordinate change map is a rescaling by a factor of $a$, amounts to replacing \eqref{eqn::hwedge} with $$h^\dagger(a \cdot) +  Q \bigl(- \log|a \cdot| \bigr) + A_{-\log|a \cdot|} + Q \log|a|=  h^\dagger(a \cdot) +  Q \bigl(- \log|\cdot| \bigr) + A_{\log a -\log|\cdot|} .$$
Since the law of $h^\dagger$ is scaling invariant, we find that the coordinate change described amounts to a horizontal translation of $A$ by $- \log a$.  That is, the quantum surface obtained by sampling $A$ and then sampling $h^\dagger$ independently agrees in law with the quantum surface obtained by sampling $A$, translating the graph of $A$ horizontally by some (possibly random) amount, and then sampling $h^\dagger$ independently.

We think of $A_t$ as a Brownian process that drifts steadily as a Brownian motion with drift $(\alpha - Q)$ from $-\infty$, reaches zero at some point, and then subsequently evolves as a regular Brownian motion with the same drift.  Since translating the graph of $A_t$ horizontally doesn't affect the law of the quantum surface obtained, we choose (for concreteness) the translation for which $\inf \{t: A_t = 0 \} = 0$.   (We remark that the process $A_t$ can also be interpreted as the log of a Bessel process, reparameterized by quadratic variation, noting that the graph of such a reparameterization is {\em a priori} only defined up to a horizontal translation; this point of view is explained and used extensively in \cite{wedgespaper}.)

    Now we make another simple claim: the $\alpha$-quantum wedge is a doubly marked quantum surface whose law is invariant under the multiplication of its area by a constant.
To explain what this means, let us observe that when $C \in \R$, we can ``multiply the surface area by the constant $e^C$'' by replacing $h$ with $h+C/\gamma$, or equivalently, by replacing $A$ with $A + C/\gamma$.  Let $t_0 = \inf \{t: \tilde A_t = 0 \}$ and write $\tilde A_t = A_{t_0 +t} + C/\gamma$.  By the definition of $t_0$, we find that $\tilde A_t$ (like $A_t$) is a process that drifts up from $-\infty$, reaches zero for the first time when $t=0$, and then subsequently evolves as a Brownian motion with drift.  Indeed, it is not hard to see that $\tilde A_t$ has the same law as $A_t$. To deduce the claim, we then observe that the distribution of $h^{\dagger}$ is fixed; and since the radial parts $h_{|\cdot|}(0)$ of the GFF are continuous and independent of $\mu_{h^{\dagger}}$ and converge to a limit in law, we may conclude that $e^{\gamma h_{|\cdot|}(0)} d \mu_{h^{\dagger}}$ converges in law.

For future reference, we mention that one has a natural notion of ``convergence'' for quantum surfaces of this type: if $h^1, h^2, \ldots$ are the canonical descriptions of a sequence of doubly marked quantum surfaces and $h$ is the canonical description of $\mathcal S_h$, then we say that the sequence $\mathcal S_{h^i}$ {\bf converges} to $\mathcal S_h$ if the corresponding measures $\mu_{h^i}$ converge weakly to $\mu_h$ on all bounded subsets of $\H$.

One motivation for the definition of a quantum wedge is the following, which can be deduced from the description of quantum typical points given in Section 6 of \cite{2008arXiv0808.1560D}.  It says (in a certain special setting; for a stronger result, see Proposition \ref{p.gammawedge2}) that if one zooms in near a ``quantum-boundary-measure-typical'' point, one finds that the quantum surface looks like a $\gamma$-quantum wedge near that point.

\begin{proposition} \label{p.gammawedge}
Fix $\gamma \in [0,2)$ and let $D$ be a bounded subdomain of $\H$ for which $\partial D \cap \R$ is a segment of positive length.  Let $\widetilde h$ be an instance of the GFF with zero boundary conditions on $\partial D \setminus \R$ and free boundary conditions on $\partial D \cap \R$.  Let $[a,b]$ be any sub-interval of $\partial D \cap \R$ and let $\h_0$ be a continuous function on $D$ that extends continuously to the interval $(a,b)$.
\begin {figure}[htbp]
\begin {center}
\includegraphics [height=.75in]{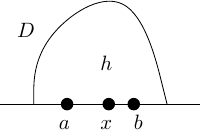}
\caption {\label{boundaryx} Point $x$ sampled from $\nu_h$ (restricted to $[a,b]$).}
\end {center}
\end {figure}
Let $dh$ be the law of $\h_0 + \widetilde h$, and let $\nu_h[a,b]dh$ denote the measure whose Radon-Nikodym derivative w.r.t.\ $dh$ is $\nu_h[a,b]$.  (Assume that this is a finite measure --- i.e., the $dh$ expectation of $\nu_h[a,b]$ is finite.)  Now suppose we \begin{enumerate}
\item sample $h$ from $\nu_h[a,b] dh$ (normalized to be a probability measure),
\item then sample $x$ uniformly from $\nu_h$ restricted to $[a,b]$ (normalized to be a probability measure),
\item and then let $h^*$ be $h$ translated by $-x$ units horizontally (i.e., recentered so that $x$ becomes the origin).
\end{enumerate}
Then as $C \to \infty$ the random quantum surfaces $\mathcal S_{h^* + C/\gamma}$ converge in law (w.r.t.\ the topology of convergence of doubly-marked quantum surfaces) to a $\gamma$-quantum wedge.
\end{proposition}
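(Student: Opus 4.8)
The plan is to treat this as a rooted-measure (Palm) computation combined with a Girsanov change of measure and a ``zoom-in'' argument, following \cite{2008arXiv0808.1560D}. First I would rewrite the two-step procedure --- sample $h$ from $\nu_h[a,b]\,dh$, then $x$ from $\nu_h|_{[a,b]}$ --- using the rooting identity for the boundary Liouville measure proved there: for bounded $F$ and $\phi \geq 0$ on $[a,b]$, $\E\big[\int \phi(x)\,\nu_h(dx)\,F(h)\big] = \int \phi(x)\,\E\big[F(\h_0 + \widetilde h + \mathfrak g_x)\big]\,dx$, where on the right $\widetilde h$ still has its original (mixed) boundary law and $\mathfrak g_x = \lim_{\eps\to 0}\tfrac{\gamma}{2}\Cov\big(h_\eps(x), h(\cdot)\big)$. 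The point is that for fixed $\eps$ the weighting of $dh$ by $\eps^{\gamma^2/4} e^{\gamma h_\eps(x)/2}$ is a Gaussian exponential, so by Cameron--Martin it shifts $h$ by exactly $\tfrac\gamma2 \Cov(h_\eps(x), \cdot)$, the prefactor $\eps^{\gamma^2/4}$ being chosen so that the weighting has expectation of order one; letting $\eps \to 0$ produces $\mathfrak g_x$. Near $x$ this $\mathfrak g_x$ has the logarithmic singularity $-\gamma\log|\cdot - x|$ --- the bookkeeping of the boundary circle-average variance, and the fact that the constant is exactly $\gamma$, is carried out in \cite{2008arXiv0808.1560D} --- and is otherwise continuous, indeed harmonic off $\R$. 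Hence, after recentering the sampled field so that $x$ becomes the origin, the field $h^*$ has, near $0$, the form $\widehat h - \gamma\log|\cdot| + \mathfrak f$, where $\widehat h$ is a GFF on the recentered domain $D - x$ (free boundary along the image of $(a,b)\subset\R$, which is a neighborhood of $0$) and $\mathfrak f$ is continuous near $0$ (it collects the continuous part of $\mathfrak g_x$ and the recentered $\h_0$).

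Second, I would compare this with the construction of the $\gamma$-quantum wedge recalled just above: the $C \to \infty$ limit of $\mathcal S_{\widetilde h_{\H} - \gamma\log|\cdot| + C/\gamma}$ with $\widetilde h_{\H}$ a free boundary GFF on $\H$. The key remark is that applying the canonical-description rescaling \eqref{Qmaprescaling} to $\mathcal S_{h^* + C/\gamma}$ maps a Euclidean semidisc $B_{\eps_C}(0)\cap\H$ of $\mu_{h^*}$-mass $e^{-C}$ onto $B_1(0)\cap\H$, and $\eps_C \to 0$ almost surely as $C \to \infty$ since $\mu_{h^*}$ is a.s.\ a finite, atomless measure of full support near $0$; hence the limiting surface depends only on the behavior of $h^*$ in arbitrarily small neighborhoods of $0$. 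There, (i) by the Markov property of the GFF, $\widehat h$ restricted to a small semidisc at $0$ equals a GFF on that semidisc (free boundary on the flat part, zero boundary on the curved part) plus a random harmonic function, and such a GFF, as the semidisc is blown up, rescales to a free boundary GFF on $\H$; (ii) that random harmonic function, together with $\mathfrak f$, becomes asymptotically constant on the shrinking window, and adding a function that is approximately equal to a constant $c$ to $h^*$ multiplies $\mu_{h^*}$ by approximately $e^{\gamma c}$, a global factor undone by the normalization $\mu(B_1(0)) = 1$. So $\mathcal S_{h^* + C/\gamma}$ and $\mathcal S_{\widetilde h_{\H} - \gamma\log|\cdot| + C/\gamma}$ have the same $C\to\infty$ limit in law, namely the $\gamma$-quantum wedge. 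Equivalently, in the decomposition $h^* = (h^*)_{|\cdot|}(0) + (h^*)^\dagger$ into radial and scale-invariant lateral parts, the process $-\log \mu_{h^*}(B_{e^{-s}}(0))$ recentered at height $C$ converges (as in the discussion around Figure \ref{logplots2}) to the process $A_s$ of the explicit construction, $(h^*)^\dagger$ converges to $h^\dagger$, and the two are independent.

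The main obstacle is the uniformity needed to turn the two heuristics of the previous paragraph into genuine weak convergence of the canonically normalized area measures on all bounded subsets of $\H$: one must control, uniformly in $C$ and with enough integrability in the randomness --- including the random recentering point $x$ and the random zoom scale $\eps_C$ --- that the continuous and harmonic correction terms are negligible after rescaling, and that the rescaled GFF and the rescaled radial process are tight and converge. This rests on the quantitative inputs of \cite{2008arXiv0808.1560D}: existence and moment bounds for the regularized measures $\mu_h$ and $\nu_h$, the a.s.\ scaling/H\"older behavior of $s \mapsto h_{e^{-s}}(0)$ and of $-\log\mu(B_{e^{-s}}(0))$, and the rooting identity for $\nu_h$ itself. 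Modulo these inputs, the argument is the rooting-plus-zoom-in scheme above.
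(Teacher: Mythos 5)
Your proposal is correct and follows essentially the same route as the paper, which simply invokes the explicit description from \cite{2008arXiv0808.1560D} of the joint law of $(x,h)$ --- the marginal of $x$ is absolutely continuous with respect to Lebesgue measure, and conditioned on $x$ the field is its original law plus $-\gamma\log|x-\cdot|$ plus a smooth function --- and observes that this immediately gives the $\gamma$-quantum wedge limit after recentering and zooming in. Your rooting/Cameron--Martin derivation of that description (modulo the density factor in the marginal of $x$, which you acknowledge and which is harmless here) and your zoom-in/normalization step are exactly the content the paper delegates to \cite{2008arXiv0808.1560D}, so there is nothing to add.
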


\begin{proof}
We first recall that in this setting the description of quantum typical points in Section 6 of \cite{2008arXiv0808.1560D} implies a very explicit description of the joint law of the pair $x$ and $h$ sampled in Proposition \ref{p.gammawedge}.  The marginal law of $x$ is absolutely continuous with respect to Lebesgue measure, and conditioned on $x$ the law of $h$ is that of its original law {\em plus} a deterministic function that has the form $-\gamma \log|x - \cdot|$ plus a deterministic smooth function.  In a small neighborhood of $x$, this deterministic smooth function is approximately constant, which means that $h^*$ looks like (up to additive constant) the $h$ used to define an $\alpha$-$\log$-singular free quantum surface in \eqref{eqn::freesurface}, with $\alpha = \gamma$.  If we write $A_t' = B_t + (\alpha - Q)t$, then we find that $h^*$ looks like the $h$ used to define a $\gamma$-quantum wedge in \eqref{eqn::hwedge}, except with $A$ replaced by $A'$.

Now replacing $h^*$ by $h^* + C/\gamma$ corresponds to adding $C/\gamma$ to the process $B$ from \eqref{eqn::freesurface}, and hence also corresponds to adding $C/\gamma$ to the process $A'$, which translates the graph of $A'$ vertically. Recall from above that translating the graph of $A'$ horizontally corresponds to a coordinate change; so we can translate $A'$ so that it hits zero for the first time at the origin. It is not hard to see that as $C \to \infty$, the law of $A'$ thus translated converges to the law of $A$. Since the law of $h^\dagger$ is scale invariant and can be chosen independently, this implies the proposition statement.
\end{proof}

      We will later show (see Proposition \ref{p.gammawedge2}) that the conclusion of the proposition still holds if (when generating $x$ and $h$) we condition on particular values for $\nu_h[a,x]$ and $\nu_h[x,b]$.

The following is an immediate consequence of Proposition \ref{p.gammawedge}.  It tells us that the $\gamma$-quantum wedge is stationary with respect to shifting the origin by a given amount of quantum length.  (When $\gamma = 0$, the proposition simply states that $\H$ itself is invariant under horizontal translations.  Proposition \ref{p.gammawedgeinvariant} is the general quantum analog of this invariance.)

\begin{proposition} \label{p.gammawedgeinvariant}
Fix a constant $L > 0$.  Suppose that $(\H, h)$ is a $\gamma$-quantum wedge.  Then choose $y > 0$ so that $\nu_h[0,y] = L$, and let $h^*$ be $h$ translated by $-y$ units horizontally (i.e., recentered so that $y$ becomes the origin).  Then $(\H, h^*)$ is a $\gamma$-quantum wedge.
\end{proposition}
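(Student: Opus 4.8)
The plan is to deduce the proposition from Proposition \ref{p.gammawedge}, which exhibits the $\gamma$-quantum wedge as the $C\to\infty$ scaling limit of surfaces rooted at a $\nu$-typical boundary point, together with the observation that the point lying a fixed quantum length along the boundary from a $\nu$-typical point is, in an appropriate asymptotic sense, itself $\nu$-typical. Write $\mathrm{Reroot}_L$ for the operation on doubly-marked quantum surfaces that keeps the marked point at $\infty$ fixed and moves the marked origin to the point at quantum boundary length $L$ from it in the positive direction. Since the surface described by $h^*$ is the surface described by $h$ with its origin moved to the image of $x$ (where $\nu_h[0,x]=L$), the proposition is precisely the statement that $\mathrm{Reroot}_L$ preserves the law of the $\gamma$-quantum wedge, and this is what I would prove.

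Fix notation as in Proposition \ref{p.gammawedge}: let $h=\h_0+\widetilde h$ on a bounded domain $D$, with law weighted by $\nu_h[a,b]$, let $x$ be sampled from $\nu_h$ restricted to $[a,b]$, and recall that $\mathcal S_{h(\cdot+x)+C/\gamma}\to$ the $\gamma$-quantum wedge in law as $C\to\infty$. For $\ell>0$ let $x_\ell$ be the point of $\partial D\cap\R$ with $\nu_h[x,x_\ell]=\ell$. The crucial bookkeeping is the scaling relation that multiplying area by $e^{C}$ multiplies boundary lengths by $e^{C/2}$: consequently the point at quantum length $L$ of $\mathcal S_{h(\cdot+x)+C/\gamma}$ is (the image of) $x_{\ell_C}$ with $\ell_C:=Le^{-C/2}$, so
$$\mathrm{Reroot}_L\bigl(\mathcal S_{h(\cdot+x)+C/\gamma}\bigr)=\mathcal S_{h(\cdot+x_{\ell_C})+C/\gamma}.$$
As $\nu_h$ is a.s.\ atomless and finite on bounded subsets of $\R$, we have $x_{\ell_C}\to x$ when $C\to\infty$.

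I would then show that $\mathcal S_{h(\cdot+x_{\ell_C})+C/\gamma}$ also converges in law to the $\gamma$-quantum wedge. Conditionally on $h$, the map carrying a boundary point to the point a fixed quantum length $\ell$ to its right is a $\nu_h$-measure-preserving homeomorphism of the relevant portion of $\partial D\cap\R$; hence, apart from the event that $x_\ell$ would fall outside $[a,b]$ (which has probability tending to $0$ as $\ell\to0$), the pair $(h,x_\ell)$ has the same law as the pair consisting of $h$ and a $\nu_h$-sample from a slightly shrunken arc, as in Proposition \ref{p.gammawedge}, up to an $O(\ell)$ change in the weighting of the law of $h$. Since the conclusion of Proposition \ref{p.gammawedge} is insensitive to the choice of boundary arc and of the continuous background function, and is stable under such a vanishing reweighting, letting $C\to\infty$ (so that $\ell=\ell_C\to0$ simultaneously) gives $\mathcal S_{h(\cdot+x_{\ell_C})+C/\gamma}\to$ the $\gamma$-quantum wedge. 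Finally, since $\mathrm{Reroot}_L$ is continuous with respect to the topology on doubly-marked quantum surfaces — the point at quantum length $L$ depending continuously on the surface, which uses weak convergence of the boundary length measures — I would conclude
$$\mathrm{Reroot}_L\bigl(\text{$\gamma$-quantum wedge}\bigr)=\lim_{C\to\infty}\mathrm{Reroot}_L\bigl(\mathcal S_{h(\cdot+x)+C/\gamma}\bigr)=\lim_{C\to\infty}\mathcal S_{h(\cdot+x_{\ell_C})+C/\gamma}=\text{$\gamma$-quantum wedge}.$$

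I expect the main obstacle to be the third step: verifying that the simultaneous effects of the vanishing quantum-length offset $\ell_C\to0$ — the accompanying shift of the boundary arc, the $O(\ell_C)$ change of weighting, and the vanishing-probability event that the shifted point escapes the arc — genuinely leave the $C\to\infty$ limit unchanged. Conditioning on the values $\nu_h[a,x]$ and $\nu_h[x,b]$, as in the forthcoming Proposition \ref{p.gammawedge2}, forces $x_\ell$ to stay in the arc and removes the escape event outright, which is presumably why that refinement is recorded; absorbing the remaining arc-shift and reweighting should then be routine. The continuity of $\mathrm{Reroot}_L$ is a secondary, more standard point, contingent on the relevant surface convergence entailing convergence of boundary length measures.
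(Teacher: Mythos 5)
Your proposal is correct and takes essentially the same route as the paper, which simply declares the proposition an immediate consequence of Proposition \ref{p.gammawedge}: rerooting at quantum length $L$ on the rescaled surface $\mathcal S_{h(\cdot+x)+C/\gamma}$ amounts to shifting the marked boundary point by the vanishing quantum length $\ell_C = Le^{-C/2}$ on the unrescaled one, and since a fixed quantum-length shift is $\nu_h$-measure-preserving the $C\to\infty$ limit is the same wedge. The technical caveats you flag (the escape event, which the conditioning of Proposition \ref{p.gammawedge2} removes, and the compatibility of the rerooting operation with the limiting topology on doubly marked surfaces) are precisely the points the paper leaves implicit.
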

\begin{proof}
Suppose that $x$ is the point chosen uniformly from the quantum boundary measure in Proposition \ref{p.gammawedge}, and $x'$ is the point translated $\delta L$ quantum length units to the right from $x$, so that $\nu_h[x,x'] = \delta L$. Note that such an $x'$ exists with a probability that tends to $1$ as $\delta \to 0$, and that the law of $x'$ converges (in total variation sense) to the law of $x$ as $\delta \to 0$. In the rescaled surfaces in Proposition \ref{p.gammawedge}, boundary lengths are scaled by $e^{C/2}$, so if we set $\delta = e^{-C/2}$, then the distance between $x$ and $x'$ is $L$ after the rescaling.  Since this $\delta$ tends to zero as $C \to \infty$ we conclude that the limiting surface law is (as desired) invariant under the operation that translates the origin by $L$ units of quantum boundary length.\end{proof}


The following will be proved in Section \ref{s.zipper}.


\begin{theorem} \label{t.lengthstationary}
{\bf Wedge decomposition:} Fix $\gamma \in (0,2)$, and let $\mathcal S$ be a $(\gamma - 2/\gamma)$-quantum wedge with canonical description $h$.  Let $\eta$ be a chordal SLE$_\kappa$ in $\H$ from $0$ to $\infty$, with $\kappa = \gamma^2$, chosen independently of $h$.  Let $D_1$ and $D_2$ be the left and right components of $\H \setminus \eta$, and let $h^{D_1}$ and $h^{D_2}$ be the restrictions of $h$ to these domains.  Then the quantum surfaces represented by $(D_1, h^{D_1})$ and $(D_2, h^{D_2})$ are {\em independent} $\gamma$-quantum wedges (marked at $0$ and $\infty$), and their quantum boundary lengths along $\eta$ agree.

{\bf Zipper stationarity:} Moreover, suppose we define $$\zlength_{-t}\bigl((D_1, h^{D_1}), (D_2, h^{D_2})\bigr)$$ as follows.  First find $z$ on $\eta$ for which the quantum boundary lengths along $D_1$ and $D_2$ (which are well defined by unzipping) along $\eta$ between $0$ and $z$ are both equal to $t$.  Let $t'$ be the time that $\eta$ hits $z$ (when $\eta$ is parameterized by capacity) and define $$\zlength_{-t}\bigl((D_1, h^{D_1}), (D_2, h^{D_2})\bigr) = \,\,\, \mathrm{rescaling}\,\, \mathrm{of} \,\,\, \bigl(f^\eta_{t'} (D_1, h^{D_1}), f^\eta_{t'}(D_2, h^{D_2})\bigr),$$ where the rescaling is done via \eqref{Qmaprescaling} with the parameter $a$ chosen so that $B_1(0)$ has area one in the transformed quantum measure. Then the following hold: \begin{enumerate}
\item The inverse $\zlength_t$ of the operation $\zlength_{-t}$ is a.s.\ uniquely defined (via conformal welding).
\item $\zlength_{s+t} = \zlength_s \zlength_t$ almost surely for $s,t \in \R$.
\item The law of the pair $\bigl((D_1, h^{D_1}), (D_2, h^{D_2})\bigr)$ is invariant under $\zlength_t$ for all $t \in \R$.
\end{enumerate}
\end{theorem}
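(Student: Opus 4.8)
The plan is to deduce Theorem \ref{t.lengthstationary} from the capacity-parameterized zipper of Corollary \ref{stationaryzipping} in two moves: first re-parameterize that zipper by quantum boundary length, using the welding identity of Theorem \ref{conformalwelding}; then pass to an area-rescaling limit that recognizes the stationary pair as a pair of independent $\gamma$-quantum wedges, using the wedge characterizations of Propositions \ref{p.gammawedge}, \ref{p.gammawedgeinvariant} and \ref{p.gammawedge2}.

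Start in the setting of Corollary \ref{stationaryzipping}: $\kappa=\gamma^2\in(0,4)$, $h=\h_0+\widetilde h$ with $\h_0(z)=\tfrac{2}{\gamma}\log|z|$ and $\widetilde h$ a free boundary GFF, and $\eta$ an independent SLE$_\kappa$ from $0$ to $\infty$. By Theorem \ref{conformalwelding} the quantum boundary lengths of $D_1$ and of $D_2$ along $\eta$ agree, so the common quantum length of the arc of $\eta$ between $0$ and $\eta(t)$ is a well-defined function of the capacity parameter $t$; since $\nu_h$ is atomless and $\eta$ is simple, this function is continuous and strictly increasing, with inverse $\sigma$. By construction $\zlength_{-t}$ is then the composition of $f^\eta_{\sigma(t)}$ with the rescaling \eqref{Qmaprescaling} to the canonical description. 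The identity $\zlength_{s+t}=\zlength_s\zlength_t$ (item 2) follows from the group law $\zcap_{s+t}=\zcap_s\zcap_t$ of Corollary \ref{stationaryzipping}, together with the additivity of $\nu_h$-length along $\eta$ under the forward flow (a consequence of the invariance of $\nu_h$ under \eqref{Qmap}) and the composition rule for the rescalings \eqref{Qmaprescaling}. The a.s.\ unique invertibility of $\zlength_{-t}$ (item 1) is exactly the statement that a pair of quantum surfaces welded along matching boundary lengths determines the interface curve, which is the removability argument following Theorem \ref{conformalweldingunique}, valid for SLE$_\kappa$ with $\kappa<4$ (applied here to the full curve $\eta$). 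Transporting Corollary \ref{stationaryzipping} through the (strong Markov) time change $\sigma$ then shows that in this setting the law of $\bigl((D_1,h^{D_1}),(D_2,h^{D_2})\bigr)$ is invariant under $\zlength_t$ for all $t$.

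The field $h=\h_0+\widetilde h$ does not describe a scale-invariant surface, so one more step is needed to reach a quantum wedge. Recall that a quantum wedge is the $C\to\infty$ limit of the area-rescaled surfaces in the construction preceding Proposition \ref{p.gammawedge}: in terms of the radial process $h_\eps(0)$ (indexed by $-\log\eps$), this limit replaces a Brownian motion with drift by the two-sided process $A_s$ defined there, the scale-invariant angular part $h^\dagger$ being left unchanged. Applying $\zlength_{-t}$ and letting $t\to\infty$ realizes exactly this limit on the \emph{pair}: the origin recedes to quantum length $-\infty$ along $\eta$, and by the $\zlength_t$-stationarity just established together with a tightness argument, $\bigl((D_1,h^{D_1}),(D_2,h^{D_2})\bigr)$ converges in law (in the topology of convergence of doubly-marked quantum surfaces) to a $\zlength_t$-stationary limiting pair. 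To identify this limit, note that by the domain Markov property of SLE$_\kappa$ and the conformal invariance of the GFF, zooming in near the point of $\eta$ at quantum length $t$ is, as $t\to\infty$, the same as the zooming procedure of Proposition \ref{p.gammawedge2} near a $\nu_h$-typical boundary point; combined with Proposition \ref{p.gammawedgeinvariant} (stationarity under shifting the origin by a prescribed quantum length, which pins the law down within its absolute-continuity class), this forces each piece of the limiting pair to be a $\gamma$-quantum wedge and the welded surface to be a $(\gamma-2/\gamma)$-quantum wedge --- the relabeling of the log-coefficient from $2/\gamma$ to $\gamma-2/\gamma$ being dictated by the coordinate change \eqref{Qmap} between $\H$ and the corresponding wedge of angle $(1-\alpha/Q)\pi$, once the boundary behaviour of the free GFF across $\eta$ is accounted for. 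Because $\eta$ is SLE$_\kappa$ independent of $h$ and this persists in the limit, the welded surface of the limiting pair is a $(\gamma-2/\gamma)$-quantum wedge decorated by an independent SLE$_\kappa$; reading this backwards (using removability, so that cutting and re-welding is the identity) yields the ``Wedge decomposition'' statement, and item 3 is then immediate from the stationarity of the limit (and can also be seen directly: shifting the marked point of $\mathcal S$ by $t$ units of quantum length along $\eta$ gives, by Proposition \ref{p.gammawedgeinvariant}, again a $(\gamma-2/\gamma)$-wedge carrying an independent SLE$_\kappa$).

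The one genuinely new ingredient is the \emph{independence} of the two $\gamma$-quantum wedges, and I expect this to be the main obstacle. In the Corollary \ref{stationaryzipping} setting $h^{D_1}$ and $h^{D_2}$ are strongly correlated (they are restrictions of a single field, sharing its values along $\eta$), so independence can appear only in the $t\to\infty$ limit. The plan is to control that limit using (i) the orthogonal decomposition of the free GFF into its radial log-average process and its scale-invariant angular part $h^\dagger$, which are independent, so that in the limit the data on each side of $\eta$ is driven only by its own radial process and its own angular part; and (ii) the explicit joint law in Proposition \ref{p.gammawedge2}, which shows that once the quantum lengths on the two sides of the zoomed-in point are prescribed, the residual randomness on the two sides decouples. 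One then argues that the only inter-side coupling that survives the limit is the matching of quantum boundary lengths along $\eta$ --- the welding constraint of Theorem \ref{conformalwelding} --- and that this is entirely absorbed into the identification of the welded surface with the $(\gamma-2/\gamma)$-quantum wedge rather than into any residual dependence between the pieces. This is the precise form of the ``surprising coincidence'' highlighted in the introduction, and it is exactly where removability (so that $\eta$ carries no information beyond the welding) and the scale invariance of the quantum wedge are both indispensable.
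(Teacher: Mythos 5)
There is a genuine gap, and it sits at the center of your argument. You claim that ``transporting Corollary \ref{stationaryzipping} through the (strong Markov) time change $\sigma$'' shows that the law of $\bigl((D_1,h^{D_1}),(D_2,h^{D_2})\bigr)$ appearing in Corollary \ref{stationaryzipping} is invariant under $\zlength_t$. This is false for two reasons. First, in that setting $h=\h_0+\widetilde h$ is defined only modulo an additive constant, and $\zlength_t$ (zipping by an actual amount of quantum length) is not even defined until the constant is fixed. Second, and more fundamentally, stationarity under the deterministic capacity-time shifts $\zcap_t$ does not transfer under a field-dependent random time change: to obtain stationarity in a new time one must reweight the underlying measure (this is exactly the content of Proposition \ref{p.densitystationary} and, in the form actually needed here, of Lemma \ref{l.weightedconditionalzipping}, where the law $\Gamma$ is weighted by $\nu_h[-\delta,0]$ and a quantum-typical point is sampled). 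That reweighting is precisely what adds $-\gamma\log|\cdot|$ to $\h_0=\frac{2}{\gamma}\log|\cdot|$ and turns the capacity-stationary field into a $(\gamma-2/\gamma)$-quantum wedge; your alternative explanation of the coefficient $\gamma-2/\gamma$ via the coordinate change to a Euclidean wedge of angle $(1-\alpha/Q)\pi$ does not supply this mechanism. Because the stationarity claim is wrong, the subsequent $t\to\infty$ tightness/limit argument has no stationary law to converge to, and the identification of the limit --- including the independence of the two $\gamma$-wedges and the independence of $\eta$ from $h$, which you yourself flag as the main obstacle --- is left as a sketch. In the paper these points are not obtained by a soft limiting argument but by an explicit computation: given the sampled point $x$, the conditional zipping-up dynamics is the reverse SLE$_{\kappa,\rho}$ of Theorem \ref{reversecouplingkapparho} with $\rho=\gamma^2=\kappa$, so $f_t(x)$ is a Bessel process of dimension $3-4/\gamma^2<2$ that hits zero, the circle-average drift vanishes under the zoom-in, the relation $\rho_1'=\kappa-\rho_1$ identifies the zipped-up curve as ordinary SLE$_\kappa$ independent of the field, and Proposition \ref{p.gammawedge2} (resampling in small balls around $x$ and $R(x)$ with the side lengths conditioned on) yields the independence of the two wedges.

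There is also a circularity relative to the paper's logical structure: you invoke Theorem \ref{conformalwelding} (equality of left and right quantum lengths) as an input, both directly and through Corollary \ref{stationaryzipping}, but the paper's only proof of Theorem \ref{conformalwelding} is deduced \emph{from} Theorem \ref{t.lengthstationary} via the ergodic-theorem argument ($F(s)=cs$ with $c=1$ by left--right symmetry) at the end of Section \ref{ss.zipperconlusion}; the alternative direct proofs are only described as plausible, not carried out. So as written your route either assumes what has not been independently established or must be supplemented with a self-contained proof of the welding identity, in addition to repairing the stationarity step above.
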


\begin {figure}[h]
\begin {center}
\includegraphics [height=2in]{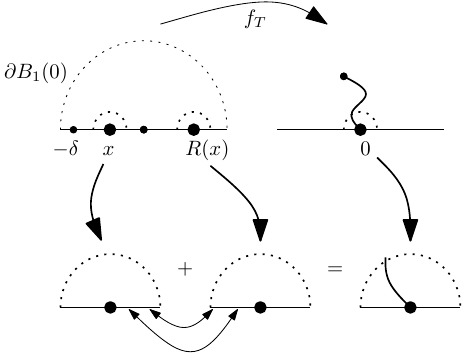}
\caption {\label{triplezoom} Choose $h$ as in Theorem \ref{reversecoupling} (normalized by $h_1(0) = 0$) except with the law of $h$ {\em weighted} by
$\nu_h\bigl([-\delta,0]\bigr)$ for some fixed $\delta \in (0,1)$.  Then conditioned on $h$, sample $x$ from $\nu_h$ restricted to $[-\delta,0]$ (normalized to be a probability measure).  Take $T$ so that $f_T$ is the map zipping up $[x,0]$ with $[0,R(x)]$.  Consider the three random surfaces obtained by choosing a semi-disc of quantum area $\tilde \eps$ centered at each of $x$ and $R(x)$ (on the left side) and $0$ (on the right side), and multiplying areas by $1/\tilde \eps$ (zooming in) so that all three balls have quantum area $1$.  In the $\tilde \eps \to 0$ limit, the left two quantum surfaces become independent $\gamma$-quantum wedges, and the right is the conformal welding of these two.}
\end {center}
\end {figure}

It also follows from Theorem \ref{conformalweldingunique} and the subsequent discussion that the two independent $\gamma$-quantum wedges uniquely determine $h$ and $\eta$ almost surely.
We refer to the group of transformations $\zlength_t$ as the {\bf length quantum zipper}.  When $t>0$, applying $\zlength_t$ is called ``zipping up'' the pair of quantum surfaces by $t$ quantum length units and applying $\zlength_{-t}$ is called ``zipping down'' or ``unzipping'' by $t$ quantum length units.  When we defined the operations $\zcap_t$, $h$ was defined only up to additive constant, and the zipping maps $f_t$ were independent of that constant.  By contrast, $\zlength_t$ represents zipping by an actual quantity of quantum length and hence cannot be defined without the additive constant being fixed.

We will give a detailed proof in Section \ref{s.zipper}, which is in some sense the heart of the paper.  But for now, let us give a brief overview of the proof and the relationship to our other results.  We will start with the scenario described in Figure \ref{zipperfig}, with $h$ normalized to have mean zero on $\partial B_1(0)$, except that the measure $dh$ on $h$ is replaced by the probability measure whose Radon-Nikodym derivative w.r.t.\ $dh$ is $\nu_h(-\delta,0)$ for some fixed $\delta$ (see Figure \ref{triplezoom}).

Then we will sample $x$ from $\nu_h$ restricted to $(-\delta,0)$ (normalized to be a probability measure) and ``zip up'' until $x$ hits the origin (to obtain a ``quantum-length-typical'' configuration).  We then zoom in near the origin (multiplying the area by $\tilde \eps^{-1}$ --- and hence the boundary length by $\tilde \eps^{-1/2}$ --- say).  We then use a variant of Proposition \ref{p.gammawedge} (namely, Proposition \ref{p.gammawedge2}) to show that (in the $\tilde \eps \to 0$ limit) the lower two rescaled surfaces on the lower left of Figure \ref{triplezoom} become independent $\gamma$-quantum wedges.

The fact that the curve on the right in Figure \ref{triplezoom} is (in the $\tilde \eps \to 0$ limit) an SLE$_\kappa$ independent of the canonical description $h$ on the right will be shown in Section \ref{s.zipper} by directly calculating the law of the process that ``zips up'' $[x,0]$ with $[0,R(x)]$.
It could also be seen by showing that we can construct an equivalent pair of glued surfaces by beginning with Figure \ref{zipperfig} (with $h$ normalized to have mean zero on $\partial B_1(0)$) and then zipping {\em down} by a random amount (chosen uniformly from an interval) of quantum length, then zooming in by multiplying lengths by $1/\tilde \eps$, and then taking the $\tilde \eps \to 0$ limiting law.   (In this case, the domain Markov property of the original SLE, and its independence from the original GFF, would imply that the conditional law of the still-zipped portion of the curve is an SLE$_\kappa$, independent of $h$.)

Similar arguments to those in \cite{2008arXiv0808.1560D} will show that the procedure in Figure \ref{triplezoom} produces a configuration related to the one in Figure \ref{reverseftfig} except that it is in some sense {\em weighted} by the amount of quantum mass near zero.  It will turn out that this weighting effectively adds $-\gamma \log|\cdot|$ to the $\h_0$ of Theorem \ref{reversecoupling} and Corollary \ref{stationaryzipping}.  This is why Theorem \ref{t.lengthstationary} involves a $(\gamma-2/\gamma)$-quantum wedge, instead of a $(-2/\gamma)$-quantum wedge, as one might initially guess based on Theorem \ref{reversecoupling}.  Once we have all of this structure in place, the really crucial step will be showing that parameterizing time by the amount of ``left boundary quantum length'' zipped up yields the same stationary picture as parameterizing time the amount of ``right boundary quantum length'' zipped up.  Given this, we will then use the ergodic theorem to show that over the long term, the amount of left bounday quantum length zipped up approximately agrees with the amount of right boundary length zipped up.  Using scale invariance symmetries we will then deduce that this agreement almost surely holds exactly on all scales.

\subsection{Organization}

Section \ref{s.interpretation} provides heuristic justification and motivation for the main results about AC geometry and Liouville quantum gravity.  (An interpretation of AC geometry in terms of ``imaginary curvature'' appears in the appendix.) Section \ref{GFFoverview} gives a brief overview of the Gaussian free field.   Section \ref{couplingsection} proves Theorems \ref{forwardcoupling} and \ref{reversecoupling}, along with a generalization to other underlying geometries. Section \ref{s.zipper} proves Theorems \ref{t.lengthstationary} and \ref{conformalwelding} (in that order), along with some additional results about zipping processes and time changes.  (Recall that we have already proved that Theorem \ref{conformalweldingunique} is a consequence of  Theorem \ref{conformalwelding}.)  Section \ref{s.questions}, finally, presents a list of open problems and conjectures.

\section{Geometric interpretation} \label{s.interpretation}
This section summarizes some of the conjectures and intuition behind our main results, including some discrete-model-based reasons that one would expect the coupling and welding theorems to be true.  This section may be skipped by the reader who prefers to proceed directly to the proofs.

\subsection{Forward coupling: flow lines of $e^{ih/\chi}$} \label{windingintro}

Fix a planar domain $D$, viewed as a subset of $\mathbb C$, a
function $h: D \rightarrow \mathbb R$, and a constant $\chi > 0$. An
{\bf AC ray} of $h$ is a flow line of the complex vector field
$e^{ih/\chi}$ beginning at a point $x \in \overline D$ --- i.e., a
path $\eta:[0,\infty) \to \C$ that is a solution to the ODE:
\begin{equation} \label{e.flowODE} \eta'(t) := \frac{\partial}{\partial t} \eta(t) =
e^{ih(\eta(t))/\chi} \text{  when $t > 0$ }, \,\,\,\,\,\,\,\,
\eta(0) = x,
\end{equation}
until time $T = \inf \{t > 0: \eta(t) \not \in D \}$. When $h$ is Lipschitz, the standard
Picard-Lindel\"of theorem implies that if $x \in D$, then
\eref{e.flowODE} has a unique solution up until time $T$ (and $T$ is
itself uniquely determined). The reader can visually follow the flow
lines in Figure \ref{flowfigure}.

\begin {figure}[htbp]
\begin {center}
\includegraphics [bb=0in 0in 4in 4in, width = 2.5in]{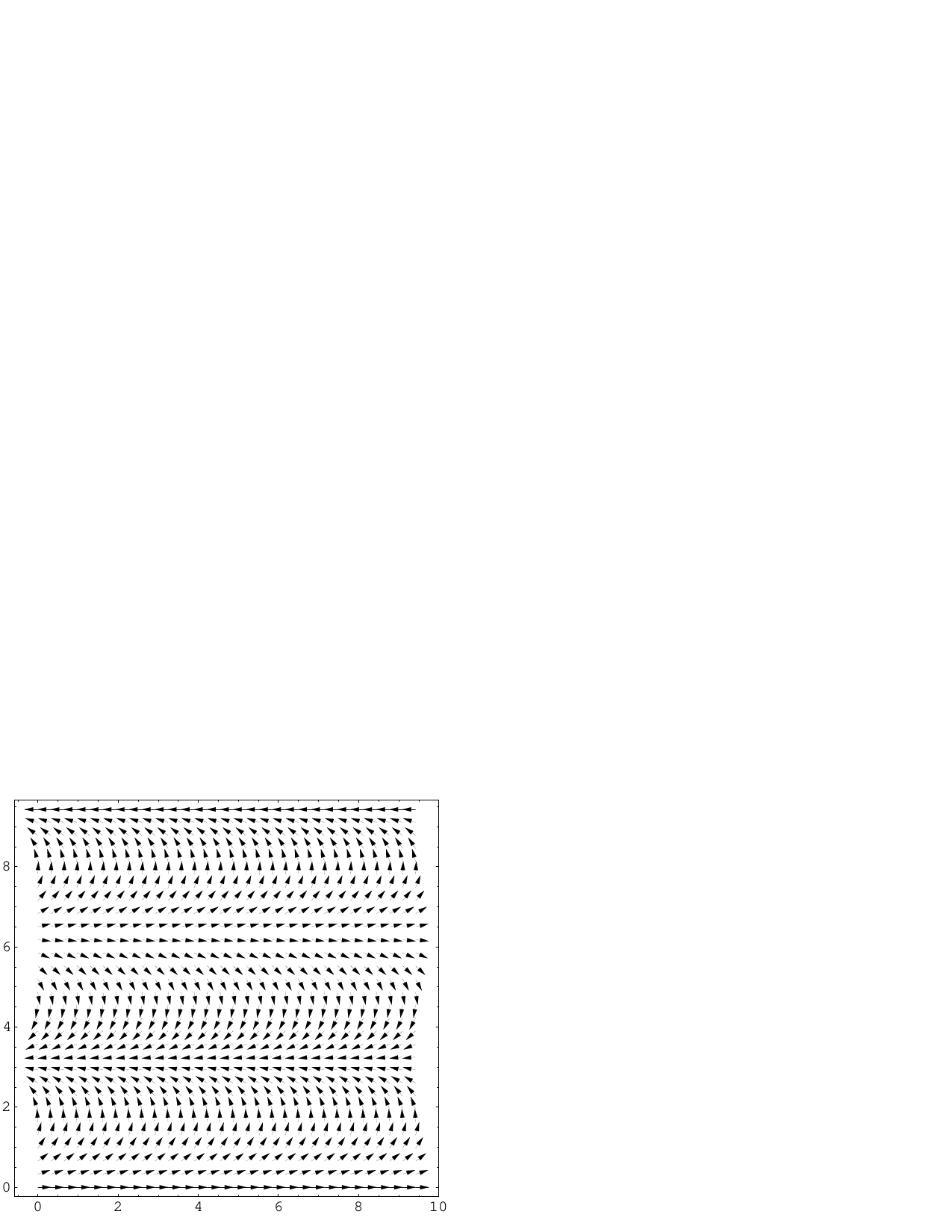}
\includegraphics [bb=0in 0in 4in 4in, width = 2.5in]{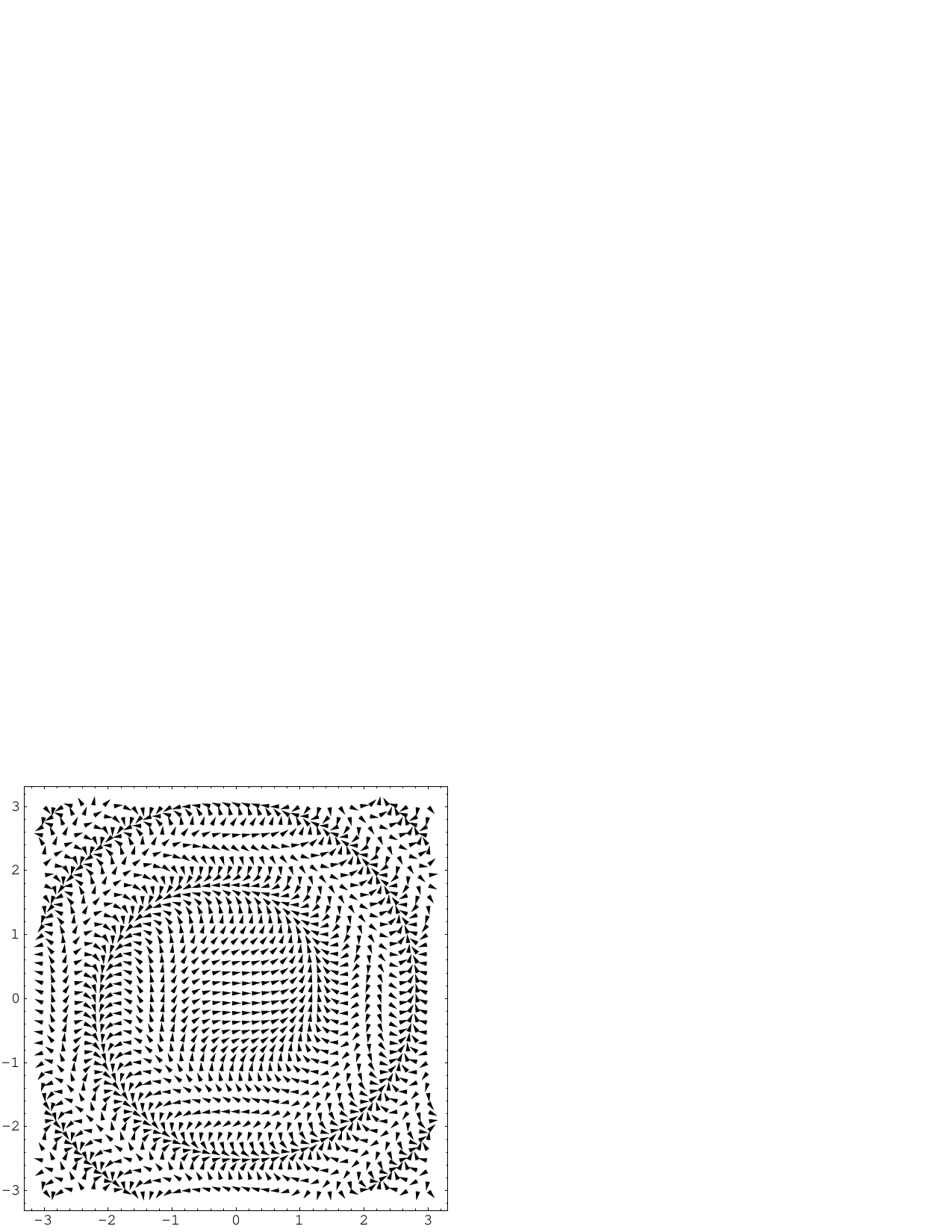}
\caption {\label{flowfigure} The complex vector flow $e^{ih}$: $h(x,y) = y$,
$h(x,y) = x^2+y^2$.}
\end {center}
\end {figure}

If $h$ is continuous, then the time derivative $\eta'(t)$ moves
continuously around the unit circle, and $h(\eta(t)) -
h(\eta(0))$ describes the net amount of {\it winding} of $\eta'$
around the circle between times $0$ and $t$.

A major problem (addressed in depth in an imaginary geometry series \cite{ms2012imag1,ms2012imag2,ms2012imag3,ms2013imag4}) is to make sense of these flow lines when $h$ is a multiple of the Gaussian free field.  We will give here just a short overview of the way these objects are constructed.  Suppose that $\eta$ is a smooth simple path in $\H$ beginning at the
origin, with (forward) Loewner map $f_t = f^\eta_t$.  We may assume that $\eta$ starts out in the vertical direction,
so that the winding number is $\pi/2$ for small times.
Then when $\eta$ and $h$ are both smooth, the statement that
$\eta$ is a flow line of $e^{i h/\chi}$ is equivalent to
the statement that for each $x$ on $\eta\bigl((0,t)\bigr)$ we have \begin{equation} \label{argleft} \chi \arg f_t'(z) \to -h(x)\end{equation} as $z$ approaches $x$ from
the left side of $\eta$ and \begin{equation} \label{argright} \chi \arg f_t'(z) \to - h(x) + \chi \pi\end{equation} as $z$ approaches $x$ from the right side of $\eta$ (as Figure \ref{windingfig} illustrates).  Recall that $\arg f'_t(z)$ --- a priori determined only up to a multiple of $2 \pi$ --- is chosen to be continuous on $\H \setminus \eta([0,t])$ and $0$ on $\R$.  If $\chi=0$, then \eqref{argleft} and \eqref{argright}
hold if and only if $h$ is identically zero along the path $\eta$, i.e.,
$\eta$ is a zero-height {\em contour line} of $h$.

\begin {figure}[htbp]
\begin {center}
\includegraphics [width=4.5in]{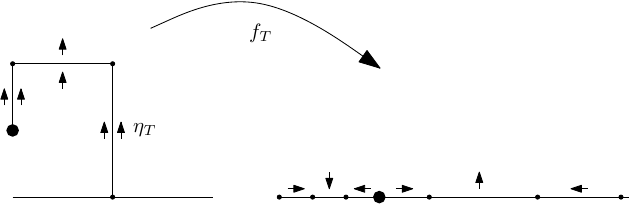}
\caption {\label{windingfig} Winding number along $\eta_T$ determines $\arg f_T'$, which is the amount a small arrow near $\eta_T$ is rotated by $f_T$.}
\end {center}
\end {figure}

In \cite{MR2486487, SchrammSheffieldGFF2}, it is shown that when one takes certain approximations $h^\eps$ of an instance $h$ of the GFF that are piecewise
linear on an $\eps$-edge-length triangular mesh, then conditioned on a zero chordal contour line of $h^\eps$ there is in some $\eps\to 0$ limiting sense a constant ``height gap'' between the expected heights immediately to one side of the contour line and those heights on the other.
We might similarly expect that if one looked at the expectation of $h^\eps$, given a chordal flow line $\eta^\eps$ of $e^{ih^\eps/\chi}$, there would be a constant order limiting height gap between the two sides, see Figure \ref{forwardftarrowfig}.

This suggests the form of $\h_t$ given in Theorem \ref{forwardcoupling}, which comes from taking \eqref{argleft} and \eqref{argright} and modifying the height gap between the two sides by adding a multiple of $\arg f_t$.  (As in \cite{SchrammSheffieldGFF2}, the size of the height gap --- and hence the coefficient of $\arg f_t$ in the definition of $\h_t$ --- is determined by the requirement that $\h_t(z)$ be a martingale in $t$, see Section \ref{couplingsection}.) Interestingly, the fact that winding may be ill-defined at a particular point on a fractal curve turns out to be immaterial.  It is the harmonic extension of the boundary winding values (the $\arg f_t'$) that is needed to define $\h_t$, and this is defined even for non-smooth curves.

\begin {figure}[htbp]
\begin {center}
\includegraphics [width=4in]{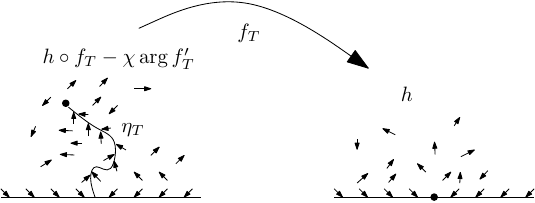}
\caption {\label{forwardftarrowfig} Forward coupling with arrows in $e^{ih/\chi}$ direction (sketch), illustrating the constant angle gap between the two sides of the curve $\eta$, constant angles along the positive and negative real axes, and random angles (not actually point-wise defined if $h$ is the GFF) in $\H \setminus \eta$.}
\end {center}
\end {figure}

The time-reversal of a flow line of $e^{ih^\eps/\chi}$ is a flow line of $e^{i(h^\eps/\chi + \pi)}$, which at first glance appears to imply that there should not be a height gap between the two sides (since if the left side were consistently
smaller for the forward path, then the right side would be consistently smaller for the reverse path).
To counter this intuition, observe that, in the left diagram in Figure \ref{flowfigure}, the left-going infinite
horizontal flow lines (at vertical heights of $k \pi$, $k$ odd) are ``stable'' in that the flow line
beginning at a generic point slightly off one of these lines will quickly converge to the line. The
right-going horizontal flow lines (at heights $k \pi$, $k$ even) are unstable.  In a stable flow
line, $h$ appears to generally be larger to the right side of the flow line and smaller to the left side.
It is reasonable to expect that a flow line of $e^{ih^\eps/\chi}$ started from a generic point would be approximately stable in that direction --- and in particular would look qualitatively different from the time reversal of a flow line of $e^{i(h^\eps/\chi+ \pi)}$ started from a generic point.

\subsection{Reverse coupling: planar maps and scaling limits} \label{ss.discretelimitdiscussion}
In this section, we conjecture a connection between path-decorated planar maps and SLE-decorated Liouville quantum gravity (in particular, the quantum-length-invariant decorated quantum wedge of
Theorem \ref{t.lengthstationary}).  We will explain the details in just one example based on the uniform spanning tree.  (Variants based on Ising and $O(n)$ and FK models on random planar maps --- or on random planar maps without additional decoration besides the chordal paths --- are also possible. Many rigorous results for percolation and the Ising model have been obtained for deterministic graphs in \cite{smirnov, MR2227824, smirnov:towards,smirnov-2007, CN,  2009arXiv0910.2045C} (and in many other papers we will not survey here), and one could hope to extend these results to random graphs.  One could also consider discrete random surfaces decorated by loops and in the continuum replace SLE decorations with CLE decorations \cite{MR2494457, 2010merged}.)  As mentioned earlier, we will see that the more surprising elements of Theorem \ref{t.lengthstationary} are actually quite natural from the discrete random surface point of view.

Let $G$ be a planar map with exactly $n$ edges (except that each edge on the outer face is counted as {\em half} an edge) and let $T$ be a subgraph consisting of a single boundary cycle, a chordal path from one boundary vertex $a$ to another boundary vertex $b$ that otherwise does not hit the boundary cycle, and a spanning forest rooted at this ``figure 8'' structure.  (See Figure \ref{planarmapfig}.)  Here $T$ is like the wired spanning tree (in which the entire boundary is considered to be one vertex), except that there is also one chord connecting a pair of boundary vertices.  What happens if we consider the uniform measure on all pairs $(G,T)$ of this type?  This model is fairly well understood combinatorially (tree-rooted maps on the sphere are in bijective correspondence with certain walks in $\Z^2$ --- see, e.g.,  \cite{MR0205882} as well as \cite{MR2285813} and the references therein --- and our model is a simple variant of this) and in particular, it follows from these bijections that the length of the
boundary of the outer face of this map will be of order $\sqrt n$ with high probability when $n$ is large.  Now, can we understand the scaling limit of the random pair $(G,T)$ as $n \to \infty$?

\begin {figure}[htbp]
\begin {center}
\includegraphics [width=3.5in]{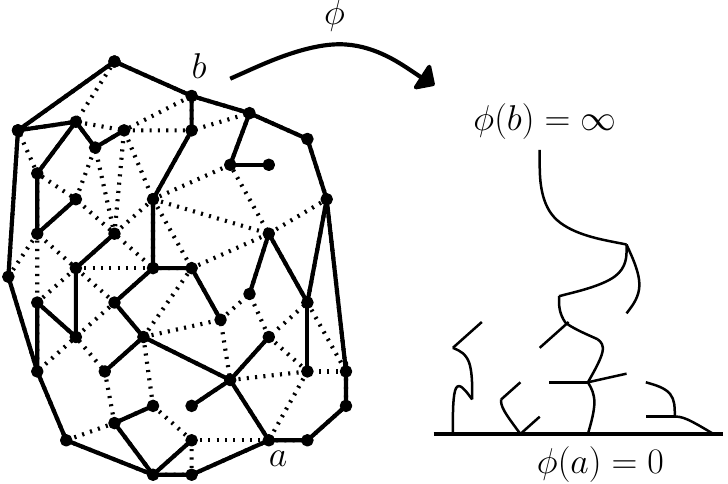}
\caption {\label{planarmapfig} Planar map with a distinguished outer-boundary-plus-one-chord-rooted spanning tree (solid black edges), with chord joining marked boundary points $a$ and $b$, plus image of tree under conformally uniformizing map $\phi$ to $\H$ (sketch).}
\end {center}
\end {figure}

There are various ways to pose this problem.  For example, one could consider $G$ as a metric space and aim for convergence in law w.r.t.\ the Gromov-Hausdorff metric on metric spaces.  The reader is probably aware that there is a sizable literature on the realization of a random metric space called the Brownian map as a Gromov-Hausdorff scaling limit of random planar maps of various types.  However, since this paper is concerned with the conformal structure of random geometr, we will try to phrase the the problem in a way that keeps track of that structure.

First, we would like to understand how to conformally map the planar map to the half plane, as in Figure \ref{planarmapfig}.  We may consider $G$ as embedded in a two-dimensional manifold with boundary in various ways, one of which we sketch here: first add an interior vertex to each face of $G$ and an edge joining it to each vertex of that face (as in Figure \ref{edgetosquarefig}).  Each interior edge of $G$ is now part of a quadrilateral (containing one vertex for each interior face of $G$ and one for each vertex of $G$) and we will endow that quadrilateral with the metric of a unit square $[0,1]\times [0,1]$.  Similarly, the triangle containing an exterior edge of $G$ is endowed with the metric of half a unit square (split on its diagonal, with the exterior edge as the hypotenuse).  When two squares or half squares share an edge, the points along that edge are identified with one another in a length preserving way.  We may view the collection of (whole and half) unit squares, glued together along boundaries, as a manifold (with isolated conical singularities at vertices whose number
of incident squares is not four) with a uniquely defined conformal structure (note that it is trivial to define a Brownian motion on the manifold, since it a.s.\ never hits the singularities).  We may choose a conformal map $\phi$ from this manifold to $\H$, sending $a$ to $0$ and $b$ to $\infty$, as sketched in Figure \ref{planarmapfig}.

\begin {figure}[htbp]
\begin {center}
\includegraphics [width=3.5in]{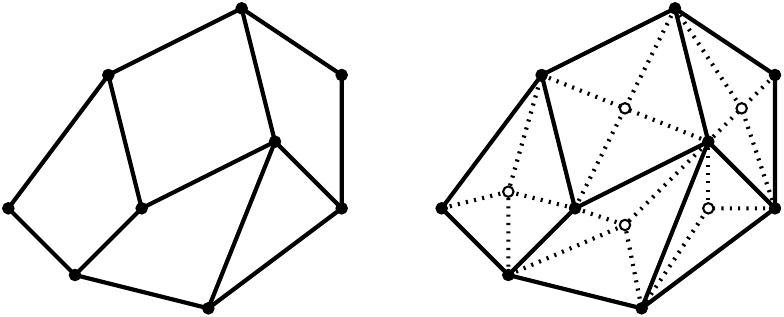}
\caption {\label{edgetosquarefig} An arbitrary planar map can be used to construct a collection of stitched-together unit squares and half unit squares.  The result is viewed as a two dimensional manifold with boundary.}
\end {center}
\end {figure}

This $\phi$ is determined only up to scaling, but we can fix the scaling in many ways.  We will do so by considering a number $k < n$ and requiring that the area of $\phi^{-1} (B_1(0))$ be equal to $k$.  Then $\phi$ determines a random measure on $\H$ (the image of the area measure on the manifold) in which the measure of $B_1(0)$ is deterministically equal to $k$; let $\mu_{n,k}$ denote this random measure divided by $k$, so that $\mu_{n,k}(B_1(0)) = 1$.
We expect that if one lets $n$ and $k$ tend to $\infty$ in such a way that $n/k$ tends to $\infty$, then the random measures $\mu_{n,k}$ will converge in law with respect to the metric of weak convergence on bounded subsets of $\H$ to the $\mu = \mu_h$ corresponding to the canonical description $h$ of the $(\gamma - 2/\gamma)$-quantum wedge of Theorem \ref{t.lengthstationary}.  (By compactness, the laws of the $\mu_{n,k}$ restricted to the closure of $B_1(0)$ have at least a subsequential limit.)  We similarly conjecture that $\nu_{n,k}$ --- defined to be $1/\sqrt{k}$ times the image of the manifold's boundary measure --- will converge in law to the corresponding $\nu_h$.  (We remark that one could alternatively formulate the conjecture by taking an infinite volume limit first --- i.e., letting $n$ go to infinity while keeping $k$ constant to define a limiting measure $\mu_{\infty, k} := \lim_{n \to \infty} \mu_{n,k}$.  This kind of infinite volume limit of random planar maps was constructed in \cite{MR2013797}.  One can subsequently take $k \to \infty$ and conjecture that the limit is $\mu_h$.  A similar conjecture in \cite{2008arXiv0808.1560D} was formulated in terms of infinite volume limits.)

We are currently unable to prove these conjectures, but related questions about Brownian motion on random surfaces have been explored in \cite{GR}, where it was shown that certain infinite random triangulations and quadrangulations ({\em without} boundaries) are parabolic (as opposed to hyperbolic) Riemann surfaces \cite{GR}.  (This is equivalent to showing that a Brownian motion visits each face infinitely often almost surely; see analogous discrete results in \cite{MR2013797}.)

Now let us make some more observations.  If we take $k$, $n$, and $n/k$ to be large and condition on $G$, $a$, and $b$, then what is the conditional law of $\phi(T)$, as depicted in Figure \ref{planarmapfig}?  The conditional law of $T$ itself is uniform among all valid $8$-rooted spanning forest configurations.  The physics literature frequently invokes a kind of ``conformal invariance Ansatz'' which suggests that this random path (and many other random sets in critical two dimensional statistical physics) should be a conformally invariant object.

In this case, we claim that the law of the chordal path should be approximately that of a chordal SLE$_2$ {\em even after we have conditioned on $G$, $a$, and $b$, which determine the measure $\mu_{n,k}$}.  The reason for our claim is that a related SLE$_2$ convergence result is obtained in \cite{MR2044671} in the case that $G$ is a deterministic lattice graph, and this was generalized substantially in \cite{2008arXiv0809.2643Y} where it was shown that if a graph can be embedded in the plane in such a way that simple random walk approximates Brownian motion, then the uniform spanning tree paths approximate a form of SLE$_2$.  We do not know whether the hypotheses of \cite{2008arXiv0809.2643Y} hold in our setting.  Brownian motion is conformally invariant, but it is not clear whether simple random walk on our random $G$ approximates Brownian motion on the corresponding quadrangulated manifold with high probability.  However, it seems very natural to conjecture that the hypotheses hold.  In any case, we stress the following: if our scaling limit conjecture holds, then the asymptotic independence of the chordal path from $\mu_{n,k}$ would be consistent with the independence of $\eta$ and $h$ in Theorem \ref{t.lengthstationary}.

Next let $D_1$ and $D_2$ be the wired-spanning-tree decorated manifolds to the left and right of the chordal path.  Note that once we condition on the length of the chordal path in $(G,T)$ and the number of edges on each side of it, the laws of $D_1$ and $D_2$ are independent of one another.  We might guess that the local behavior of $D_1$ and $D_2$ near $a$ would be approximately independent of these global numbers.  We expect a similar property to hold in the scaling limit, which would be consistent with the independence of the left and right quantum surfaces described in Theorem \ref{t.lengthstationary}.  (The idea of gluing together independent discrete surfaces in this manner has been explored in many works by Duplantier and others, beginning perhaps in \cite{1988PhRvL..61.1433D}.  The idea of gluing a whole series of discrete surfaces was used in \cite{MR1666816} to heuristically derive certain ``cascade relations'' via the KPZ formula.)

Finally, if we condition on the point $b$ and on $D_1$ and $D_2$, then the length of the path along which $D_1$ and $D_2$ are glued to each other is uniform among all possibilities (which range between $1$ and the minimum $M$ of the boundary lengths of the two $D_i$'s minus $1$).  In other words, once $D_1$ and $D_2$ and $b$ are all fixed, we can randomly decide how far to ``zip up'' or ``unzip'' these two surfaces (moving the vertex $a$ accordingly).  If $r$ is the random number of steps we zip, then $r$ and $r+m$ have approximately the same law (as long as $m/M$ is small).  We expect a similar property to hold in the scaling limit, which would be consistent with the quantum-length-zipper invariance described in Theorem \ref{t.lengthstationary}.

\section{Gaussian free field overview} \label{GFFoverview}

We refer the reader to \cite{Sh} for a survey of the Gaussian free field (GFF) and several additional references.  For
completeness, we include a short overview, closely following \cite{Sh, SchrammSheffieldGFF2}.  For the reader who is already familiar with the zero and free boundary GFF, it may be sufficient (to set notation) to read only the numbered equations in this section and the statement of Proposition \ref{p.GFFdef}.

\subsection{GFF definitions}
\subsubsection{Dirichlet inner product}
Fix a simply connected planar domain $D\subset \C$ (with $D \not = \C$).  Let $H_s(D)$ be the space of smooth, compactly supported functions on $D$,
and let $H(D)$ (sometimes denoted by $\H_0^1(D)$
or $W^{1,2}(D)$) be its Hilbert space closure under the Dirichlet inner product
$$(f_1,f_2)_\nabla := (2\pi)^{-1} \int_D \nabla f_1(z) \cdot \nabla f_2(z) dz.$$
Let $\psi$ be a conformal map from another domain $\widetilde D$ to $D$.  Then an elementary change of
variables calculation shows that $$\int_{\widetilde D} \nabla (f_1 \circ \psi ) \cdot
\nabla (f_2 \circ \psi )\,dx = \int_{D} (\nabla f_1 \cdot \nabla f_2)\,dx.$$
In other words, the Dirichlet inner product is invariant under conformal transformations.

We write $(f_1,f_2) = \int_D f_1(x)f_2(x)dx$ for the $L^2$ inner product on $D$.
We write $\|f\| := (f,f)^{1/2}$ and $\|f\|_\nabla := (f,f)_\nabla^{1/2}$.
If $f_1,f_2 \in H_s(D)$, then integration by parts gives \begin{equation} \label{e.intbyparts} (f_1,f_2)_\nabla = \frac{1}{2\pi}(f_1, -\Delta f_2).
\end{equation}

\subsubsection{Distributions and the Laplacian}

It is conventional to use $H_s(D)$ as a space of test functions.  This space is a topological vector space in which the topology is
defined so that $\phi_k \to 0$ in $H_s(D)$ if and only if
there is a compact
set on which all of the $\phi_k$ are supported and
the $m$th derivative of $\phi_k$ converges uniformly to zero for each integer $m \geq 1$.

A {\bf distribution} on $D$ is a continuous linear
functional on $H_s(D)$.
Since $H_s(D) \subset L^2(D)$, we
may view every $h \in L^2(D)$ as a distribution
$\density\mapsto (h,\density)$.
A {\bf modulo-additive-constant} distribution on $D$ is a continuous linear functional on the subspace of $H_s(D)$ consisting
of $\density$ for which $\int_D \density(z)dz = 0$.  We will frequently abuse notation and use $h$ --- or more precisely the map denoted by $\density \to (h,\density)$ --- to represent a general distribution (which is a functional of $\density$), even though $h$ may not correspond to an element of $L^2(D)$.  (Later, we will further abuse notation and use $\density$ to represent a non-smooth function or
a measure; in the latter case $(h, \density)$, when defined, will represent the integral of $h$ against that measure.)

We define partial derivatives and integrals of distributions in the
usual way (via integration by parts), i.e., for $\density \in H_s(D)$,
$$\bigl(\frac{\partial}{\partial x} h, \density\bigr):= -\bigl(h, \frac{\partial}{\partial x}
\density\bigr).$$  In particular, if $h$ is a distribution then
$\Delta h$ is a distribution defined by $(\Delta h, \density):= (h, \Delta \density)$.
When $h$ is a distribution and $\density \in H_s(D)$,
we also write $$(h, \density)_\nabla := \frac{1}{2\pi}(-\Delta h, \density) = \frac{1}{2\pi}(h, -\Delta \density).$$

When $x \in D$ is fixed, we let $\widetilde G_x(y)$ be the harmonic
extension to $y \in D$ of the function of $y$ on $\partial D$ given by
$-\log|y-x|$. Then {\bf Green's function in the domain $D$} is
defined by
$$G(x,y) = -\log|y-x| - \widetilde G_x(y).$$
When $x \in D$ is fixed, Green's function may be viewed as a
distributional solution of $\Delta G(x,\cdot)= -2 \pi
\delta_x(\cdot)$ with zero boundary conditions \cite{Sh}.
It is non-negative for all $x, y \in D$ and $G(x,y) = G(y,x)$.

For any $\density \in H_s(D)$, we write $$-\Delta^{-1} \density := \frac{1}{2\pi}\int_D G(\cdot ,y)\,\density (y)\,dy.$$  This is a $C^\infty$
function in $D$ whose Laplacian is $-\density$.  Indeed, a similar definition can be made if $\density$ is any signed measure (with finite positive and finite negative mass) rather than a smooth function.  Recalling \eqref{e.intbyparts}, if $f_1 = -2\pi \Delta^{-1} \density_1$ then $(h, f_1)_\nabla = (h, \density_1)$, and
similarly if $f_2 = -2\pi \Delta^{-1} \density_2$.  Now
$(f_1, f_2)_\nabla = (\density_1,-2\pi \Delta^{-1} \density_2)$ describes a covariance that can (by the definition of $-\Delta^{-1} \density_2$ above) be
rewritten as
\begin{equation}\label{e.greencovariance}\Cov\bigl((h, \density_1), (h, \density_2) \bigr) = \int_{D \times D} \density_1(x)\, G(x,y)\,
\density_2(y) \, dx \, dy.\end{equation}

If $\density \in H_s(D)$, may define the map $(h, \cdot)$ by $(h, \density) :=
(h, -2\pi \Delta^{-1} \density)_\nabla$, and this definition describes a distribution \cite{Sh}.  (It is not hard
to see that $-2\pi \Delta^{-1} \density \in H(D)$, since its Dirichlet
energy is given explicitly by \eqref{e.greencovariance}.)

\subsubsection{Zero boundary GFF}

An instance of the GFF with zero boundary conditions on $D$ is a
random sum of the form $h = \sum_{j = 1}^{\infty} \alpha_j f_j$
where the $\alpha_j$ are i.i.d.\ one-dimensional standard (unit
variance, zero mean) real Gaussians and the $f_j$ are an orthonormal
basis for $H(D)$.  This sum almost surely does not converge
within $H(D)$ (since $\sum_{j=1}^\infty |\alpha_j|^2$ is a.s.\ infinite).  However, it does converge almost surely within the space
of distributions --- that is, the limit $(\sum_{j=1}^{\infty} \alpha_j f_j, \density)$
almost surely exists for all $\density \in H_s(D)$, and the limiting value as a function
of $\density$ is almost surely a continuous functional on $H_s(D)$ \cite{Sh}.
We may view $h$ as a sample from the measure space
$(\Omega, \mathcal F)$ where $\Omega = \Omega_D$ is the set of distributions
on $D$ and $\mathcal F$ is the smallest $\sigma$-algebra that makes
$(h, \density)$ measurable for each $\density \in H_s(D)$, and we sometimes denote by $dh$
the probability measure which is the law of $h$.  If $f_j$ are chosen in $H_s(D)$, then the values $\alpha_j$ are clearly $\mathcal F$-measurable.
In fact, for any $f \in H(D)$ with $f = \sum_j \beta_j f_j$ the sum $(h,f)_\nabla := \sum_j \alpha_j \beta_j$ is a.s.\ well defined and is
a Gaussian random variable with mean zero and variance $(f,f)_\nabla$.

\subsection{Green's functions on $\C$ and $\H$: free boundary GFF} \label{s.green}
The GFF with free boundary conditions is defined the same way as the GFF with zero boundary conditions except that
we replace $H_s(D)$ with the space of all smooth functions with gradients in $L^2(D)$ (i.e., we remove the requirement that the functions be compactly supported).  However, to make the correspondingly defined $H(D)$ a Hilbert space, we have to consider functions
only modulo additive constants (since all constant functions have norm zero).  On the whole plane $\C$, we may define the Dirichlet inner product on the Hilbert space closure $H(\C)$ of the space of such functions defined modulo additive constants.

Generally, given a compactly supported $\density$ (or more generally, a signed measure), we can write
\begin{equation}\label{e.deltainverse}-\Delta^{-1} \density(\cdot) := \frac{1}{2\pi} \int_\C G(\cdot, y) \density(y)dy,\end{equation}
with $G(x,y) = -\log|x-y|$.

As before, for compactly supported $f$ and $g$, we have $(f,g)_\nabla = \frac{1}{2\pi}(f, -\Delta g)$ by integration by parts, and moreover $(f,-\Delta^{-1} \density)_\nabla = \frac{1}{2\pi}(\density, f)$.  The same holds for bounded and not necessarily compactly supported smooth functions $f$ and $g$ if the gradient of $-\Delta^{-1} \density$ tends to zero at infinity, which in turn happens if and only if $\int_\C \density(z) dz = 0$.

If $\int_\C \density(z) dz \not = 0$ then the Dirichlet energy of $-\Delta^{-1} \density$ will be infinite and moreover $(h, \density)$ will not be independent of the additive constant chosen for $h$.  (If we view $\C$ as a Riemann sphere, then $\int_\C \density(z) dz \not = 0$ can also be interpreted as the statement that the Laplacian of $-\Delta^{-1} \density$ has a point mass at $\infty$.)  When $h$ is the free boundary GFF on $\C$, we will thus define the random variables $(h,\density)$ only if the integral of $\density$ over $\C$ is zero.  If $\density_1$ and $\density_2$ each have total integral zero, we may write

\begin{equation} \label{e.Ccovariance} \Cov ((h, \density_1), (h, \density_2)) = \int_{\C \times \C} \density_1(x) G(x,y) \density_2(y)dxdy.\end{equation}

Using $z \to \bar z$ to denote complex conjugation, we define, for smooth functions $h \in H(\C)$, the pair of projections
\begin{eqnarray*} h^\odd(z) & := &\frac{1}{\sqrt{2}} (h(z) - h(\bar z)), \\
h^\even(z)& := &\frac{1}{\sqrt{2}} (h(z) + h(\bar z)).
\end{eqnarray*}
If $h$ is an instance of the free boundary GFF on $\C$, we may still define $h^\odd$ and $h^\even$ as projections of $h$ onto complementary orthogonal subspaces.  Their restrictions to $\H$ are instances of the zero boundary GFF and free boundary GFF, respectively on $\H$.  For $\rho$ supported on $\H$ we write (for $z \in \mathbb C$) $\density^*(z) := \density(\overline z)$.  Then we have by definition
\begin{eqnarray*} (h^\odd,\density) = \frac{1}{\sqrt 2} (h, \density - \density^*) \\
 (h^\even,\density) = \frac{1}{\sqrt 2} (h, \density + \density^*). \end{eqnarray*}
Note that $(h^\even, \density)$ is only defined if the total integral of $\density$ is zero, while $(h^\odd, \density)$ is defined without that restriction (since in any case the total integral of $\density - \density^*$ will be zero).

For $\density_1$ and $\density_2$ supported on $\H$ we now compute the following (first integral taken over $\C \times \C$, second over $\H \times \H$): \begin{align} \Cov\bigl( (h^\odd, \density_1), (h^\odd, \density_2) \bigr) &=  \frac{1}{2} \int (\density_1(x) - \density^*_1(x)) \log|x-y| (\density_2(y) - \density^*_2(y))dxdy \notag \\
 & = \int \density_1(x)G^{\H_0}(x,y)\density_2(y)dxdy, \label{e.H0covariance} \end{align}
where $G^{\H_0}(x,y) := \log|x - \bar y| - \log|x-y|.$  Similarly (first integral over $\C \times \C$, second over $\H \times \H$), \begin{align} \Cov\bigl( (h^\even, \density_1), (h^\even, \density_2) \bigr) &= \frac{1}{2} \int (\density_1(x) + \density^*_1(x)) \log|x-y| (\density_2(y) + \density^*_2(y))dxdy \notag \\
& =  \int \density_1(x)G^{\H_F}(x,y)\density_2(y)dxdy, \label{e.Hfreecovariance} \end{align}
where $G^{\H_F}(x,y) := -\log|x - \bar y| - \log|x-y|$.

\subsection{GFF as a continuous functional} \label{ss.GFFascontinuousfunctional}

Note that we could have used \eqref{e.H0covariance} and \eqref{e.Hfreecovariance} to give an alternate and more direct definition of the zero and free boundary Gaussian free fields on $\H$.  Here \eqref{e.H0covariance} and \eqref{e.Hfreecovariance} define inner products on the space of functions $\density$ on $\H$.  They are well defined when $\density_1$ and $\density_2$ are smooth and compactly supported functions on $\H$ (each with total integral zero in the case of \eqref{e.Hfreecovariance}).  By taking the Hilbert space closure of functions of this type, we get a larger space of $\density$, which correspond to Laplacians of elements of $H(\H)$, and which cannot all be interpreted as functions on $\H$.  For example, the $\density$ for which $(h,\density)$ is $h_\eps(z)$, the mean value of $h$ on $\partial B_\eps(z)$, is not a function, though it can be interpreted as a measure --- a uniform measure on $\partial B_\eps(z)$ --- and the inner products \eqref{e.H0covariance} and \eqref{e.Hfreecovariance} still make sense when $\density_1(z)dz$ and $\density_2(z)dz$ are replaced with more general measures, as do the definitions of $-\Delta^{-1} \density_1$ and $-\Delta^{-1} \density_2$.

The $(h,\density)$ are centered jointly Gaussian random variables, defined for each $\density$ in this Hilbert space, with covariances given by the inner products \eqref{e.H0covariance} and \eqref{e.Hfreecovariance} (which can be defined on the entire Hilbert space).  For each particular $\density$ in this Hilbert space, $(h, \density)$ is a.s.\ well defined and finite; however, $\density \to (h, \density)$ is almost surely not a continuous linear functional defined on the entire Hilbert space, since a.s.\ $h \not \in H(\H)$.

In addition to the description of $h$ as a distribution above, there are various ways to construct a space of $\density$ values --- a subset of the complete Hilbert space --- endowed with a topology that makes $\density \to (h, \density)$ almost surely continuous.  For example, the map $h \to h_\eps(z)$ is an a.s.\ H\"older continuous function of $\eps$ and $z$ \cite{2008arXiv0808.1560D}.  Also, the zero boundary GFF can be defined as a random element of $(-\Delta)^{-\eps} L^2(D)$ for any $\eps > 0$, and is hence a continuous linear function on $(-\Delta)^{\eps} L^2(D)$, if $D$ is bounded. (See \cite{Sh} for definitions and further discussion of fractional powers of the Laplacian in this context.)  Also, as mentioned earlier, both the free and zero boundary GFFs can be understood as random distributions \cite{Sh}.

The issues that come up when defining $\density \to (h, \density)$ as a continuous function on some topological space of $\density$ values are the same ones that come up when rigorously constructing a Brownian motion $B_t$: one can give the joint law of $B_t$ for any finite set of $t$ values explicitly by specifying covariances, and this determines the law for any fixed countable set of $t$ values, but one needs to overcome some (mild) technicalities in order to say ``$B_t$ is almost surely a continuous function.'' Indeed, if one uses the smallest $\sigma$-algebra in which $B_t$ is measurable for each fixed $t$, then the event that $B_t$ is continuous is not even in the $\sigma$-algebra.

On the other hand, if we are given a construction that produces a random continuous function with the right finite dimensional marginals, then it must be a Brownian motion.  A standard fact (proved using characteristic functions and Fourier transforms) states that a random variable on a finite dimensional space is a centered Gaussian with a given covariance structure if and only if all of its one dimensional projections are centered Gaussians with the appropriate variance. Thus, to establish that $B_t$ is a Brownian motion, it is enough to show that each finite linear combination of $B_t$ values is a (one-dimensional) centered Gaussian with the right variance.  The following proposition formalizes the analogous notion in the GFF context.  It is a standard and straightforward result about Gaussian processes (see \cite{Sh} for a proof in the zero boundary case; the free boundary case is identical):

\begin{proposition} \label{p.GFFdef}
The zero boundary GFF on $\H$ is the only random distribution
$h$ on $\H$ with the property that for each
$\density \in H_s(\H)$ the random variable $(h,\density)$
is a mean-zero Gaussian with variance given by \eqref{e.H0covariance} (with $\density_1=\density_2=\density$).
Similarly, the free boundary GFF is the only random modulo-additive-constant
distribution on $\H$ with the property that for each
$\density \in H_s(\H)$ with $\int_{\H} \density(z) dz = 0$ the random variable $(h,\density)$
is a mean-zero Gaussian with variance given by \eqref{e.Hfreecovariance}.
\end{proposition}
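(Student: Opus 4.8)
The plan is to treat existence and uniqueness separately; existence is essentially already assembled from the preceding subsections, and uniqueness is a short measure-theoretic argument. For existence in the zero boundary case, the construction $h=\sum_j \alpha_j f_j$ yields a random distribution, and for fixed $\density\in H_s(\H)$ the series $(h,\density)=\sum_j \alpha_j(f_j,\density)$ is a convergent sum of independent centered Gaussians, hence a centered Gaussian with variance $\sum_j (f_j,\density)^2$. Writing $f=-2\pi\Delta^{-1}\density\in H(\H)$ and expanding $f=\sum_j \beta_j f_j$, one has $(f_j,\density)=(f_j,f)_\nabla=\beta_j$ via \eqref{e.intbyparts}, so $\sum_j (f_j,\density)^2=\|f\|_\nabla^2=(\density,-2\pi\Delta^{-1}\density)$, which is exactly \eqref{e.greencovariance} with $\density_1=\density_2=\density$, i.e.\ the right-hand side of \eqref{e.H0covariance}. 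The free boundary case is identical, working modulo additive constants, restricting to $\density$ with $\int_\H \density=0$, and arriving at \eqref{e.Hfreecovariance} via $G^{\H_F}$ instead.

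For uniqueness, suppose $h$ is \emph{any} random distribution on $\H$ (in the free case, any modulo-additive-constant distribution) whose marginals $(h,\density)$ are centered Gaussian with the stated variances. The law of $h$ is a probability measure on $(\Omega,\mathcal F)$ with $\mathcal F$ generated by the evaluation maps $h\mapsto(h,\density)$, and the cylinder events $\{(h,\density_1)\in A_1,\dots,(h,\density_n)\in A_n\}$ form a $\pi$-system generating $\mathcal F$; hence it suffices to pin down the joint law of each finite vector $((h,\density_1),\dots,(h,\density_n))$. Here I would invoke the Cram\'er--Wold device: every linear combination $\sum_i c_i(h,\density_i)$ equals $(h,\sum_i c_i\density_i)$ with $\sum_i c_i\density_i$ again in the relevant test-function space (and still mean-zero in the free case), so by hypothesis it is a centered Gaussian whose variance is the explicit bilinear form of \eqref{e.H0covariance} (resp.\ \eqref{e.Hfreecovariance}) evaluated at $\sum_i c_i\density_i$. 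A random vector all of whose one-dimensional projections are centered Gaussian is jointly centered Gaussian, with covariance recovered from the projection variances by polarization; so the joint law of $((h,\density_1),\dots,(h,\density_n))$ --- and therefore the law of $h$ --- is determined.

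I expect the main obstacle to be bookkeeping rather than genuine analysis. One must be careful that the measurable framework is set up so that ``the law of a random distribution is determined by its finite-dimensional marginals'' is literally a theorem (this is why $\mathcal F$ must be taken to be the $\sigma$-algebra generated by the evaluation maps, and why a.s.\ continuity of $\density\mapsto(h,\density)$ on some topological space of test functions, discussed in Section \ref{ss.GFFascontinuousfunctional}, plays no role in the characterization itself, only in producing an a.s.\ continuous version). In the free boundary case one additionally has to carry the ``modulo additive constants'' qualifier through consistently --- the test functions are those $\density\in H_s(\H)$ with $\int_\H\density=0$, the Green's function is $G^{\H_F}$ (defined only against such $\density$), and linear combinations of such $\density$ remain in the space --- so that the Cram\'er--Wold step applies verbatim. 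Beyond that the argument is the one in \cite{Sh} for the zero boundary case, transferred without change, so the write-up should be brief.
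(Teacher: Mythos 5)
Your argument is correct and is essentially the route the paper itself takes (via the citation to \cite{Sh} and the Brownian-motion analogy sketched just before the statement): show the field's one-dimensional projections $(h,\density)$ are centered Gaussians with the stated variance, then note that by linearity and a Cram\'er--Wold/polarization argument this pins down all finite-dimensional marginals, which determine the law on the $\sigma$-algebra generated by the evaluation maps. Nothing further is needed.
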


In our proofs of Theorem \ref{forwardcoupling} and Theorem \ref{reversecoupling} in Section \ref{couplingsection}, we will first construct a random distribution in the manner prescribed by the theorem statement and then check the laws of the one dimensional projections (which determine the laws of the finite and countably infinite dimensional projections) to conclude by Proposition \ref{p.GFFdef} that it must be the GFF.

We remark that knowing $h$ as a distribution determines the values of $\alpha_j$ in a basis expansion $h = \sum_j \alpha_j f_j$, as long as the $- \Delta f_j$ are sufficiently smooth.  This in turn determines the value of $h_\eps(z)$ almost surely for a countable dense set of $\eps$ and $z$ values, which determines the values for all $\eps$ and $z$ by the almost sure continuity of $h_\eps(z)$ \cite{2008arXiv0808.1560D}.  This is convenient because it means that $h$, as a distribution, a.s.\ determines $(z,\eps) \to h_\eps(z)$ as a function, which in turn determines $\mu_h$ and $\nu_h$.  (We could alternatively have defined $h_\eps(z)$ --- and hence $\mu_h$ and $\nu_h$ --- using weighted averages of $h$ defined by integrating against smooth bump functions on $B_\eps(z)$ instead of averages on $\partial B_\eps(z)$.  Though we won't do this here, one can easily construct measures this way that are almost surely equivalent to $\mu_h$ and $\nu_h$.)

\section{Coupling the GFF with forward and reverse SLE} \label{couplingsection}

\subsection{Proofs of coupling theorems} \label{subsec::couplingproofs}

This section will simultaneously prove Theorem \ref{forwardcoupling} and Theorem \ref{reversecoupling}.  It is instructive
to prove them together, and we will put the relevant calculations in tables, with those for the forward SLE coupling
of Theorem \ref{forwardcoupling} on the left side and those for the reverse SLE coupling of Theorem \ref{reversecoupling} on the right.

Now, using the language of stochastic differential equations and applying \Ito/'s formula in the case
$W_t = \sqrt{\kappa} B_t$, we compute the time derivatives of the four processes $f_t(z)$, $\log f_t(z)$, $f_t'(z)$, and $\log
f_t'(z)$ in both forward and reverse SLE settings.  Here $f_t'(z)$ denotes the spatial derivative $\frac{\partial}{\partial z} f_t$.  (Similar calculations appear in \cite{SchrammSheffieldGFF2} in the case $\kappa = 4$.)

\renewcommand{\arraystretch}{2}

\vspace{.2 in}
\begin{center}
\begin{tabular} {|l|l|}
\hline
{\bf FORWARD FLOW SLE} & {\bf REVERSE FLOW SLE} \\
\hline
$df_t(z) = \frac{2}{f_t(z)}dt - \sqrt{\kappa} dB_t$ & $df_t(z) = \frac{-2}{f_t(z)}dt - \sqrt{\kappa} dB_t$  \\
\hline
$d \log f_t(z)=\frac{(4-\kappa)}{2f_t(z)^2}dt - \frac{\sqrt{\kappa}}{f_t(z)}dB_t$ & $d \log f_t(z) =\frac{-(4+\kappa)}{2f_t(z)^2}dt - \frac{\sqrt{\kappa}}{f_t(z)}dB_t$ \\
\hline
$d f_t'(z) = \frac{-2f_t'(z)}{f_t(z)^2}dt$ & $d f_t'(z) = \frac{2f_t'(z)}{f_t(z)^2}dt$ \\
\hline
$d \log f_t'(z) = \frac{-2}{f_t(z)^2}dt$ & $d \log f_t'(z) = \frac{2}{f_t(z)^2}dt$ \\
\hline
\end{tabular}
\end{center}
\vspace{.2 in}

We next define the martingales $\h_t$ in both settings and compute their stochastic derivatives. The purpose of the stochastic calculus below is to show that  the quantities $(\h_t, \density)$ are continuous local martingales (the fact that they are martingales will become apparent later) and to explicitly computing their quadratic variations, so that they can be understood as Brownian motions subject to an explicit time change. Ultimately, we will use the properties of these Brownian motions to establish couplings between SLE and the Gaussian free field.

Note that while the two columns have differed only in signs until now, the definitions of $\h_t$ below will diverge in that one involves the imaginary and one the real part of $\h^*_t$.  We will write $\gamma := \sqrt{\min(\kappa, 16/\kappa)} \in [0,2]$.

\vspace{.2 in}
\begin{center}
\begin{tabular} {|l|l|}
\hline
{\bf FORWARD FLOW SLE} & {\bf REVERSE FLOW SLE} \\
\hline
$\chi:= \frac{2}{\sqrt{\kappa}} - \frac{\sqrt{\kappa}}{2}$ & $Q: = \frac{2}{\sqrt{\kappa}} + \frac{\sqrt{\kappa}}{2} = \frac{2}{\gamma} + \frac{\gamma}{2}$ \\
\hline
$\h^*_t(z) := \frac{-2}{\sqrt{\kappa}}  \log f_t(z) - \chi \log f_t'(z)$ & $\h^*_t(z) := \frac{2}{\sqrt{\kappa}}  \log f_t(z) + Q \log f_t'(z)$ \\
\hline
$d \h^*_t(z) =  \frac{2}{f_t(z)} dB_t $ & $d \h^*_t(z) =  \frac{-2}{f_t(z)} dB_t $ \\
\hline
$\h_t(z) := \Im \h^*_t(z) $ & $\h_t(z) := \Re \h^*_t(z) $ \\
\hline
$d \h_t(z) = \Im \frac{2}{f_t(z)} dB_t $ & $d \h_t(z) =  \Re \frac{-2}{f_t(z)} dB_t $ \\
\hline
\end{tabular}
\end{center}
\vspace{.2 in}

Before continuing with the calculation, we make several remarks.

\begin{remark}The form of $d\h_t(z)$ in the forward case is significant. At time
$t=0$, the function $-2\Im(f_t(z)^{-1})$ is simply $-2\Im (z^{-1})$.
This is a positive harmonic function whose level sets are circles in
$\H$ that are tangent to $\mathbb R$ at the origin.  It is a multiple of the so-called Poisson kernel, and it
is a derivative of the Green's function $G(y,z) = G^{\H_0}(y,z) = \log \left|
\frac{y-\bar z}{y-z} \right|$ in the following sense:
$$[\frac{\partial}{\partial s} G(is,z)]_{s=0} = \frac{\partial}{\partial s} \left|
\frac{z+is}{z-is} \right|_{s=0} = \Re \frac{2iz}{|z^2|} =  2\Im(z^{-1}).$$  Intuitively, the
value of $-2 \Im(f_t(z)^{-1})$ represents the harmonic measure of the tip of $\eta_t := \eta([0,t])$ as seen
from the point $z$.  Roughly speaking, as one makes observations of the GFF at points near the tip of $\eta_t$, the conditional expectation of $h$
goes up or down by multiples of this function. \end{remark}

\begin{remark} Also, in the forward case, $\h_0$ is the harmonic function on $\H$ with boundary
conditions $-2\pi/\sqrt{\kappa}$ on the negative real axis and $0$ on the
positive real axis.  We could have (for sake of symmetry) added a constant to $\h_0$ (and general $\h_t$) so that $\h_0$ is equal to $-\lambda$ on the negative real axis
and $\lambda$ on the positive real axis, where $\lambda := \pi/\sqrt{\kappa}$.
Observe that when $\kappa = 4$, we have $\chi =
0$ and hence each $\h_t$ would be the harmonic function on $\H \backslash
\eta_t$ with boundary conditions $-\lambda$ on the left side of the
tip of $\eta_t$ and $\lambda$ on the right side.  In this case, the $\lambda = \pi/2$
is the same (up to a $\sqrt{2\pi}$ factor stemming from a different choice of normalization for the
GFF) as the value $\lambda = \sqrt{\pi/8}$ obtained in \cite{SchrammSheffieldGFF2}.
\end{remark}

\begin{remark} In the reverse case, the expression for $d\h_t$ has $\Re \frac{-2}{f_t(z)}$ in place of $\Im \frac{2}{f_t(z)}$.  Intuitively, at time zero,
when one observes what $f_t$ looks like for small $t$, one learns something about the {\em difference} between $h$ just to the left of $0$ and
$h$ just to the right of $0$.  (It is this difference that determines the ratio of the $\nu_h$ densities to the left and to the right of zero, which is what determines how the zipping-up should behave in the short term.)
The conditional expectation of $h$ thus changes by a small multiple of $\Re \frac{2}{f_t(z)}$, which is negative
on one side of the imaginary axis and positive on the other side.  Unlike $\Im \frac{2}{f_t(z)}$, the function $\Re \frac{2}{f_t(z)}$ is non-zero on $\R$. \end{remark}

We use $\langle X_t, Y_t \rangle$ to denote cross variation between processes $X_t$ and $Y_t$ up to time $t$, so that $\langle X_t, X_t \rangle$ represents the quadratic variation of the process $X_t$ up to time $t$.  (The cross variation $\langle X_t, Y_t \rangle$ is also often written as $\langle X,Y \rangle_t$.)  In both forward and reverse flow settings, $\h_t(z)$ is a continuous local martingale for each fixed $z$ and is thus a Brownian motion under the quadratic variation parameterization, which we can give explicitly:

\vspace{.2 in}
\begin{center}
\begin{tabular} {|l|l|}
\hline
{\bf FORWARD FLOW SLE} & {\bf REVERSE FLOW SLE} \\
\hline
$C_t(z) := \log \Im f_t(z) - \Re \log f_t'(z)$ & $C_t(z) := - \log \Im f_t(z) - \Re \log f_t'(z)$ \\
\hline
$d\langle \h_t(z), \h_t(z) \rangle = -d C_t(z)$ & $d\langle \h_t(z), \h_t(z) \rangle = -d C_t(z)$ \\
\hline
\end{tabular}
\end{center}
\vspace{.2 in}

If $z$ is a point in a simply connected domain $D$, and $\phi$ conformally maps the unit disc to $D$, with $\phi(0) = z$, then we refer to the quantity $|\phi'(0)|$ as the {\em conformal radius} of $D$ viewed from $z$.  If, in the above definition of conformal radius, we replaced the unit disc with $\H$ and $0$ with $i$, this would only change the definition by an additive constant.  Thus, in the forward flow case, $C_t(z)$ is (up to an additive constant) the log of the conformal radius of $\H \setminus \eta([0,t])$ viewed from $z$.   In both cases $\h_t(z)$ is a Brownian motion when parameterized by the time parameter $-C_t(z)$ (which is increasing as a function of $t$).  The fact that $d\langle \h_t(z), \h_t(z) \rangle = -d C_t(z)$ may be computed directly via \Ito/'s formula but it is also easy to see by taking $y \to z$ in the formulas for $\langle \h_t(y), \h_t(z)\rangle$ and $-dG_t(y,z)$ that we will give below.

We will now show that weighted averages of $\h_t$ over multiple points in $\H$ are also continuous local martingales (and hence Brownian motions when properly parameterized).
The calculation will make use of the function $G(y,z)$, which we take to be the zero-boundary Green's function $G^{\H_0}(y,z)$ on $\H$ in the forward case and the free-boundary Green's function $G^{\H_F}(y,z)$ in the reverse case.

Now write $G_t(y,z) = G(f_t(y), f_t(z))$ in the reverse case.
In the forward case, write $G_t(y,z) = G(f_t(y), f_t(z))$ when $y$ and $z$ are both
in the infinite component of $\H \backslash \eta_t$ --- otherwise,
let $G_t(y,z)$ be the limiting value of $G_s(y,z)$ as $s$
approaches the first time at which one of $y$ or $z$ ceases to be in
this infinite component. The reader may check that for fixed $y$ and
$z$, this limit exists almost surely when $4 < \kappa < 8$: it is
equal to zero when $y$ and $z$ are in different connected components
of $\H \backslash \eta_t$, and when $y$ and $z$ lie in the same
component, it is simply the Green's function of $y$ and $z$ on this
bounded domain.  Now we let $\density$ be a smooth compactly supported function on $\H$ (which we will assume has mean zero in the reverse case) and do some more calculations.

\vspace{.2 in}
\begin{center}
\begin{tabular} {|l|l|}
\hline
{\bf FORWARD FLOW SLE} & {\bf REVERSE FLOW SLE} \\
\hline
$G(y,z) := \log|y-\bar z| - \log |y- z|$ & $G(y,z) := - \log |y-z| - \log |y-\bar z|$ \\
\hline
$G_t(y,z) := G(f_t(y), f_t(z))$ & $G_t(y,z) := G(f_t(y), f_t(z))$ \\
\hline
$dG_t(y,z) = -\Im \frac{2}{f_t(y)} \Im\frac{2}{f_t(z)} dt$ & $dG_t(y,z) = -\Re \frac{2}{f_t(y)} \Re\frac{2}{f_t(z)} dt$ \\
\hline
$d\langle \h_t(y), \h_t(z)\rangle = -dG_t(y,z)$ & $d\langle \h_t(y), \h_t(z)\rangle = -dG_t(y,z)$ \\
\hline
$E_t(\density) := \int_{\H} \density(y) G_t(y,z) \density(z) dydz$ & $E_t(\density) := \int_{\H} \density(y) G_t(y,z) \density(z) dydz$\\
\hline
$d\langle (\h_t,\density), (\h_t,\density)\rangle = -dE_t(\density)$ & $d\langle (\h_t,\density), (\h_t,\density)\rangle = -dE_t(\density)$ \\
\hline
\end{tabular}
\end{center}
\vspace{.2 in}

Each of the equations above comes from a straightforward \Ito/ calculation.  To explain their derivation, we begin by expanding the $dG_t$ computation in the forward case (the reverse case is similar):

{\allowdisplaybreaks
\begin{eqnarray*} d G_t(x,y) &=& - d\, \Re \log [f_t(x) - f_t(y)] + d\, \Re \log [f_t(x) -
\overline{f_t(y)}] \\
&=& - 2\,\Re \frac {f_t(x)^{-1} - f_t(y)^{-1}}{f_t(x) - f_t(y)}\,dt
+ {}
\\ & &\qquad 2
\,\Re \frac { f_t(x)^{-1} - \overline{f_t(y)^{-1}}}{f_t(x) - \overline{f_t(y)}}\,dt \\
&=& 2\, \Re\bigl( f_t(x)^{-1}f_t(y)^{-1}\bigr)\,dt  - 2\, \Re\bigl( f_t(x)^{-1} \bigl(\overline{f_t(y)}\bigr)^{-1}\bigr)\,dt \\ &=&  2\, \Re \bigl( i\, f_t(x)^{-1}\, \Im[2 f_t(y)^{-1}]\bigr) \,dt \\
&= & \,- \Im \frac{2}{f_t(x)}\, \Im \frac{2}{f_t(y)}\,dt\,.  \end{eqnarray*}}The fact that $d\langle \h_t(y), \h_t(z)\rangle = -dG_t(y,z)$ is then immediate from our calculation of $d\h_t$.

The fact that $d\langle (\h_t,\density), (\h_t,\density)\rangle = -dE_t(\density)$ is essentially a Fubini calculation but it requires some justification. First, we claim that the $(\h_t, \density)$ are continuous martingales.  We begin by considering $\h_t(z)$ for a fixed $z$ in the support of $\density$.  We have shown above that the quantity $\h_t(z)$ is a Brownian motion under a certain parameterization.  In the reverse case, the Loewner evolution gives that $|\frac{\partial}{\partial t}C_t(z)|$ is uniformly bounded above for $z$ in the support of $\density$ and for all times $t$.    (Note that $\Im f_t(z)$ is strictly increasing in $t$.)  This immediately implies that $\h_t(z)$ is a martingale (not merely a local martingale) because for each $z$ and $t$, $\h_t(z)$ represents the value of a Brownian motion stopped at a random time that is strictly less than a constant times $t$.  The fact that the expectation of $\h_t(z)$ --- given the filtration up to time $s<t$ --- is $\h_s(z)$ is then immediate from the optional stopping theorem.

In the forward case, one obtains something similar by noting that the law of the conformal radius $r$ of $z$ in $\H \setminus \eta([0,t])$ has a power law decay as $r \to 0$ --- i.e., the probability that $-C_t(z) > c$ decays exponentially in $c$, and is in fact bounded by an exponentially decaying function that is independent of $z$, for $z$ in the support of $\density$.  (A precise description of the law of the conformal radius at time infinity appears as the main construction in \cite{MR2491617}.)  This implies that $\h_t(z)$ is a Brownian motion stopped at a time whose law decays exponentially (uniformly over $z$ in the support of $\density$) which is again enough to apply the optional stopping theorem and conclude that $\h_t(z)$ is martingale.  In both cases, we obtain that for any $t$, the probability distribution function for $|\h_t(z)|$ decays exponentially fast, uniformly for $z$ in the support of $\density$.  In both cases, we also see that (for any fixed $t$), $\h_t(z)$ is an $L^1$ function of $z$ and the probability space, which allows us to use Fubini's theorem and conclude that the $(\h_t, \density)$ are martingales.

Let $L^p_{\mathrm{loc}}$ denote the set of $\psi$ for which the integral of $|\psi|^p$ over every compact subset of $\H$ is finite.  The exponential decay above implies that $\h_t$ is almost surely in $L^1_{\mathrm{loc}}$, since the expected integral of $|\h_t|$ over any compact set is finite.  (Note that we can define $\h_t$ arbitrarily on the measure zero set $\eta([0,t])$ without affecting the definition of $\h_t$ as an element of $L^1_{\mathrm{loc}}(\H)$.)  In fact, since $\mathbb E |\h_t(z)|^p$ is bounded uniformly for $z$ in a compact set, it follows that $\h_t$ is almost surely in $L^p_{\mathrm{loc}}(\H)$ for any $p \in (1, \infty)$.  The fact that $\h_t$ is almost surely in $L^1_{\mathrm{loc}}$ also implies that it can be understood as a random distribution on $\H$.

Moreover, \begin{equation} \label{e.supht} \sup_{s \in [0,t]} |\h_s(z)|\end{equation} also has, by Doob's inequality, a law that decays exponentially, uniformly in $z$.  Thus \eqref{e.supht} also belongs a.s.\ to $L^p_{\mathrm{loc}}(\H)$ for any $p \in (1, \infty)$.
From this and the a.s.\ continuity of SLE it follows that $(\h_t, \density)$ is a.s.\ continuous in $t$.  (This continuity is obvious in the reverse case; in the forward case, it is also obvious if one replaces $\density$ by $\density_\eps$, which we define to be zero on an $\eps$ neighborhood of $\eta$ and $\density$ elsewhere.  The fact that \eqref{e.supht} belongs to $L^p_{\mathrm{loc}}(\H)$ implies that the $(\h_t, \density_\eps)$ converge to $(\h_t, \density)$ uniformly, for almost all $\eta$, and a uniform limit of continuous functions is continuous.)

Now we can show $d\langle (\h_t,\density), (\h_t,\density)\rangle = -dE_t(\density)$, as noted in \cite{SchrammSheffieldGFF2}, either via a stochastic Fubini's theorem (see e.g.\ \cite[\S IV.4]{MR1037262}) or by using the following simpler approach proposed in private communication by Jason Miller.

\begin {figure}[htbp]
\begin {center}
\includegraphics [width=3.5in]{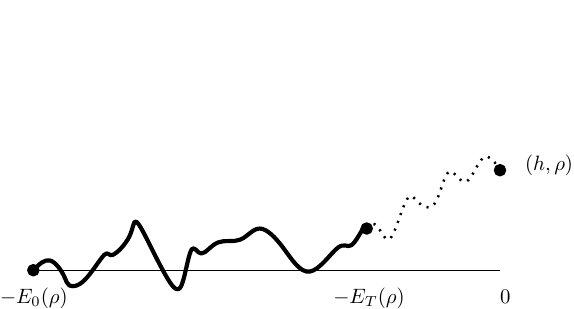}
\caption {\label{brownianenergyfig} The pair $\bigl(-E_t(\density), (\mathfrak h_t, \density) \bigr)$ traces the graph of a Brownian motion (solid curve) as $t$ ranges from $0$ to $T$.  Conditioned on this, the difference between $(h,\density)$ and $(\mathfrak h_T, \density)$ is a centered Gaussian of variance $E_T(\density)$.  Choosing  $(h,\density)$ to be $(\mathfrak h_T, \density)$ plus a Gaussian of this variance is equivalent to continuing the Brownian motion parameterized by $-E_t(\density)$ time (solid curve) all the way to time zero (dotted curve) and letting $(h,\density)$ be its value at time zero.}
\end {center}
\end {figure}

First note that $\langle (\h_t, \density_1), (\h_t, \density_2) \rangle$ is characterized by the fact that
$$(\h_t, \density_1)(\h_t, \density_2)- \langle (\h_t, \density_1), (\h_t, \density_2) \rangle$$
is a local martingale.  Thus it suffices for us to show that
\begin{equation}\label{e.intqvmartingale}
(\h_t, \density_1)(\h_t, \density_2) + \int \density_1(x)\, \density_2(y)\, G_t(x,y)\,dx\,dy
\end{equation}
is a martingale.  We know from the above calculations that $$\h_t(x)\h_t(y) + G_t(x,y)$$ is a martingale for fixed $x$ and $y$ in $\H$.  Since $G_t(x,y)$ is non-increasing and the $\h_t(z)$ have laws that decay exponentially, uniformly in $z$, we can use Fubini's theorem to conclude that \eqref{e.intqvmartingale} is a martingale.
Thus we have that $(\h_t,\density)$ is a Brownian motion when parameterized by time $-E_t(\density)$.  To complete the proofs of Theorems \ref{forwardcoupling} and \ref{reversecoupling}, recall that in the theorem statements $\widetilde h$ denotes an instance of the free boundary GFF on $\H$, and note that
since each $(\h_T + \widetilde h \circ f_T, \density)$ is a sum
of a standard Brownian motion stopped at time $E_0(\density) - E_{T}(\density)$ and a conditionally independent Gaussian of variance $E_{T}(\density)$, it has the same law as a Gaussian of variance $E_0(\density)$ and mean $(\h_0, \density)$. (See Figure \ref{brownianenergyfig}.)  For future reference, we note that in the reverse flow case one may integrate the expression for $d\h_t(z)$ above to find (using the stochastic Fubini's theorem) that
\begin{equation} \label{e.htdensity}
d(\h_t, \density) = \bigl(-2 \Re (f_t)^{-1} , \density\bigr) dB_t.
\end{equation}

\begin{remark} The statement of Theorem \ref{forwardcoupling} excluded the case $\kappa \geq 8$, since \SLEk/ is space-filling in that case and $\h_t$ cannot be defined as a function almost everywhere.  Nonetheless, we may still define $(\h_t, \density)$ to be the solution to the stochastic differential equation $d(\h_t, \density) = \bigl(- 2 \Im (f_t)^{-1}, \density \bigr) dB_t$.  In this case, the calculations above
again yield that
$d\langle (\h_t,\density), (\h_t,\density)\rangle = -dE_t(\density)$, which as before implies that $(\h_T + \widetilde h \circ f_T, \density)$ and $(\h_0 + \widetilde h, \density)$ agree in law for each $\density$, just as in the $\kappa < 8$ case, which yields a $\kappa \geq 8$ analog of Theorem \ref{forwardcoupling}.  (Figure \ref{brownianenergyfig} still makes sense then $\kappa \geq 8$.) \end{remark}

It will be useful for later purposes to note that (at least in the reverse SLE case) the graph in Figure \ref{brownianenergyfig} actually uniquely determines (and is uniquely determined by) the process $W_t = \sqrt \kappa B_t$ almost surely, see Figure \ref{brownianenergy2fig}.  This is a special case of a much more general theorem about stochastic processes (see Chapter IX, Theorem 2.1 of \cite{MR2000h:60050} --- it suffices that $(\mathfrak h_t, \density)$ satisfies an SDE in $W_t$ with a diffusive coefficient that remains strictly bounded away from zero and infinity, at least as long as we stop at any time strictly before $t=\infty$).  This means that the evolution of $\eta$ can be described by the Brownian motion in Figure \ref{brownianenergyfig}, as well as by the Brownian motion $B_t$.  Context will determine which description is more convenient to work with.

\begin {figure}[htbp]
\begin {center}
\includegraphics [width=5.4in]{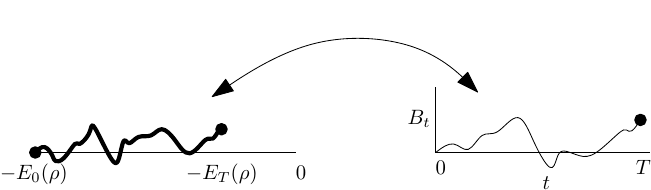}
\caption {\label{brownianenergy2fig} The graph traced by $\bigl(-E_t(\density), (\mathfrak h_t, \density) \bigr)$  as $t$ ranges from $0$ to $T$ (left) and the graph traced by $(t,B_t)$ (right), where $W_t = \sqrt \kappa B_t$.  The left graph uniquely determines the right graph, and vice versa, almost surely.  Each has the law of a standard Brownian motion (up to a stopping time).}
\end {center}
\end {figure}

\subsection{Alternative underlying geometries and SLE$_{\kappa, \rho}$} \label{ss.slekr}

Both Theorems \ref{forwardcoupling} and \ref{reversecoupling} can be generalized to other values of $\h_0$ using the so-called SLE$_{\kappa, \rho}$ processes.  (As discussed at the end of Section \ref{ss.randomgeometryGFF}, changing $\h_0$ can be interpreted as changing the underlying geometry on which Liouville quantum gravity is defined.)
We
generalize the latter here (see \cite{MR2525778} for the former).  We take $G = G^{\H_F}$ in the following.  Slightly abusing notation, we will consider situations where $\density(y)dy$ represents a general signed measure (instead of requiring that $\density$ be a smooth test function).  In this case $(F,\density) = \int F(y) \density(y)dy$ represents integration of $F$ w.r.t.\ this measure.

\begin{theorem} \label{reversecouplingkapparho}
Fix $\kappa > 0$ and a signed measure $\density(y)dy$ on $\overline \H$ with finite positive and finite negative mass supported on some closed $\mathcal C \subset \mathcal \H$.  Write
\begin{equation} \label{e.hatht} \hat \h_t(z) =  \h_t(z) + \frac{1}{2\sqrt \kappa}\int G_t(y,z) \density(y)dy,\end{equation}
where $\h_t(z)$ is as in Theorem \ref{reversecoupling}, and let $\widetilde h$ be an instance of the
free boundary GFF on $\H$, independent of $B_t$.   Let $\eta_T$ be the segment generated by the reverse Loewner flow
\begin{equation} \label{e.dWkapparho1} d f_t(z)  =  \frac{-2}{f_t(z)}dt - dW_t,\end{equation}
where \begin{equation} \label{e.dWkapparho} dW_t = \Bigl(\int \Re \frac{-1}{f_t(y)} \density(y)dy \Bigr) dt + \sqrt{\kappa}dB_t = \bigl( -\Re (f_t)^{-1}, \density \bigr)dt + \sqrt{\kappa}dB_t,\end{equation}
up to any stopping time $T \geq 0$ at or before the smallest $t$ for which $0 \in f_t(\mathcal C)$ (here $f_t$ is extended continuously from $\H$ to $\overline \H$).  Then the following two random distributions (modulo additive constants) on $\H$ agree in law: $h = \hat \h_0 + \widetilde h$ and $\hat \h_T + \widetilde h \circ f_T$.
\end{theorem}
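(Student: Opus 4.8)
The plan is to follow the proof of Theorem~\ref{reversecoupling} in Section~\ref{couplingsection} essentially verbatim; the one new ingredient is the observation that the term $\frac{1}{2\sqrt\kappa}\int G_t(y,z)\density(y)\,dy$ added in \eqref{e.hatht} is exactly the compensator that absorbs the extra drift that \eqref{e.dWkapparho} introduces into the Loewner flow, so that $\hat\h_t(z)$ ends up being a local martingale of the same form as the $\h_t(z)$ there. Throughout I write $\phi$ for a smooth compactly supported test function on $\H$ with $\int_\H\phi=0$, and take $G=G^{\H_F}$ and $E_t(\phi)=\int\phi(y)G_t(y,z)\phi(z)\,dy\,dz$ as in the reverse column of the tables in Section~\ref{couplingsection}.

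First I would redo the It\^o calculation for $\h^*_t(z)=\frac{2}{\sqrt\kappa}\log f_t(z)+Q\log f_t'(z)$, now with $dW_t=b_t\,dt+\sqrt\kappa\,dB_t$ and $b_t:=\bigl(-\Re(f_t)^{-1},\density\bigr)$. Since $W_t$ does not depend on $z$, the evolution of $\log f_t'(z)$ is unchanged and $d\log f_t(z)$ merely picks up the extra term $-b_t/f_t(z)\,dt$; the $f_t(z)^{-2}\,dt$ contributions cancel exactly as before (same identity $\frac{2}{\sqrt\kappa}\cdot\frac{-(4+\kappa)}{2}+2Q=0$), leaving $d\h_t(z)=\Re\,d\h^*_t(z)=-\frac{2b_t}{\sqrt\kappa}\bigl(\Re f_t(z)^{-1}\bigr)\,dt+\Re\frac{-2}{f_t(z)}\,dB_t$. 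Because the $b_t\,dt$ and $\sqrt\kappa\,dB_t$ parts of $dW_t$ are common to all sample points, the derivation of $dG_t(y,z)=-\Re\frac{2}{f_t(y)}\Re\frac{2}{f_t(z)}\,dt$ is unaffected by the drift (it cancels in $d(f_t(y)-f_t(z))$ and in $d(f_t(y)-\overline{f_t(z)})$, both of which remain of finite variation). Integrating this against $\density$ and using $\int\Re f_t(y)^{-1}\density(y)\,dy=-b_t$ gives $d\bigl(\frac{1}{2\sqrt\kappa}\int G_t(y,z)\density(y)\,dy\bigr)=\frac{2b_t}{\sqrt\kappa}\bigl(\Re f_t(z)^{-1}\bigr)\,dt$, which cancels the drift of $d\h_t(z)$ precisely. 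Hence $d\hat\h_t(z)=\Re\frac{-2}{f_t(z)}\,dB_t$, which is exactly the form appearing in Theorem~\ref{reversecoupling}.

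From this point on everything is as in the reverse case of Section~\ref{couplingsection}: $d\langle\hat\h_t(y),\hat\h_t(z)\rangle=-dG_t(y,z)$, hence $(\hat\h_t,\phi)$ is a continuous local martingale with $d\langle(\hat\h_t,\phi),(\hat\h_t,\phi)\rangle=-dE_t(\phi)$; the reverse-flow bounds (uniform control of $|\partial_t C_t(z)|$, monotonicity of $\Im f_t(z)$, the resulting exponential tails for $|\hat\h_t(z)|$ and for $\sup_{s\le t}|\hat\h_s(z)|$, Doob's inequality and Fubini) upgrade $(\hat\h_t,\phi)$ to a genuine martingale that is a Brownian motion in the clock $-E_t(\phi)$ and show $\hat\h_t\in L^p_{\mathrm{loc}}(\H)$ a.s.\ for every $p<\infty$. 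Running the argument of Figure~\ref{brownianenergyfig} with the stopping time $T$ in place of a fixed time, $(\hat\h_T+\widetilde h\circ f_T,\phi)$ is a Brownian motion run for time $E_0(\phi)-E_T(\phi)$ plus a conditionally independent Gaussian of variance $E_T(\phi)$, hence a Gaussian of mean $(\hat\h_0,\phi)$ and variance $E_0(\phi)$ --- the same law as $(\hat\h_0+\widetilde h,\phi)$. Since this holds for every such $\phi$, Proposition~\ref{p.GFFdef} yields the asserted equality in law modulo additive constant of $\hat\h_T+\widetilde h\circ f_T$ and $h=\hat\h_0+\widetilde h$.

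The step I expect to be the main obstacle is not the (short) drift cancellation but the handling of the general stopping time $T$ together with a possibly large drift $b_t$ and a $\density$ allowed to charge points of $\overline\H$ arbitrarily close to $\R$. Before the first time $0\in f_t(\mathcal C)$ the map $f_t$ (extended continuously to $\overline\H$) stays bounded away from $0$ uniformly over the compact set $\mathcal C$, so $b_t$ and the drift coefficient in \eqref{e.dWkapparho1} are locally bounded, the system \eqref{e.dWkapparho1}--\eqref{e.dWkapparho} has a strong solution up to $T$, and the reverse-flow estimates on $C_t$ and on conformal radii remain uniform up to any such $T$; this is what allows the optional-stopping and stochastic-Fubini steps above to be run with the (possibly random) $T$, and what guarantees that $E_0(\phi)-E_T(\phi)$ and $E_T(\phi)$ are finite so that the conditional Gaussian decomposition is valid on $\{T<\infty\}$. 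A careful write-up would spell out these uniform bounds and confirm that the measure-zero and $L^1_{\mathrm{loc}}$ statements about $\eta_T$ and $\hat\h_T$ used implicitly in the conclusion carry over unchanged.
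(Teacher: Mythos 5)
Your proposal is correct and follows essentially the same route as the paper: the paper's proof of Theorem \ref{reversecouplingkapparho} is precisely the It\^o computation showing that the extra drift term in $d\,\frac{2}{\sqrt\kappa}\Re\log f_t(z)$ coming from \eqref{e.dWkapparho} is cancelled by $d\,\frac{1}{2\sqrt\kappa}\int G_t(y,z)\density(y)dy$, so that $d\hat\h_t(z)=\Re\frac{-2}{f_t(z)}dB_t$, after which the remaining steps are carried over verbatim from Theorem \ref{reversecoupling}. Your additional remarks on the stopping time $T$ and the local boundedness of the drift before $0\in f_t(\mathcal C)$ only fill in details the paper leaves implicit (its informal weighting/complete-the-square discussion is given as motivation, not as the proof).
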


We will make several observations before we prove Theorem \ref{reversecouplingkapparho}.  First, if $\density$ is supported on a set of $n$ points $y_1, \ldots, y_n$ in $\H$, with masses given by real numbers $\rho_1, \rho_2, \ldots, \rho_n$, then the process defined by \eqref{e.dWkapparho1} and \eqref{e.dWkapparho} is (the reverse form of) what is commonly called an SLE$_{\kappa, \rho}$ process in the literature: in this case, \eqref{e.dWkapparho} takes the form
\begin{equation} \label{e.finitesupportslekr} dW_t = \sum_{i=1}^n \Re \frac{-\density_i}{f_t(y_i)} dt + \sqrt{\kappa}dB_t,\end{equation}
which is the same expression one finds in the usual definition of the forward-flow SLE$_{\kappa, \rho}$ process, as in \cite{MR2188260}.  (Note that the notation here differs from  \cite{MR2188260}, since here we use $\density$ to denote the measure, not the vector of mass values $\density_i$.)  In the special case that $\density$ is supported at a single point $x \in \R$, with mass $\rho_1$ we find that
\begin{equation} \label{e.singleforce} d f_t(x) = \frac{-2}{f_t(x)}dt - dW_t = \frac{-2 + \rho_1}{f_t(x)}dt  - \sqrt{\kappa}dB_t,\end{equation}
so that $f_t(x)/\sqrt{\kappa}$ is a Bessel process of dimension $\delta$ satisfying $(\delta-1)/2 = (\rho_1-2)/\kappa$, i.e., \begin{equation} \label{e.besseldim} \delta = 1 + \frac{2(\rho_1-2)}{\kappa}. \end{equation}  For this and future discussion it will be useful to recall a few standard facts:
 \begin{enumerate} \item The Bessel process $X_t$ of dimension $\delta$ by definition satisfies $dX_t = dB_t + \frac{\delta-1}{2} X_t^{-1}dt$.  Hence $d \log X_t = \frac{1}{X_t}dB_t + \frac{\delta-1}{2X_t^2}dt - \frac{1}{2X_t^2} dt$.  The process $\log X_t$, when parameterized by its quadratic variation, is a Brownian motion with a constant drift of magnitude $\frac{\delta-2}{2}$.
 \item If $X_t$ is a Bessel process of dimension $\delta$ started at $X_0 = x$ and run until the first time $T$ that it reaches zero, then the time reversal $X_{T-t}$ has the law of a Bessel process of dimension $\delta'$, started at zero and run until the last time that it hits $x$, where $\delta'$ is the dimension one gets by changing the sign of the drift in $\log X_t$.  That is, $\frac{\delta-2}{2} = -\frac{\delta'-2}{2}$, so that \begin{equation} \label{e.delta'} \delta = 4-\delta'.
     \end{equation}
 \item In the usual forward flow definition of SLE$_{\kappa, \rho_1'}$ the function $f_t(x)$ is a Bessel process of dimension \begin{equation} \label{e.forwardbesseldimension} \delta' = 1 + \frac{2 (\rho_1'+2)}{\kappa}. \end{equation} The reason for the difference from \eqref{e.besseldim} can be seen by considering the case $\rho_1 = \rho_1' = 0$.  In the reverse process, the Loewner drift is pulling $f_t(x)$ toward the origin, while in the forward process the Loewner drift is pushing $f_t(x)$ away from the origin.  In both cases $\rho_1$ (or $\rho_1'$) indicates a quantity of additional force pushing $f_t(x)$ away from the origin.
 \item Combining \eqref{e.besseldim},  \eqref{e.delta'}, and \eqref{e.forwardbesseldimension} gives a relationship between $\rho_1$ and $\rho_1'$.  Namely, $1 + \frac{2 (\rho_1'+2)}{\kappa} = 4 - ( 1 + \frac{2(\rho_1-2)}{\kappa}),$ so that \begin{equation} \label{e.rhorho'} \rho_1' = \kappa - \rho_1.\end{equation}  This means that if we run a reverse SLE$_{\kappa, \rho_1'}$ until the time $T$ at which $f_t(x)$ hits zero, then $f_T$ maps $\H$ to $\H \setminus \eta_T$ where $\eta_T$ has the law of an initial segment of a forward SLE$_{\kappa, \rho_1}$.  In particular, if $\rho_1' = \kappa$, then $\eta_T$ has the law of an ordinary SLE stopped at a time $T$ (which corresponds to the last time that a Bessel process hits a certain value).  This will be important later.
 \end{enumerate}

Recall from Section \ref{ss.randomgeometryGFF} that changing $\h_0$ to $\hat \h_0$ can be interpreted as changing the underlying geometry on which Liouville quantum gravity is defined.  Moreover, $\density$ is proportional to $-\Delta (\hat \h_0 - \h_0)$, and $-\Delta \hat \h_0$ is proportional to the overall Gaussian curvature density (see the appendix).

We will give a formal proof of Theorem \ref{reversecouplingkapparho} below using \Ito/ calculus, but first let us offer an informal explanation of why the result is true.  The idea behind Theorem \ref{reversecouplingkapparho} is to interpret \eqref{e.hatht} as the expectation of $h$ in a certain weighted measure and \eqref{e.dWkapparho} as the description of the law of $W_t$ in that measure. This is easiest to understand when we first switch coordinates using the correspondence shown in Figure \ref{brownianenergy2fig}.  Suppose first that $\density$ is such that \eqref{e.Hfreecovariance} is finite with $\density_1 = \density_2 = \density$ and that the total integral of $\density$ is zero.  If $dh$ is the law of a (centered or not centered) GFF then the standard Gaussian complete-the-square argument shows that $e^{(h, \density)}dh = e^{(h, -2\pi\Delta^{-1} \density)_\nabla}dh$ (normalized to be
a probability measure) is the law of the standard GFF plus $-2\pi\Delta^{-1} \density$.  When we weight the law of the Brownian motion in Figure \ref{brownianenergyfig} by $e^{\alpha(h, \density)}$ for some constant $\alpha$ (note that $(h,\density)$ is the terminal value that the Brownian motion in that figure reaches at time zero) this is equivalent to adding a constant drift term to the Brownian motion (parameterized by $-E_t(\density)$) in Figure \ref{brownianenergyfig}.

We take $\alpha = \frac{1}{2 \sqrt \kappa}$ and weight by \begin{equation} \label{e.densityweight} e^{(h, \frac{1}{2 \sqrt \kappa}\density)}, \end{equation} which, as explained above, modifies the law in a way that amounts to adding the drift term of $\frac{1}{2 \sqrt \kappa} \bigl(E_0(\density) - E_t(\density)\bigr)$ to the Brownian motion in Figure \ref{brownianenergyfig}.  Recalling the correspondence shown in Figure \ref{brownianenergy2fig}, the fact that the left figure is a Brownian motion with this constant drift (up to a stopping time) completely determines the law of $W_t$ up to that stopping time.  Indeed, recalling \eqref{e.htdensity}, we find that the law of $W_t$ is necessarily the one described by \eqref{e.dWkapparho}.

We have now related the weighted measure to \eqref{e.dWkapparho}, but what does this have to do with \eqref{e.hatht}?  Observe that $$(\hat \h_t, \density) = (\h_t, \density) + \frac{1}{2 \sqrt \kappa} E_t(\density)$$ represents the conditional expectation (in the weighted measure) of $(h, \density)$ given $B_\cdot$ up to time $t$.  In fact, by the standard complete-the-square argument, the function $\hat \h_t$ in Theorem \ref{reversecouplingkapparho} represents the conditional expectation (in the weighted measure) of $h$, given $B_\cdot$ up to time $t$, and is thus a martingale in $t$.  The above construction (and a bit of thought) actually constitutes a proof of Theorem \ref{reversecouplingkapparho} when \eqref{e.Hfreecovariance} is finite and the total integral of $\density$ is zero.

The argument above can be adapted to more general $\density$.  If the total integral of $\density$ is not zero, we may modify $\density$ by adding some mass very far from the origin, so that the total integral becomes zero but the drift in \eqref{e.dWkapparho} does not change very much.  If \eqref{e.Hfreecovariance} is infinite, we may be able to modify it to make it finite: for example, if $\density$ is a point mass, then we may replace it with a uniform measure on a tiny ball centered at that point mass, and the harmonicity of $G_t(\cdot, z)$ in \eqref{e.hatht} and of $\Re(f_t)^{-1}$ in \eqref{e.dWkapparho} show that (outside of this small ball) neither \eqref{e.hatht} nor \eqref{e.dWkapparho} is affected by this replacement.
We will present a more direct \Ito/ calculation below, which also applies when \eqref{e.Hfreecovariance} is infinite.

\proofof{Theorem \ref{reversecouplingkapparho}}
We will follow the calculations of Theorem \ref{reversecoupling} and check where differences appear.
First, we find the following:
\vspace{.2 in}
\begin{center}
\begin{tabular} {|l|}
\hline
{\bf REVERSE FLOW SLE} \\
\hline
$df_t(z) = \frac{-2}{f_t(z)}dt +  \Bigl( \int \Re \frac{1}{f_t(y)} \density(y)dy \Bigr) dt - \sqrt{\kappa} dB_t$  \\
\hline
$d \log f_t(z) =\frac{-(4+\kappa)}{2f_t(z)^2}dt + f_t(z)^{-1} \Bigl(\int \Re \frac{1}{f_t(y)} \density(y)dy \Bigr) dt - \frac{\sqrt{\kappa}}{f_t(z)}dB_t$ \\
\hline
$d f_t'(z) = \frac{2f_t'(z)}{f_t(z)^2}dt$ \\
\hline
$d \log f_t'(z) = \frac{2}{f_t(z)^2}dt$ \\
\hline
\end{tabular}
\end{center}
\vspace{.2 in}
Also, as before, we compute
$$dG_t(y,z) =-\Re \frac{2}{f_t(y)} \Re\frac{2}{f_t(z)} dt ,$$
and recall that
$$\hat \h_t(z) := \frac{2}{\sqrt{\kappa}}  \log|f_t(z)| + Q \log |f_t'(z)| + \frac{1}{2\sqrt \kappa}\int G_t(y,z) \density(y)dy.$$
We then find that when computing $d\hat \h_t(z)$ the extra term in $d \frac{2}{\sqrt{\kappa}} \Re \log f_t(z)$ cancels the term $d\frac{1}{2\sqrt \kappa}\int G_t(y,z) \density(y)dy$
so that $d \hat \h_t(z) =  \Re \frac{-2}{f_t(z)} dB_t $, just as in the proof of Theorem \ref{reversecoupling}.  The remaining calculations are the same as in the proof of Theorem \ref{reversecoupling}.
\qed
\vspace{.1in}

We next remark, in the context of Theorem \ref{reversecouplingkapparho}, that if $(\H, \hat \h_T + \widetilde h \circ f_T)$ is a quantum surface, then
\begin{equation} \label{e.hatht2} f_T(\H, \hat \h_T + \widetilde h \circ f_T) = f_T(\H, \h_T + \frac{1}{2\sqrt \kappa}\int G_T(y,\cdot) \density(y)dy + \widetilde h \circ f_T),\end{equation}
and this can be written
\begin{equation} \label{e.ftkapparhosurface}  (\H \setminus K_T, \h_0 + \frac{1}{2\sqrt \kappa}\int G(f_T(y),\cdot) \density(y)dy + \widetilde h). \end{equation}
When $\kappa < 4$, this suggests the following interpretation of Theorem \ref{reversecouplingkapparho}.  We start with $\hat \h_0 + \widetilde h$, which is actually only defined up to additive constant, so it determines a quantum surface up to a multiplicative constant.  We zip up this (modulo multiplicative constant) quantum surface until a stopping time $T$, and condition on the zipper map $f_T$.  Then the conditional law of the new zipped-up quantum surface (which is also defined only up to a multiplicative constant) is the same as the original law except that $\density$ is replaced by the $f_T$ image of $\density$.  We have not fully established that this interpretation is correct, because we have not yet shown that $\hat \h_0 + \widetilde h$ uniquely determines $f_T$.  As in Theorem \ref{reversecoupling}, we have only shown that sampling $h$ from $\hat \h_0 + \widetilde h$ is equivalent to first zipping up according to a given law, then sampling the field from a putative conditional law in the zipped up picture, and then unzipping.

\section{Quantum zippers and conformal welding} \label{s.zipper}

\subsection{Overview of zipper proofs}
The goal of this section is to prove the statements in Section \ref{s.intro} that we have not yet proved: namely,  Theorem \ref{conformalwelding} (which, by the a.s.\ removability of the SLE paths, implies Theorem \ref{conformalweldingunique} and Corollary \ref{stationaryzipping}, as explained in Section \ref{s.intro}) and Theorem \ref{t.lengthstationary}.  Both proofs will be completed in Section \ref{ss.zipperconlusion}.

Given what we know now, Theorem \ref{conformalwelding} may not seem surprising.  In light of Theorem \ref{reversecoupling}, we know that, given the independent pair $(h,\eta)$ described there, the quantum boundary lengths along the left and right sides of $\eta$ are both a.s.\ well defined.  Indeed, recall that to measure the $\nu_h$ length along the left side of $\eta([0,t])$, we may ``unzip'' via $f^\eta_t$ --- and the transformation rule \eqref{Qmap} --- so that $\eta([0,t])$ maps to an interval of $\R$, and then measure the quantum length of that interval.  We only need to show that the quantum boundary measure of $\eta$ measured from the left agrees with the quantum boundary measure of $\eta$ measured from the right.

We already have some information about how these measures depend on $h$.  For example, it is immediate from the definition of $\nu_h$ that if we change $h$ --- by adding a smooth function to $h$ that is equal to a constant $C$ in a region $A \subset \H$ --- then this has the effect of multiplying the length of $\eta([0,t]) \cap A$ (as measured from both left and right sides) by $e^{C\gamma/2}$.  More generally, if $\nu_h$ is the measure from one of the two sides (viewed as a measure on $\overline \H$, which happens to be supported along $\eta \cup \R$) and $\phi: \overline \H \to \R$ is smooth, then \begin{equation} \label{e.nuhplusphi} \nu_{h + \phi} = e^{\frac{\gamma}{2} \phi} \nu_h,\end{equation} (i.e., the measure whose Radon-Nikodym derivative w.r.t.\ $\nu_h$ is $e^{\frac{\gamma}{2} \phi}$).  This implies that the left and right measures depend on $h$ in a similar way, but it does not show that they agree.

One possible way to prove agreement might be to suppose otherwise for contradiction.  Then if we cover $\eta$ with a lot of small balls of comparable quantum size, we will find that in some of the balls the quantum length of $\eta$ is greater measured from the left side and in some greater from the right side.    Still, we would expect some long range near-independence and a law of large numbers to show that all of these differences average out when the balls are small enough; taking limits as the balls get smaller should show that the lengths on the two sides are in fact equal.
(A related argument based was used in \cite{MR2486487} to prove the so-called height gap lemma for discrete Gaussian free fields.)

An alternative approach to proving Theorem \ref{conformalwelding} would be to try to show that both the left and right boundary measures give ``quantum length'' measures of the curve that correspond to the quantum analog (as in \cite{DS3}) of the natural time parameterization constructed via the Doob-Meyer decomposition in \cite{2009arXiv0906.3804L, 2010arXiv1006.4936L}.  The uniqueness arguments introduced in \cite{2009arXiv0906.3804L} could then be used to show that these two measures must agree.

While both of the above approaches appear viable, we will actually prove Theorem \ref{conformalwelding} with a third approach, which we feel is instructive.  Namely, we will first construct the invariant measure described by Theorem \ref{t.lengthstationary}, in the manner outlined in Figure \ref{triplezoom}, and then use symmetries and the ergodic theorem to deduce that the quantum measures on the two sides of the path are almost surely equal.  Before doing this, we establish some results of independent interest. Section \ref{ss.capstataddconst} describes a $T=\infty$ analog of Theorem \ref{reversecoupling} and Section \ref{ss.reparameterization} describes an interesting space-time symmetry: by changing the underlying geometry of the space used to define Liouville quantum gravity, one may construct a quantum zipper that is invariant with respect to a modified notion of capacity time.

\subsection{Zipping up ``all the way'' and capacity stationarity} \label{ss.capstataddconst}

Let $\Gamma^0$ be the law of the pair $(h, \eta)$ in which $h$ is $\frac{2}{\sqrt{\kappa}}  \log|z| + \widetilde h$ (with $\widetilde h$ a free boundary GFF on $\H$, defined modulo additive constant) and $\eta$ is an independent SLE$_\kappa$.  This is the measure that appears in Theorem \ref{reversecoupling} and Corollary \ref{stationaryzipping}.  Note that the space of $\bigl( (D_1,h^{D_1}), (D_2, h^{D_2}) \bigr)$ configurations is in one to one correspondence with the space of pairs $(h, \eta)$ on a full $\Gamma^0$ measure set, so we can view the $\zcap_t$ of Corollary \ref{stationaryzipping} as acting on the pair $(h, \eta)$ and we will denote the thus transformed pair by $\zcap_t(h, \eta)$.  By Theorem \ref{reversecoupling}, the $\zcap_t$ of Corollary \ref{stationaryzipping} with $t < 0$ (the ``unzipping'' direction) are measure preserving transformations of $\Gamma^0$.

Using Theorem \ref{reversecoupling} alone, we do not yet have a definition of $\zcap_t$ for $t>0$ (the ``zipping up'' direction).  However, we can construct a stationary process $(h^t, \eta^t)$, defined for all $t$, such that whenever $t<0$ and $s \in \R$ we have \begin{equation} \label{hst} (h^{s+t}, \eta^{s+t}) = \zcap_t(h^s,\eta^s).\end{equation}  To construct this process, note that once we choose $(h^T,\eta^T)$ for some large fixed constant $T$ from $\Gamma^0$, the evolution rule \eqref{hst} determines $(h^t,\eta^t)$ for all $t<T$.  (The fact that the unzipping maps preserve $\Gamma^0$ implies that for each fixed $t < T$, the pair $(h^t, \eta^t)$ also has the law of $\Gamma^0$.) Taking a limit as $T \to +\infty$ (using Kolmogorov's consistency theorem), we obtain a process defined for all $t \in \R$.  Denote by $\Gamma$ the law of this process.  We frequently write $h = h^0$ and $\eta = \eta^0$.

For the remainder of Section \ref{s.zipper}, we will use $f_t$ to denote ``zipping down'' maps when $t < 0$ and ``zipping up'' maps when $t>0$.  Once Corollary \ref{stationaryzipping} is established, this will amount to defining $f_t$ for all $t \in \R$ (using the notation of Corollary \ref{stationaryzipping}) as
\begin{equation} \label{e.ftalltimedef} f_t = \begin{cases} f_t^h & t \geq 0 \\  f_{-t}^\eta & t \leq 0 \end{cases}.\end{equation}
(We stress that this contrasts with earlier notation, where we defined $f_t$ only for $t > 0$ and --- to highlight similarities --- used $f_t$ to describe both forward Loewner evolution in the AC-geometry context and reverse Loewner evolution in the Liouville quantum gravity context.)  However, using Theorem \ref{reversecoupling} alone, it is not a priori clear that $h$ determines the map $f^h_t$ in \eqref{e.ftalltimedef}.  Even so, it {\em is} clear that these maps are determined by the process $(h^t, \eta^t)$ with the law $\Gamma$ constructed above, so for now we will define $f_t$ to be the maps determined by process $(h^t, \eta^t)$.

Now compare \eqref{e.floewner} and \eqref{e.rloewner} and note that the Brownian motion corresponding to the forward Loewner evolution of an SLE segment $\eta_T$ (the restriction of a Brownian motion to an interval of time) is (up to additive constant) the time reversal of the Brownian motion corresponding to the reverse Loewner evolution $f_t$ for the same curve.  Indeed, if we write $\widetilde B_t = B_{T-t}$ and $\widetilde f_t = f_{T-t}$, then \eqref{e.rloewner} holds for a general pair $B_t$ and $f_t$ (indexed by $t \in [0,T]$) precisely when \eqref{e.floewner} holds for $\widetilde B_t$ and $\widetilde f_t$.  The $f_t$ map is driven by a Brownian motion in the $t<0$ ``unzipping'' direction (since the curve that it unzips is an SLE curve) and from the definition of $f_t$ for general times (which involved starting at stationarity for some large $T$ and unzipping from there to get $f_t$ for $t<T$, taking the limit as $T \to \infty$) we see that we may define a standard Brownian motion $B_t$ for all times $t$ so that $B_0 = 0$ and $f_t$ satisfies \eqref{e.rloewner} for all time.  (This is also to be expected from  \eqref{e.ftalltimedef}, since once we show that $f^h_t$ is determined by $h$, we will expect that the forward flow $f^\eta_t$ and reverse flow $f^h_t$ are driven by Brownian motions, for $t \geq 0$, that are independent of one another.)

We have shown above how to construct the $f_t$ corresponding to $\Gamma$ from a single Brownian motion (indexed by all of $\R$).  We will now give an alternate description of the law $\Gamma$ by coupling this $f_t$ with $h$ using Theorem \ref{reversecoupling} applied with $T = +\infty$.  Informally, this amounts to ``zipping up all the way'' and then sampling a GFF on the fully zipped up surface.  (This description comprises the remainder of Section \ref{ss.capstataddconst}. It will not be used in subsequent sections and may be skipped on a first read.  See \cite{wedgespaper} for an analysis of what happens when two sides of a general quantum wedge are zipped together.)  To explain what this means, recall that we showed in the proof of Theorem \ref{reversecoupling} that for each fixed $\density$ with total integral zero, the process $(\h_t, \density)$ is a Brownian motion parameterized by $-E_t(\density)$; in particular, this implies that $\lim_{t \to +\infty} (\h_t, \density)$ exists almost surely and has variance at most $E_0(\density)$.

We will be interested in $\h_t$ modulo additive constant.  Let us focus on the function $\h_t(z) - \h_t(y)$ for some fixed point $y \in \H$.  By harmonicity, $\h_t(z)$ is equivalent to its mean value on a ball centered at $z$, and hence $\h_t(z) - \h_t(y)$ can be written as $(\h_t, \density)$ for a smooth, compactly supported $\density$ with mean zero.  Thus $\h_t(z) - \h_t(y)$ is a Brownian motion when parameterized by $-E_t(\density)$, and the total amount of quadratic variation time elapsed as $t \to +\infty$ is at most $E_0(\density)$ almost surely.  When $z$ is restricted to a bounded domain $D$ with closure in $\H$, we can define such a $\density$ for each $z$ so that the quantity $E_0(\density)$ is bounded uniformly in $z$.   By Doob's inequality, this implies that $\sup_{t} |\h_t(z) - \h_t(y)|$ has a finite second moment, uniformly bounded for $z$ in $D$, and hence $$\mathbb E \int_D \sup_t|\h_t(z) - \h_t(y)|^2 dz < \infty.$$  In fact, since $\h_t(z) - \h_t(y)$ converges almost surely for each $z$, the above shows that $\h_t(\cdot) - \h_t(y)$ restricted to $D$ is almost surely Cauchy in the space $L^2(D)$; since the $\h_t$ are harmonic on $D$, this implies that $\h_t$ almost surely converges uniformly (on each compact subset of $\H$, modulo additive constant) to a harmonic limit $\h_\infty$ as $t \to +\infty$.

On the other hand, the maps $f_t$ clearly do not converge to a finite limit; the Loewner evolution shows that when $z \in \H$ is fixed, $f_t(z)$ cannot converge to any value except for $\infty$ as $t \to +\infty$.  However, let us define $\hat f_t$ to be $a_t f_t + b_t$ where complex constants $a_t$ and $b_t$ are chosen for each $t$ so that $\hat f_t(i-1) = (i-1)$ and $\hat f_t(i+1) = i+1$.  If $f_T$ is as in Figure \ref{reverseftfig}, then $\hat f_T$ would be as in Figure \ref{reverseftnormalizedfig}.
\begin{proposition}
The limit \begin{equation}
\label{e.hatftlimit} \hat f_\infty(z) := \lim_{t \to \infty} \hat f_t(z)\end{equation} exists almost surely as an analytic map from $\H$ to $\C$. It conformally maps $\H$ to the complement, in $\C$, of a certain random path from some finite starting point in $\C$ to $\infty$. (One may then translate the surface so that this starting point is the origin.)
\end{proposition}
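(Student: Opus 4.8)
The plan is to reduce the existence of $\hat f_\infty$ to the convergence of a single ratio of Loewner increments, to exploit that this ratio solves an ODE --- the driving noise cancelling out in differences of flow values --- and then to upgrade pointwise convergence to locally uniform convergence, so that the limit is automatically conformal; the image is read off afterward via Carath\'eodory's kernel theorem.

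First I would make $\hat f_t$ explicit. The conditions $\hat f_t(i\pm 1)=i\pm 1$ force $a_t=2\bigl(f_t(i+1)-f_t(i-1)\bigr)^{-1}$ and $b_t=(i-1)-a_tf_t(i-1)$, so
$$\hat f_t(z)=(i-1)+2\,\frac{f_t(z)-f_t(i-1)}{f_t(i+1)-f_t(i-1)},$$
and it is enough to show that for each fixed $z\in\H$ the ratio $u_t/v_t$ converges to a finite nonzero limit, where $u_t:=f_t(z)-f_t(i-1)$ and $v_t:=f_t(i+1)-f_t(i-1)$. Writing $p_t=f_t(i-1)$, $q_t=f_t(i+1)$, $r_t=f_t(z)$ and using $df_t(w)=-2f_t(w)^{-1}dt-\sqrt\kappa\,dB_t$, the $dB_t$ term cancels from $du_t$ and $dv_t$, which therefore solve genuine ODEs, $\tfrac{d}{dt}\log u_t=2/(p_tr_t)$ and $\tfrac{d}{dt}\log v_t=2/(p_tq_t)$, so that
$$\frac{d}{dt}\log\frac{u_t}{v_t}=\frac{2\,(q_t-r_t)}{p_t\,q_t\,r_t}.$$
Everything thus reduces to showing $\int_0^\infty |q_t-r_t|\,|p_tq_tr_t|^{-1}\,dt<\infty$ almost surely. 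For the denominator I would use the standard reverse-Loewner facts that $\Im f_t(w)$ is strictly increasing in $t$ (hence $|p_t|,|q_t|\ge1$ and $|r_t|\ge\Im z$) and that $|f_t(w)|\asymp t^{1/2}$ as $t\to\infty$; for the numerator, that a difference of two distinct flow values decays like $t^{-1/2}$. These make the integrand $O(t^{-2})$, so the integral converges (near $t=0$ there is nothing to check once $z\neq i\pm1$, and the exceptional $z$ are handled by continuity in $z$). Carried out uniformly for $z$ in a compact subset of $\H$, the same estimates give $\hat f_t\to\hat f_\infty$ locally uniformly, so $\hat f_\infty$ is analytic, and --- being a nonconstant locally uniform limit of univalent maps, since it still fixes $i-1$ and $i+1$ --- it is univalent by Hurwitz.

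For the image, write $\hat f_t(\H)=(a_t\H+b_t)\setminus\hat K_t$ with $\hat K_t=a_tK_t+b_t$ and $K_t=\H\setminus f_t(\H)$ the reverse-SLE hull. A short computation with $|a_t|\asymp t^{1/2}$, $b_t=(i-1)-a_tp_t$, and $|a_t|^2\Im p_t\asymp t^{3/2}\to\infty$ shows that every compact subset of $\C$ is eventually contained in $a_t\H+b_t$. By Carath\'eodory's kernel theorem the domains $\hat f_t(\H)$ converge to $\hat f_\infty(\H)$, so $\hat f_\infty(\H)=\C\setminus\hat K_\infty$ for a continuum $\hat K_\infty$ which, as the complement of a conformal image of the simply connected domain $\H$, is connected and has connected complement. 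Since $\mathrm{diam}\,\hat K_t=|a_t|\,\mathrm{diam}\,K_t\asymp t\to\infty$, $\hat K_\infty$ is unbounded; and (for $\kappa<4$) each $K_t$ is a simple arc, with the Rohde--Schramm boundary regularity of the SLE trace uniform enough to pass to the limit, so that $\hat K_\infty$ is a simple curve --- one of whose ends is $\infty$ (because $\hat f_\infty$ fixes $\infty$ and $\hat K_\infty$ is unbounded) and the other the image of the origin. This produces the conformal map from $\H$ onto the complement of a random path from $0$ to $\infty$.

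The main obstacle is the estimate of the second paragraph. Because $f_t(w)^2=w^2+(\kappa-4)t+M_t$ with $M_t$ a martingale whose quadratic variation grows like $t^2$, the fluctuations are of the same order $t$ as the drift, so $|f_t(w)|\asymp t^{1/2}$ cannot be obtained by taking expectations; it requires pathwise control (It\^o's formula for $\log|f_t(w)|$ and for $\log|f_t(w)-f_t(w')|$, Gronwall, and Burkholder--Davis--Gundy bounds on the martingale parts) with enough spatial uniformity both to dispatch the measure-zero exceptional set and to obtain locally uniform convergence. These estimates can be cross-checked against the already-established convergence of $\h_t=\tfrac{2}{\sqrt\kappa}\log|f_t|+Q\log|f_t'|$ together with the deterministic identity $\log f_t'(w)=\int_0^t 2\,f_s(w)^{-2}\,ds$. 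A minor remaining point is pinning down the finite end of $\hat K_\infty$ to be exactly the origin.
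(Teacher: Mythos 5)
Your reduction of $\hat f_t$ to the ratio $u_t/v_t$ and the ODE identity $\frac{d}{dt}\log(u_t/v_t)=2(q_t-r_t)/(p_tq_tr_t)$ are correct, but the proof has a genuine gap exactly where you flag it: the integrability of $|q_t-r_t|\,|p_tq_tr_t|^{-1}$ rests on the asserted asymptotics $|f_t(w)|\asymp t^{1/2}$ and $|f_t(w)-f_t(w')|\asymp t^{-1/2}$, and these are never proved. Worse, as stated they are not quite true pathwise: the driving term $-\sqrt\kappa B_t$ has excursions of order $\sqrt{t\log\log t}$, so there is no almost sure two-sided bound $|f_t(w)|\asymp t^{1/2}$, and there is no deterministic lower bound on $\Im f_t(w)$ of order $t^{1/2}$ either (one only has $(\Im f_t(w))^2\le (\Im w)^2+4t$ for free; the lower bound is probabilistic). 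Likewise the decay of $|f_t(w)-f_t(w')|$ comes from $\frac{d}{dt}\log|f_t(w)-f_t(w')|=\Re\frac{2}{f_t(w)f_t(w')}$, whose sign is wrong during the time intervals when the real parts dominate, so even a sub-polynomial upper bound requires averaging over excursions. Establishing weakened versions of these bounds, uniformly over $z$ in compacts (which you also need for local uniformity and hence analyticity of the limit), is the entire analytic content of the statement, and the Gronwall/BDG program you sketch in the last paragraph is a plan, not a proof. The description of the image is also soft at the end (kernel convergence is fine, but "regularity uniform enough to pass to the limit" and the identification of the finite endpoint with $0$ are asserted rather than argued).

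It is worth contrasting this with the route the paper takes, which avoids all quantitative Loewner asymptotics. The paper first shows, from the coupling, that $\h_t(z)-\h_t(y)=(\h_t,\density)$ is a Brownian motion run for a total quadratic-variation time at most $E_0(\density)<\infty$, whence $\h_t$ converges a.s.\ uniformly on compacts, modulo additive constant, to a harmonic limit. It then feeds this into the identity $\h_t(z)=\frac{2}{\sqrt\kappa}\log|f_t(z)|+Q\log|f_t'(z)|$ and kills the first term by a soft oscillation argument: if $\log|f_t(z)|-\log|f_t(y)|$ exceeded some $C>0$ at arbitrarily large times, then at each such time there would be a uniformly positive probability that $\h_t(z)-\h_t(y)$ moves by a fixed amount shortly afterwards, contradicting its a.s.\ convergence. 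This gives uniform convergence of $\log f_t'$ modulo constants, hence of $\hat f_t$. If you want to salvage your approach, the cleanest fix is probably to import that mechanism rather than to prove the $t^{1/2}$ asymptotics directly.
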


\begin {figure}[h]
\begin {center}
\includegraphics [width=3in]{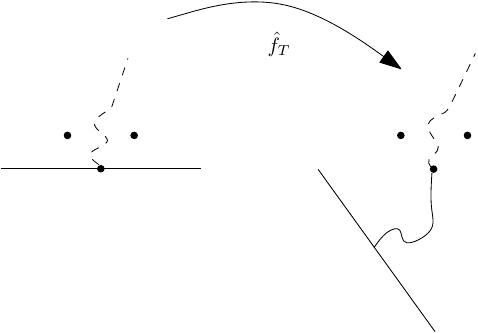}
\caption {\label{reverseftnormalizedfig} The map $\hat f_T$ is first normalized so that $\hat f_T(i-1) = (i-1)$ and $\hat f_T(i+1) = i+1$.  (Here $i \pm 1$ are shown as dots; a curve $\eta$ and its image under $\hat f_T$ is also shown.)  The limit $\hat f_\infty(z) := \lim_{t \to \infty} \hat f_t(z)$ a.s.\ exists as a conformal map from $\H$ to the complement in $\C$ of a semi-infinite path from some starting point to $\infty$.}
\end {center}
\end {figure}

\begin{proof}
To establish the existence of this limit, we argue that each of the two terms on the RHS of \begin{equation} \label{e.anotherhtdef} \h_t(z) = \h_0(f_t(z)) + Q \log |f_t'(z)|\end{equation} converges almost surely (modulo additive constant) to a limit.  By \eqref{e.hatftlimit} the LHS has a limit, so it will suffice to show that $\h_0(f_t(z)) - \h_0(f_t(y))$ a.s.\ converges uniformly to zero (for $y,z \in D$).  For contradiction, suppose otherwise.  Then, for some fixed $C$, let $T_k$ be the first time after time $2^k$ for which there exist $y$ and $z$ in $D$ such that \begin{equation} \label{e.loggap} \Bigl| \log |f_T(z)| - \log |f_T(y)| \Bigr| \geq C.\end{equation}  By assumption, we may choose $C$ so that with positive probability each of the $T_k$ is finite.  In each case, we may assume $|f_T(z)| \geq |f_T(y)|$ so that \eqref{e.loggap} implies $|f_T(z)|/|f_T(y)| > 1+C$.

Now, conditioned on $f_t$ up to such a time $T=T_k$,  there is at least a constant $C_1$ probability that $\h_t(z) - \h_t(y)$ will change by at least some constant $C_2$ during the next $\min \{|f_T(y)|^2, |f_T(z)|^2\}$ units of capacity time after time $T$.  By scaling, it is enough to observe that this is true when $|f_T(y)| < |f_T(z)|$ and $|f_T(y)| = 1$, which follows from the claim in the caption to Figure \ref{ftyftz}.

\begin {figure}[h]
\begin {center}
\includegraphics [width=1.5in]{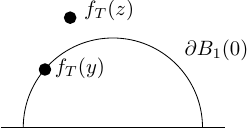}
\caption {\label{ftyftz} Assume that $|f_T(y)| = 1$ and $|f_T(z)|>1+C$ and consider the reverse Loewner flow driven by $W_t = \sqrt \kappa B_t$).
Claim: for some $C_1, C_2>0$ (depending only on $C$) there is at least a $C_1$ probability that $\h_t(z) - \h_t(y)$ changes by at least $C_2$ after time $T$.}
\end {center}
\end {figure}

To prove this claim, recall \eqref{e.anotherhtdef} (and the definition of $\h_0$ in Theorem \ref{reversecoupling}):
\begin{equation} \label{e.htdiff} \h_t(z) - \h_t(y) =   \frac{2}{\sqrt{\kappa}}\Bigl(  \log|f_t(z)| - \log|f_t(y)| \Bigr) + Q \Bigl( \log |f_t'(z)| - \log |f_t'(y)|\Bigr).\end{equation}
To deal with the first difference on the RHS of \eqref{e.htdiff}, note that $$\log|f_T(z)-a| - \log|f_T(y)-a|$$ cannot, as a function of $a$, be constant on the interval $[-1/2,0]$ or constant on the interval $[0,1/2]$.  Indeed (if $|f_T(z)|>1+C$) the amount by which this function varies over each interval is at least some $c>0$.  Thus if $W_t$ goes up or down by $1/2$ during $t_\Delta$ units of time after time $T$ (and $t_\Delta$ is small enough), then the Loewner flow \eqref{e.rloewner} shows that $f_t(z)-f_T(z)$ is approximately $W_T-W_t$ during this time, so that the first difference on the LHS of \eqref{e.htdiff} changes by at least $\frac{c}{\sqrt{\kappa}}$.  If we choose $t_\Delta$ small enough, we can also ensure that the second difference on the RHS of \eqref{e.htdiff} changes by less than $\frac{c}{2\sqrt{\kappa}}$ (recall $d \log f'_t = (2/f_t^2)dt$).  Thus, \eqref{e.htdiff} changes by at least $C_2 := \frac{c}{2\sqrt{\kappa}}$.  The probability of this depends only on $t_\Delta$, which depends only on $C$.

It follows from the above that on the event that the $T_k$ are all finite, the quantity \eqref{e.htdiff} a.s.\ changes by $C_2$ after $T_k$ for infinitely many choices of $k$, contradicting the fact (which we have already shown) that $\h_t(z) - \h_t(y)$ a.s.\ converges uniformly to a limit for $y,z \in D$.  We conclude that there is almost surely a last time for which $\log |f_T(z)| - \log |f_T(y)| \geq C$ for some $y$ and $z$ in $D$, and since this holds for any $C$, the RHS of \eqref{e.anotherhtdef} a.s.\ converges uniformly to zero for $y,z \in D$.  Since $\log |f_t'(z)|$ (the second term on the RHS of \eqref{e.anotherhtdef}) then converges almost surely uniformly (modulo additive constant) to a limit, it follows (adding $i$ times the harmonic conjugate) that $\log f_t'(z)$ converges almost surely uniformly on $D$ (modulo additive constant), which implies the uniform convergence of $f_t'$ (modulo multiplicative constant) and hence the the convergence of $\hat f_t$ to a limit.  Since the limiting map is analytic, it follows that the path (as in Figure \ref{reverseftnormalizedfig}) converges to a limiting continuous path as well.
\end{proof}

Theorem \ref{reversecoupling} can now be stated for $T = +\infty$.  The statement is the same except that we replace $$h \circ f_T + Q \log |f'_T| = \h_0 \circ f_T + \widetilde h \circ f_T + Q \log |f'_T|$$ with
$$\widetilde h \circ \hat f_\infty +  Q \log |\hat f'_\infty|,$$
considered modulo additive constant.  (Recall that the $\h_0(f_t(z))$ term on the RHS of \eqref{e.anotherhtdef} a.s.\ tends to zero modulo additive constant as $t \to +\infty$.)

Note that in this case, after ``zipping up all the way to $+\infty$'' the image $\hat f_\infty(\H)$ is a (dense) subset of $\C$, rather than $\H$, and thus $\widetilde h$ is defined as a free boundary GFF on $\C$ rather than just on $\H$.  Given this change, there is nothing in the proof of Theorem \ref{reversecoupling} in Section \ref{couplingsection}, where the law of $(h, \density)$ was checked one test function $\density$ at a time, that fails to hold if we take $T = +\infty$.

To summarize:
\begin{proposition} We may produce a sample from $\Gamma$ explicitly as follows: first sample a standard Brownian motion $B_t$ for $t \in \R$, which determines $f_t$ (and hence $\eta^t$) for all time (positive and negative) by solving \eqref{e.rloewner}.  Then define (modulo additive constant) $\h_\infty := \lim_{t \to +\infty} \h_t$ and $\hat f_\infty := \lim_{t \to +\infty} \hat f_t$ and write $$h = h^0 = \widetilde h \circ \hat f_{\infty} + \h_\infty$$ where $\widetilde h$ is a free boundary GFF on $\C$.  Define $h^t$ for all other finite $t$ as the quantum surface transformation of $h^0$ under $f_t$.  That is, $h^t = h^0 \circ f_t^{-1} + Q \log |(f_t^{-1})'|$.
\end{proposition}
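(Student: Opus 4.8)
The plan is to verify that the explicitly described process has the two properties that pin down $\Gamma$: that its time-$0$ slice $(h^0,\eta^0)$ has law $\Gamma^0$, and that the whole process is obtained from this slice by the (deterministic, two-sided-Brownian-motion-driven) zipping flow in exactly the way $\Gamma$ was constructed around \eqref{hst}. For the flow part: by construction $h^t$ is the quantum-surface transformation of $h^0$ under $f_t$, and the maps $f_t$ solve the reverse Loewner equation \eqref{e.rloewner} for a single two-sided Brownian motion with $B_0=0$; the cocycle identity for Loewner evolution (i.e.\ $f_{s+t}$ equals the appropriate composition of $f_s$ and $f_t$) together with the coordinate-change composition rule \eqref{Qmap} then gives $\zcap_{s-t}(h^t,\eta^t)=(h^s,\eta^s)$ a.s.\ for all $s,t$, which is routine Loewner bookkeeping. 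Since $\Gamma$ itself was built by sampling $(h^T,\eta^T)\sim\Gamma^0$ for large $T$, running this measure-preserving flow backward (Theorem \ref{reversecoupling}), and letting $T\to+\infty$, any process agreeing with $\Gamma$ in its time-$0$ marginal and in the deterministic flow relating different times agrees with $\Gamma$ in law. Thus everything reduces to the time-$0$ slice, and in fact the law of $(f_t)_{t\in\R}$ is already correct by definition (both $\Gamma$ and the recipe obtain these maps by solving \eqref{e.rloewner} from a standard two-sided Brownian motion).

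Next I would record the measurability structure. Writing $\mathcal F_+=\sigma(B_u:u\ge 0)$ and $\mathcal F_-=\sigma(B_u:u\le 0)$, the reverse flow $(f_t)_{t\ge 0}$ — hence $\hat f_t,\h_t$ for $t\ge 0$ and their limits $\hat f_\infty,\h_\infty$ from the preceding Proposition — are $\mathcal F_+$-measurable, while $(f_t)_{t\le 0}=(f^{\eta^0}_{-t})_{t\ge 0}$, and hence the curve $\eta^0$ itself, are $\mathcal F_-$-measurable, via the forward/reverse time-reversal correspondence recalled after \eqref{e.ftalltimedef}. The field $\widetilde h$ is sampled independently of all of $B$. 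Since for a standard two-sided Brownian motion $\mathcal F_+$ and $\mathcal F_-$ are independent, $\eta^0$ is independent of the triple $(\hat f_\infty,\h_\infty,\widetilde h)$, hence of $h^0=\widetilde h\circ\hat f_\infty+\h_\infty$. This is exactly the mechanism producing the claimed independence of the two quantum surfaces.

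It then remains to check two marginals. First, $\eta^0$, whose forward Loewner flow is driven (after time reversal) by a standard Brownian motion, is an ordinary $\mathrm{SLE}_\kappa$. Second, $h^0=\widetilde h\circ\hat f_\infty+\h_\infty$ has the law of $\frac{2}{\sqrt\kappa}\log|\cdot|$ plus a free boundary GFF on $\H$ modulo additive constant: this is precisely the $T=+\infty$ form of Theorem \ref{reversecoupling} discussed just before this Proposition, i.e.\ the observation that nothing in the one-test-function-at-a-time proof of Theorem \ref{reversecoupling} in Section \ref{couplingsection} fails when $T\to+\infty$, once one knows (as established above) that $(\h_t,\density)$ converges a.s.\ to $(\h_\infty,\density)$ with the correct residual variance and that $\widetilde h\circ\hat f_\infty$ is a genuine random distribution on $\H$. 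Combining these gives $(h^0,\eta^0)\sim\Gamma^0$, and feeding this slice through the zipping flow as in the first paragraph reproduces $\Gamma$.

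The main obstacle is the $T=+\infty$ passage in Theorem \ref{reversecoupling}: one must be certain that the Brownian-motion-parametrized martingales $(\h_t,\density)$ really converge as $t\to\infty$ with exactly the variance needed to splice onto a conditionally independent Gaussian increment of variance $E_\infty(\density)$, and that the limiting object $\widetilde h\circ\hat f_\infty$ — a free boundary GFF on $\C$ read through a conformal map of $\H$ onto a slit plane — actually defines a distribution on all of $\H$, which relies on the a.s.\ removability of $\eta\cup\bar\eta$ for the entire $\mathrm{SLE}_\kappa$ path, $\kappa<4$. Both points have essentially been set up already (the former via the Doob/martingale estimates of Section \ref{couplingsection} and the preceding Proposition, the latter via removability), so the remaining work is chiefly to assemble them, together with the $\mathcal F_+/\mathcal F_-$ independence, into the stated description of $\Gamma$.
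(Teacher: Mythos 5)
The gap is in your opening reduction. You claim that matching the time-$0$ marginal, the marginal law of the maps $(f_t)_{t\in\R}$, and the pathwise flow relation \eqref{hst} identifies the law as $\Gamma$. That is not enough: $\Gamma$ is pinned down by its single-time marginals at arbitrarily large positive times together with the unzipping relation (earlier slices are deterministic functions of later slices), or equivalently by the \emph{joint} law of $h^0$ with the positive-time zipping maps; and under $\Gamma$ the field $h^0$ is far from independent of those maps --- by the very construction of $\Gamma$, the conditional law of $h^0$ given $(B_s)_{0\le s\le T}$ is that of $\h_T+\widetilde h\circ f_T$, which depends nontrivially on the maps (and the later results of the paper show $h^0$ in fact a.s.\ determines them). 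Your $\mathcal F_+/\mathcal F_-$ independence only controls the relation of $h^0$ to $\eta^0$, i.e.\ to the negative-time maps; the relation of $h^0$ to the positive-time maps is the heart of the statement and is never addressed. To see that the three conditions you verify genuinely do not suffice, take the $\Gamma$-process for $t\le 0$ and, for $t>0$, zip $h^0$ up along a reverse SLE$_\kappa$ flow chosen independently of $(h^0,\eta^0)$: this process has time-$0$ slice distributed as $\Gamma^0$, a two-sided standard Brownian driving function, and satisfies the flow relation, yet it is not $\Gamma$, since in it $h^0$ is independent of the zip-up maps while under $\Gamma$ it is not (equivalently, its time-$t$ slices for $t>0$ weld boundary arcs whose $\nu_{h^0}$ lengths a.s.\ disagree, which is impossible under $\Gamma$ by Theorem \ref{conformalwelding}).

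What the paper actually does, and what repairs your argument, is to read the $T=+\infty$ form of Theorem \ref{reversecoupling} as a statement about the coupling, i.e.\ about the conditional law of $h^0$ given the entire driving motion: under $\Gamma$ this conditional law is the $T\to\infty$ limit of $\h_T+\widetilde h\circ f_T$, and the convergence proposition together with the one-test-function-at-a-time variance bookkeeping identifies that limit with $\h_\infty+\widetilde h\circ\hat f_\infty$, with $\widetilde h$ a free boundary GFF on $\C$; since the whole process is a deterministic function of the pair $(B,h^0)$, matching this joint law gives the proposition. Alternatively, staying within your marginal framework, you would need the recipe's time-$t$ slice to be $\Gamma^0$ for \emph{every} $t$, which requires the shift-equivariance identity $h^t=\widetilde h\circ\hat f_\infty^{(t)}+\h_\infty^{(t)}$ modulo additive constant, where the superscript denotes the objects built from the shifted motion $(B_{t+u}-B_t)_{u\ge 0}$; this follows from the chain rule (noting that $f_{t+u}\circ f_t^{-1}$ is the reverse flow driven by the shifted motion) and the invariance, modulo additive constants, of the free boundary GFF on $\C$ under affine maps --- but you state neither step. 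A smaller inaccuracy: the well-definedness of $\widetilde h\circ\hat f_\infty$ as a distribution on $\H$ is not a removability issue (removability concerns uniqueness of the welding homeomorphism); the paper handles it via the same test-function variance estimates as in the finite-$T$ case.
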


The entire process $(h^t, \eta^t)$ is thus determined by $\widetilde h$ and the process $B_t$.  Note that for each $t$, the curve $\eta^t$ is a random path.  The forward Loewner evolution of this path --- parameterized by its capacity time $s$ --- is given by $f_{t-s}$, with $s$ ranging from $0$ to $+\infty$.  Its Loewner driving function is (as a function of $s$) $\sqrt \kappa B_{t-s}$.  Note that $\zcap_t$ has the effect of translating the process $B_{\cdot}$ by $t$ units to the left (and adding a constant to maintain $B_0=0$).

\subsection{Re-parameterizing time: general observations} \label{ss.reparameterization}

Now we make a general observation about the GFF.  Suppose we write $dh$ for the law of a (not necessarily centered) free boundary GFF $h$, and that $f$ belongs to the Hilbert space on which $h$ is defined, so that $\Var (h,f)_\nabla = (f,f)_\nabla$.  As mentioned in Section \ref{ss.slekr}, the standard Gaussian complete-the-square calculation shows that $e^{(h,f)_\nabla} dh$ (multiplied by a constant to make it a probability measure) is the law of the original GFF plus $f$.  In other words, weighting the law of $h$ by $e^{(h,f)_\nabla}$ is equivalent to deterministically adding $f$ to $h$.  Thus weighting the law of $h$ by $e^{(h, \density)}$ (we assume that $\density$ has total integral zero) is equivalent to adding $-2\pi \Delta^{-1} \density$ to $h$.  Here $\Delta^{-1}$ is defined in the Neumann sense, i.e., by taking by \eqref{e.deltainverse} with $G(x,y) = G^{\H_F}(x,y)$.  Note also the following:

\begin{proposition} \label{p.densitystationary} Let $(h^t,\eta^t)$ be the process with law $\Gamma$ (as in Section \ref{ss.capstataddconst}), let $\density$ be a signed measure with total integral zero (for which $(h^0,\density)$ is a.s.\ finite), and write $$s_\density (t) := \int_0^t e^{(h^s, \density)}ds.$$  Then the weighted measure $e^{(h^0,\density)} \Gamma$ (normalized to be a probability measure) is stationary w.r.t.\ $s_\density$ time.
\end{proposition}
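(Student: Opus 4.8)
\medbreak\noindent\textbf{Proof proposal for Proposition \ref{p.densitystationary}.} The plan is to recognize this as an instance of a classical fact about measure-preserving flows: if one reparametrizes a measure-preserving flow so that its clock runs at instantaneous rate $\phi$ for a positive functional $\phi$, then the reparametrized flow preserves the measure whose density with respect to the original one is $\phi$. Here the flow is the capacity-zipper dynamics $\zcap_t$, the functional is $\phi(\omega)=e^{(h^0,\density)}$, and the new clock is $s_\density$. The verification is the ``elementary calculus'' of the change of variables $du=\phi\,dt$; the one point that requires a genuine argument is that the reparametrization is globally well defined.

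Concretely, I would let $\Omega$ be the path space of the process $\omega=(h^t,\eta^t)_{t\in\R}$ and let $\theta_u:\Omega\to\Omega$ be the time-shift, $(\theta_u\omega)^t=\omega^{t+u}$, so that the maps $\zcap_u$ act on $\Omega$ as $\theta_u$. By Corollary \ref{stationaryzipping} together with the construction of $\Gamma$ in Section \ref{ss.capstataddconst}, $(\Omega,(\theta_u)_{u\in\R},\Gamma)$ is a measure-preserving flow. Write $\phi(\omega):=e^{(h^0,\density)}$, which lies in $(0,\infty)$ for $\Gamma$-a.e.\ $\omega$ by the hypothesis on $\density$ (and is $\Gamma$-integrable, as is implicit in the normalization in the statement; if not, $\phi\,\Gamma$ is merely $\sigma$-finite and the argument is unchanged). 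The clock in the statement is then the additive cocycle $s_\density(t,\omega)=\int_0^t e^{(h^r,\density)}\,dr=\int_0^t\phi(\theta_r\omega)\,dr$, which satisfies $\partial_t s_\density(t,\omega)=\phi(\theta_t\omega)$ and $s_\density(t+t',\omega)=s_\density(t,\omega)+s_\density(t',\theta_t\omega)$. The assertion to be proved is exactly that the reparametrized flow $\tilde\theta_u:=\theta_{t_\density(u,\cdot)}$, where $t_\density(\cdot,\omega)$ is the inverse of $s_\density(\cdot,\omega)$, preserves the probability measure proportional to $\phi\,d\Gamma$.

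The argument then has three steps. \emph{(1) The reparametrization is well defined.} For $\Gamma$-a.e.\ $\omega$ the map $t\mapsto s_\density(t,\omega)$ is continuous and strictly increasing (since $\phi>0$ a.s.); and since a probability measure preserved by a flow makes it conservative (Poincar\'e recurrence), and $\phi>0$ a.s., one gets $\int_0^{\infty}\phi(\theta_r\omega)\,dr=\int_{-\infty}^{0}\phi(\theta_r\omega)\,dr=+\infty$ for $\Gamma$-a.e.\ $\omega$, so $s_\density(\cdot,\omega)$ is a.s.\ an increasing homeomorphism of $\R$; the cocycle identity then makes $(\tilde\theta_u)_{u\in\R}$ a measurable flow, $\tilde\theta_{u+v}=\tilde\theta_u\circ\tilde\theta_v$. \emph{(2) Reduction.} By Fubini and the flow property it suffices to prove $\int_\Omega\big(\int_0^{u_0}F(\tilde\theta_u\omega)\,du\big)\phi(\omega)\,d\Gamma(\omega)=u_0\int_\Omega F\,\phi\,d\Gamma$ for every bounded measurable $F:\Omega\to\R$ and $u_0>0$; indeed, applying this with $F\circ\tilde\theta_v$ in place of $F$ and using $\int_0^{u_0}=\int_0^{u_0+v}-\int_0^{v}$ recovers $\int F(\tilde\theta_v\omega)\phi\,d\Gamma=\int F\phi\,d\Gamma$ for $v>0$, and the case $v\le 0$ follows by taking inverses, giving $(\tilde\theta_v)_*(\phi\,\Gamma)=\phi\,\Gamma$. \emph{(3) The calculus.} In the inner integral substitute $u=s_\density(t,\omega)$, $du=\phi(\theta_t\omega)\,dt$; since $\tilde\theta_{s_\density(t,\omega)}\omega=\theta_t\omega$, the left side equals $\int_\Omega\int_0^{\infty}\mathbf 1\{s_\density(t,\omega)<u_0\}\,F(\theta_t\omega)\,\phi(\theta_t\omega)\,\phi(\omega)\,dt\,d\Gamma(\omega)$. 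Fubini in $t$, then the $\theta_{-t}$-invariance of $\Gamma$ for each fixed $t$ (replacing $\omega$ by $\theta_{-t}\omega$), then a second Fubini and the substitution $y=s_\density(t,\theta_{-t}\omega)=-s_\density(-t,\omega)$ (whose $t$-derivative is $\phi(\theta_{-t}\omega)$ and which ranges over $[0,\infty)$ by step (1)) collapse this to $\int_\Omega F(\omega)\,\phi(\omega)\big(\int_0^{u_0}dy\big)\,d\Gamma(\omega)=u_0\int_\Omega F\,\phi\,d\Gamma$, which is the identity of step (2).

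I expect the main (indeed the only) obstacle to be step (1): establishing that $s_\density(\cdot,\omega)\to\pm\infty$, i.e.\ that $\int_0^{\pm\infty}\phi(\theta_r\omega)\,dr$ diverges almost surely, so that the time change is a genuine bijection of $\R$ rather than only a local semiflow. This is the place where the stationarity of $\Gamma$ (via Corollary \ref{stationaryzipping}) is actually used --- it follows from conservativity of a finite measure-preserving flow together with $\phi>0$ a.s. --- whereas everything else is the change-of-variables bookkeeping that the statement refers to as elementary calculus.
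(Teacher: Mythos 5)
Your proof is correct and takes essentially the same route as the paper, which gives only the remark that the statement ``follows from elementary calculus'' plus the recentering heuristic: your three steps are a careful formalization of exactly that change-of-variables argument for the time-changed measure-preserving flow $\zcap_t$. The one detail you supply beyond the paper's sketch --- invoking Poincar\'e recurrence/conservativity of the $\Gamma$-preserving flow to ensure $s_\density(\cdot,\omega)$ is almost surely a bijection of $\R$, so the reparametrized flow is globally defined --- is precisely what the paper leaves implicit, and you handle it correctly.
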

\begin{proof}
First observe that the constant $ \mathbb E_\Gamma e^{(h^0,\density)}$ is finite, and by Fubini's theorem $\mathbb E_\Gamma \int_0^T  e^{(h^t, \density)}dt $ is $T$ times that constant.  The ergodic theorem implies that as $T \to \infty$ along integers the random variables $s_\density(T)/T$ converge to a (possibly random) value $c$ almost surely and in $L^1$. To see that $c$ is a.s.\ constant, we would need to check that the pair $(h^\cdot, \eta^\cdot)$ is ergodic. We do not really need ergodicity for the proof of this proposition but we mention as an aside that it is not hard to prove (the Brownian motion generating $\eta_t$ is ergodic; and if $t_1$ and $t_2$ are distinct times, one can use to GFF properties to show that the restrictions of $h$ to tiny neighborhoods of $\eta(t_1)$ and $\eta(t_2)$ are nearly independent; letting the neighborhoods get small, and using scale invariance, one can establish a long-range mixing property that implies ergodicity). The reason that ergodicity is not necessary for the proof of the proposition is that if $(h^\cdot, \eta^\cdot)$ had multiple ergodic components but each component were invariant under the operation of translation by a fixed amount of $s_\density$ time, then the entire process would be invariant under this operation as well; so it is enough to focus on a single ergodic component.

Now, suppose that $\pi_T$ denotes uniform measure on $[0,T]$ and we sample $s$ from $\pi_T$.  Then consider the measure $\Gamma \times \pi_T$.  Once we have a sample from this measure, we may use it to generate a pair $(\bar h^t, \bar \eta^t) = (h^{t+s},\eta^{t+s})$. That is, $(\bar h^\cdot, \bar \eta^\cdot)$ is the original process $(h^\cdot, \eta^\cdot)$ translated by the random quantity $s$, which was chosen uniformly from $[0,T]$.
The stationarity of $\Gamma$ implies that $(\bar h^\cdot, \bar \eta^\cdot)$ also has the law $\Gamma$.  Now suppose that we instead sample from the weighted measure $\tilde \Gamma = e^{(h^s, \density)}\Gamma \times \pi_T$ (normalized to be a probability measure). Since $e^{(h^s, \density)} = e^{(\bar h^0, \density)}$, the induced measure on $(\bar h^\cdot, \bar \eta^\cdot)$ has the law $e^{(h^0,\density)} \Gamma$.

However, sampling from $\tilde \Gamma$ can also be done by first choosing the process $(h^\cdot, \eta^\cdot)$ from its marginal law, which is the measure $\Gamma$ weighted by $\int_0^T  e^{(h^t, \density)}dt$ (which converges in law to $c \Gamma$ --- normalized to be a probability measure --- in the total variation sense as $T \to \infty$, by the $L^1$ ergodic theorem) and then sampling $s$ from $e^{(h^t, \density)}\pi_T$ (normalized to be a probability measure), which amounts to choosing a uniform $s_\density$ time from the long interval.  Since, as noted above, the process centered at this random time has the law $e^{(h^0,\density)} \Gamma$, we find, by letting $T$ tend to infinity, that $e^{(h^0,\density)} \Gamma$ must be stationary with respect to $s_\density$ time.\end{proof}

Recall from Section \ref{ss.randomgeometryGFF} that adding $-2\pi \Delta^{-1} \density$ to $h$ is equivalent to changing the underlying geometry w.r.t.\ which Liouville quantum gravity is defined.  Thus, Proposition \ref{p.densitystationary} says that a Liouville quantum gravity measure on an alternative underlying geometry ($h$ still taken modulo additive constant) is stationary under zipping/unzipping in a modified version of capacity time.

This symmetry between space and time is intriguing for its own sake.  It is related to but should not be confused with Theorem \ref{reversecouplingkapparho}.  Both theorems involve modifying the function $\h_0$.  However, Theorem \ref{reversecouplingkapparho} does not describe a new stationary process; instead, it gives a modified law for the zipping up map $f_t$ and does not involve changing the time parameter.  On the other hand, one can use Theorem \ref{reversecouplingkapparho} to describe how the process of Proposition \ref{p.densitystationary} evolves in capacity time.  The following is an immediate consequence of Theorem \ref{reversecouplingkapparho}.

\begin{proposition} \label{p.kapparhozipping}
Given a signed measure $\density$ on $\H$ with $(-\Delta^{-1} \density, \density) < \infty$, let $\Gamma_\density$ denote the measure whose Radon-Nikodym derivative with respect to $\Gamma$ is proportional to $e^{(h^0, \density)}$ (normalized to be a probability measure).  Then the $\Gamma_\density$ law of the zipping up map $f_t$, for $t \geq 0$, is the law of the modified SLE process in Theorem \ref{reversecouplingkapparho}.  Moreover, given $f_t$ for some time $t \geq 0$, the $\Gamma_\density$ conditional law of $h^0$ is that of $\widetilde h \circ f_t +{\hat \h}_t$, as defined in the statement of Theorem \ref{reversecouplingkapparho}.
\end{proposition}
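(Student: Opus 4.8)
The plan is to peel the Radon--Nikodym weight $e^{(h^0,\density)}$ into an $\mathcal F_t$-measurable factor and a Gaussian factor, handle the Gaussian factor by the complete-the-square identity of Section~\ref{ss.reparameterization}, and recognize what remains as the content of Theorem~\ref{reversecouplingkapparho}. Here $\mathcal F_t$ is the filtration generated by the Brownian motion $B$ driving the reverse flow, equivalently by $(f_s)_{0\le s\le t}$ (equivalently, for $\kappa<4$, by the simple arc $\eta^0([0,t])$); I take $\int\density=0$, which is what is needed for $(h^0,\density)$ to be well defined modulo the additive constant of $h^0$. One bookkeeping remark: in the informal derivation of Theorem~\ref{reversecouplingkapparho} the weight was $e^{(h,\frac{1}{2\sqrt\kappa}\density)}$, so weighting by $e^{(h^0,\density)}$ corresponds to that theorem with its measure taken to be $2\sqrt\kappa\,\density$, the factor $\tfrac{1}{2\sqrt\kappa}$ in the definition of $\hat\h_t$ there being exactly what absorbs this; I suppress this constant, as well as the standard caveat that the comparison holds only up to the stopping time allowed in Theorem~\ref{reversecouplingkapparho}.

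First I would record the $\Gamma$-coupling at time $t$: by the construction of $\Gamma$ in Section~\ref{ss.capstataddconst}, $h^t$ is independent of $\mathcal F_t$ and has the $\Gamma^0$-law $\h_0+(\text{free boundary GFF})$, so applying \eqref{Qmap} with $f_t$ and using $\h_t(z)=\h_0(f_t(z))+Q\log|f_t'(z)|$ shows that under $\Gamma$ the conditional law of $h^0$ given $\mathcal F_t$ is that of $\h_t+\widetilde h\circ f_t$, with $\widetilde h$ a free boundary GFF (on $\H$; on $\C$ in the $t=+\infty$ picture) independent of $\mathcal F_t$. Hence $(h^0,\density)=(\h_t,\density)+(\widetilde h\circ f_t,\density)$, the first summand $\mathcal F_t$-measurable and $(\widetilde h\circ f_t,\density)=(\widetilde h,(f_t)_*(\density\,dy))$ a centered Gaussian of variance $E_t(\density):=\int\!\!\int G_t(y,z)\,\density(y)\density(z)\,dy\,dz$ (finite, since $E_t(\density)\le E_0(\density)$ and $E_0(\density)$ is a multiple of $(-\Delta^{-1}\density,\density)<\infty$, where $G_t(y,z)=G^{\H_F}(f_t(y),f_t(z))$). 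Now conditioning $\Gamma$ further by $e^{(h^0,\density)}$, which given $\mathcal F_t$ is proportional to $e^{(\widetilde h,(f_t)_*(\density\,dy))}$, and applying the complete-the-square identity of Section~\ref{ss.reparameterization} to $\widetilde h$, we get that under $\Gamma_\density(\,\cdot\mid\mathcal F_t)$ the field $\widetilde h$ has the law of a fresh free boundary GFF $\widetilde h'$ plus the deterministic function $-2\pi\Delta^{-1}\big[(f_t)_*(\density\,dy)\big]$ (the Neumann inverse, i.e.\ \eqref{e.deltainverse} with $G$ replaced by $G^{\H_F}$); evaluated at $f_t(z)$, and using the definition of $G_t$ and the symmetry of $G^{\H_F}$, this equals $\int G_t(y,z)\,\density(y)\,dy$. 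Thus, given $\mathcal F_t$, $h^0$ has the law of $\h_t+\int G_t(y,\cdot)\density(y)\,dy+\widetilde h'\circ f_t=\hat\h_t+\widetilde h'\circ f_t$, which is the ``moreover'' statement ($\hat\h_t$ as in Theorem~\ref{reversecouplingkapparho}).

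For the law of $(f_t)_{t\ge0}$, integrate $h^0$ out: the restriction of $\tfrac{d\Gamma_\density}{d\Gamma}$ to $\mathcal F_t$ is $\E_\Gamma[e^{(h^0,\density)}\mid\mathcal F_t]/\E_\Gamma[e^{(h^0,\density)}]$, which by the Gaussian computation above equals a constant times $M_t:=e^{(\h_t,\density)+\frac12 E_t(\density)}$. Since $(\h_t,\density)$ is a continuous martingale with $d\langle(\h_t,\density),(\h_t,\density)\rangle=-dE_t(\density)$ (Section~\ref{couplingsection}), $M_t$ is (a constant times) a Dol\'eans-Dade exponential martingale, and Girsanov's theorem shows that under $\Gamma_\density$ the process $B$ picks up a drift: from \eqref{e.htdensity}, $d(\h_t,\density)=(-2\Re(f_t)^{-1},\density)\,dB_t$, so $dB_t=d\widetilde B_t+(-2\Re(f_t)^{-1},\density)\,dt$ for a $\Gamma_\density$-Brownian motion $\widetilde B$, whence $W_t=\sqrt\kappa B_t$ satisfies $dW_t=\sqrt\kappa\,d\widetilde B_t+(-\Re(f_t)^{-1},2\sqrt\kappa\,\density)\,dt$, which is exactly the driving equation \eqref{e.dWkapparho} of the modified reverse SLE of Theorem~\ref{reversecouplingkapparho}. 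This identifies the $\Gamma_\density$-law of $(f_t)_{t\ge0}$ with that process. (For $\kappa<4$ one can instead avoid Girsanov: by Theorem~\ref{reversecouplingkapparho} and the complete-the-square identity the $h^0$-marginal of $\Gamma_\density$ equals that of the coupling ``sample the modified SLE $(f_t)$, then set $h^0=\hat\h_T+\widetilde h\circ f_T$''; and under $\Gamma_\density$---mutually absolutely continuous with $\Gamma$, density proportional to $e^{(h^0,\density)}$, a.s.\ finite and positive---the field $h^0$ still a.s.\ determines the whole reverse flow, by Theorem~\ref{conformalweldingunique} and SLE removability exactly as under $\Gamma$; so the $\Gamma_\density$-joint law of $\big((f_t)_{t\ge0},h^0\big)$ is the pushforward of its $h^0$-marginal and hence coincides with that of the modified-SLE coupling.)

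The main obstacle is the martingale/integrability point in the Girsanov step: one must know that $M_t$ is a genuine martingale, uniformly integrable on each $[0,t_0]$, so that the change of measure loses no mass and really agrees with $\Gamma_\density$ on $\mathcal F_t$ for all finite $t$ (and, for the $t=+\infty$ version, in the $\hat f_\infty$ picture with a free boundary GFF on $\C$). Because $\Gamma_\density$ is mutually absolutely continuous with $\Gamma$ this is mild: localize on $\{(h^0,\density)\le N\}$, or use the uniform exponential tail bounds on $(\h_t,\density)$ and $\sup_{s\le t_0}|\h_s(z)|$ from the proof of Theorem~\ref{reversecoupling} together with Doob's inequality to control $\sup_{s\le t_0}M_s$; the stationarity of $\Gamma$ under the shifts $\zcap_t$ then propagates the identification to all finite $t$. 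For the alternative $\kappa<4$ argument, the corresponding subtlety is only the transfer of ``$h^0$ determines the reverse flow'' from $\Gamma$ to $\Gamma_\density$, which is immediate from the mutual absolute continuity.
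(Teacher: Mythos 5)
Your argument is correct and is essentially the proof the paper intends: the paper leaves this proposition to the discussion surrounding Theorem \ref{reversecouplingkapparho} (weighting by $e^{(h,\density)}$ tilts the free-boundary GFF by $-2\pi\Delta^{-1}\density$ and equivalently adds a drift to the driving Brownian motion), and your decomposition of the weight given $\mathcal F_t$, complete-the-square for $\widetilde h\circ f_t$, and Girsanov step with $M_t\propto e^{(\h_t,\density)+\frac12 E_t(\density)}$ is exactly the formalization of that, with the $2\sqrt\kappa$ normalization matched in the same way as in Lemma \ref{l.weightedconditionalzipping}. Two minor remarks: the uniform integrability you flag as the main obstacle is automatic, since $M_t=\Eb\Gamma{e^{(h^0,\density)}\md\mathcal F_t}$ is the conditional expectation of a fixed $\Gamma$-integrable (Gaussian) weight and hence a closed martingale; and your parenthetical alternative route for $\kappa<4$ should be dropped or flagged, because the statement that $h^0$ a.s.\ determines the zipping maps rests on Theorem \ref{conformalwelding}, whose proof in the paper uses the present proposition via Lemma \ref{l.weightedconditionalzipping}.
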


\subsection{Conclusion of zipper proofs}
\label{ss.zipperconlusion}

\begin{proposition} \label{p.gammawedge2}
The conclusion of Proposition \ref{p.gammawedge} (see Figure \ref{boundaryx}) remains true even if, when we choose $h$ and $x$, we condition on a particular pair of values $L_1 = \nu_h [a,x]$ and $L_2 = \nu_h[x,b]$.
\end{proposition}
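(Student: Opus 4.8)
The plan is to upgrade Proposition \ref{p.gammawedge} to a statement about the \emph{joint} law of $\bigl(\mathcal S_{h^*+C/\gamma},\,\nu_h[a,x],\,\nu_h[x,b]\bigr)$ and then disintegrate. Recall from \cite{2008arXiv0808.1560D} that, conditionally on $x$, the field $h$ has the law $\widetilde h + F_x$, where $F_x = \h_0 - \gamma\log|\cdot - x| + (\text{a function smooth near }x)$ is deterministic. Write $L_1 = \nu_h[a,x]$, $L_2 = \nu_h[x,b]$ and, for $\delta>0$, write $L_i^{(\delta)}$ for the same quantities with the interval $[x-\delta,x+\delta]$ deleted; since $\nu_h$ is a.s.\ atomless, $L_i^{(\delta)} \uparrow L_i$ a.s.\ as $\delta \downarrow 0$, and each $L_i^{(\delta)}$ is (up to a harmless factor in $\delta$) measurable with respect to $\mathcal G_\delta := \sigma\bigl(h|_{D\setminus B_\delta(x)}\bigr)$.

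The first and main step is a conditional version of Proposition \ref{p.gammawedge}: for each fixed $\delta>0$, conditionally on $x$ and on $\mathcal G_\delta$, the surfaces $\mathcal S_{h^*+C/\gamma}$ converge weakly as $C\to\infty$ to the canonical description of a $\gamma$-quantum wedge $W$, with a limiting law that is a.s.\ the same whatever the value of $\mathcal G_\delta$. To see this, decompose $h|_{B_\delta(x)\cap\H}$ via the Markov property as $\widetilde h^0_\delta + \mathfrak g_\delta + F_x$, where $\widetilde h^0_\delta$ is, given $\mathcal G_\delta$, an independent GFF on $B_\delta(x)\cap\H$ with zero boundary on the semicircular part and free boundary on $[x-\delta,x+\delta]$, while $\mathfrak g_\delta$ is $\mathcal G_\delta$-measurable and harmonic, hence a.s.\ smooth with locally bounded gradient, on $B_\delta(x)\cap\H$. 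For $C$ large, $\mathcal S_{h^*+C/\gamma}$ restricted to a fixed bounded region of $\H$ is a function of $h$ on a Euclidean neighborhood of $x$ whose radius tends to $0$, so only $h|_{B_\delta(x)}$ is relevant; there $\mathfrak g_\delta$ equals the constant $\mathfrak g_\delta(x)$ up to an error that vanishes under the zoom, and that constant is absorbed by the $C/\gamma$ normalization. The limit is then computed exactly as in the proof of Proposition \ref{p.gammawedge}, whose argument applies verbatim to any field that, near the root, is a free-boundary GFF plus $-\gamma\log|\cdot-x|$ plus a function smooth at $x$ (the $\log$ supplies precisely the radial drift of a $\gamma$-wedge, the smooth part converges to a constant, and the lateral part is scale invariant); the outcome is a $\gamma$-wedge with a law that does not involve $\mathcal G_\delta$.

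Because $L_1^{(\delta)},L_2^{(\delta)}$ are (essentially) $\mathcal G_\delta$-measurable, averaging the previous step over $\mathcal G_\delta$ given $\bigl(x, L_1^{(\delta)}, L_2^{(\delta)}\bigr)$ — legitimate since conditional weak convergence with an a.s.\ deterministic limiting law is preserved under conditioning on a coarser $\sigma$-field — yields that $\bigl(\mathcal S_{h^*+C/\gamma}, L_1^{(\delta)}, L_2^{(\delta)}\bigr)$ converges in law, as $C\to\infty$, to the product of the $\gamma$-wedge law and $\mathrm{Law}\bigl(L_1^{(\delta)}, L_2^{(\delta)}\bigr)$. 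Letting $\delta\downarrow 0$, a $3\varepsilon$ argument (using $L_i^{(\delta)}\to L_i$ a.s.\ and boundedly, so that the approximation error is controlled uniformly in $C$) upgrades this to $\bigl(\mathcal S_{h^*+C/\gamma}, L_1, L_2\bigr) \Rightarrow W \otimes \mathrm{Law}(L_1,L_2)$. Disintegrating this product limit gives, for $\mathrm{Law}(L_1,L_2)$-a.e.\ $(\ell_1,\ell_2)$, the claimed convergence of the law of $\mathcal S_{h^*+C/\gamma}$ conditioned on $\{L_1=\ell_1,\ L_2=\ell_2\}$ to a $\gamma$-quantum wedge. (Convergence along the full sequence, rather than a subsequence, can be obtained by instead passing to the limit $\delta\to0$ directly inside the conditional-on-$\bigl(x,(L_i^{(\delta')})_{\delta'\ge\delta}\bigr)$ statement, using Lévy's upward theorem on the information generated by the $L_i^{(\delta)}$ together with the uniform boundedness of the relevant conditional expectations.)

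The hard part is the conditional form of Proposition \ref{p.gammawedge} in the second paragraph: one must verify that conditioning on $\mathcal G_\delta$ leaves the residual field near $x$ of the ``free-boundary GFF $+$ $(-\gamma\log)$ $+$ smooth'' type, with its $\mathcal G_\delta$-dependence degenerating, in the $C\to\infty$ limit, to a single additive constant, so that the radial-drift computation and the lateral-part convergence of \cite{2008arXiv0808.1560D} carry over unchanged. Everything else — the atomlessness of $\nu_h$, the interchange of the $C\to\infty$ and $\delta\to0$ limits, and the disintegration — is routine.
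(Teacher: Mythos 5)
Your route is genuinely different from the paper's, but it has a gap at exactly the step you label routine. The paper does not prove a conditional zoom limit given the outside field and then disintegrate; it introduces two bump functions $\phi_1,\phi_2$ supported away from $x$ (one on each side), writes $h=\alpha_1\phi_1+\alpha_2\phi_2+h_0$ with $\alpha_1,\alpha_2,h_0$ independent, observes that given $h_0$ the lengths $\nu_h(\partial U_i\cap\R)$ have \emph{continuous} densities $\psi_i$ as functions of $\alpha_i$, and then shows that re-randomizing $h$ on $B_{\bar\eps}(x)$ conditioned on $(L_1,L_2)$ is the same as re-randomizing without conditioning except for a Radon--Nikodym weight $\psi_1\bigl(L_1-\cdots\bigr)\psi_2\bigl(L_2-\cdots\bigr)$, which tends to a constant as $\bar\eps\to0$. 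That continuity-of-density estimate is the analytic heart of the proposition, and nothing in your argument plays its role. Concretely, your final disintegration step is invalid as a general implication: convergence of the joint law of $\bigl(\mathcal S_{h^*+C/\gamma},L_1,L_2\bigr)$ to the product $W\otimes\mathrm{Law}(L_1,L_2)$ does \emph{not} imply convergence of the conditional law of $\mathcal S_{h^*+C/\gamma}$ given $(L_1,L_2)=(\ell_1,\ell_2)$ for a.e.\ $(\ell_1,\ell_2)$. For a counterexample, let $Y$ be uniform on $[0,1]$ and let $X_C$ be the $\lfloor C\rfloor$-th binary digit of $Y$: the joint law of $(X_C,Y)$ converges to the product of a fair Bernoulli law with the uniform law, yet the conditional law of $X_C$ given $Y=y$ converges for almost no $y$ (along any fixed sequence). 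Asymptotic independence in law is strictly weaker than what Proposition \ref{p.gammawedge2} asserts.

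The parenthetical repair does not close this. L\'evy's upward theorem gives, for each fixed $C$, that $E[F(\mathcal S_C)\mid\mathcal F_{\delta}]\to E[F(\mathcal S_C)\mid\mathcal F_{0+}]$ as $\delta\downarrow0$, and your first step gives, for each fixed $\delta$, that $E[F(\mathcal S_C)\mid\mathcal F_{\delta}]\to W(F)$ as $C\to\infty$; to conclude anything about $E[F(\mathcal S_C)\mid\sigma(L_1,L_2)]$ you must interchange these two limits, and uniform boundedness of the conditional expectations gives no such interchange. The obstruction is precisely the content of the proposition: unlike $L_i^{(\delta)}$, the exact values $L_1,L_2$ encode information about $h$ at arbitrarily small scales around $x$, i.e.\ at the very scales being zoomed into, and one must show that this residual information degenerates to (at most) an additive constant in the limit. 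The paper's $\psi_1\psi_2$-weighting argument is exactly such a quantitative statement, and some substitute for it (a local absolute-continuity estimate for the conditional law of the field near $x$ given $(L_1,L_2)$, with a density tending to a constant) is what your proof is missing. By contrast, your earlier steps are fine at the paper's level of rigor: the conditional form of Proposition \ref{p.gammawedge} given $\mathcal G_\delta$ (the harmonic part is a.s.\ smooth at $x$ and only its value at $x$ survives the zoom) rests on the same local robustness the paper itself invokes to deduce Proposition \ref{p.gammawedge}, and the averaging and $\delta\downarrow0$ steps leading to the product limit are correct.
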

\begin{proof}
We use the setup and notation of Proposition \ref{p.gammawedge}.  Recall the explicit form of the weighted measure $\nu_h[a,b]dh$ given in \cite{2008arXiv0808.1560D}, as described earlier.  Once we condition on $x$, the conditional law of $h$ is simply that of a zero boundary GFF (except with free boundary conditions along $\R$) {\em plus} a random harmonic function {\em plus} $-\gamma \log|x - \cdot|$. Roughly speaking, the proposition follows from the fact that the restriction of $h$ to an extremely small neighborhood of $x$ (which tells us what the quantum surface looks like when we zoom in near $x$) is almost independent of the pair $(L_1, L_2)$.

To express this point more carefully, consider smooth functions $\phi_1$ and $\phi_2$ on $\partial D$ that are supported on disjoint neighborhoods $U_1 \subset D$ and $U_2 \subset D$ with $\overline{U_1} \cap \R \subset (a,x)$ and $\overline{U_2} \cap \R \subset (x,b)$.  We assume (for $i \in \{1,2\}$) that each $\phi_i$ is equal to one on some interval of $\overline{U_i} \cap \R$ and $\phi_i(z) \in [0,1]$ for all $z \in D$, and also that $x \not \in \overline{U_1} \cup \overline{U_2}$.

Now, by the definition of the GFF we can write $h = \alpha_1 \phi_1 + \alpha_2 \phi_2 + h_0$ where $\alpha_1$ and $\alpha_2$ are centered Gaussian random variables and $h_0$ is projection of $h$ onto the orthogonal complement of the span of $\phi_1$ and $\phi_2$, so that $(h_0, \phi_1)_\nabla = (h_0, \phi_2)_\nabla = 0$ almost surely.  In this construction, $\alpha_1$, $\alpha_2$ and $h_0$ are independent of each other.  Recalling \eqref{e.nuhplusphi}, we have, for $i \in \{1,2\}$, that the restriction of $\nu_h$ to $\partial U_i$ is given by $$\nu_h = e^{\frac{\gamma}{2} \alpha_i \phi_i}\nu_{h_0}.$$

In particular, this implies that once we condition on $h_0$, each $\nu_h(\partial U_i \cap \R)$ is a.s.\ given by an increasing smooth function of $\alpha_i$. Here the smoothness can be verified by differentiating with respect to $\alpha$ and noting that no matter how many times one differentiates one obtains a compactly supported test function integrated against $\nu_{h_0}$. This in particular implies that, once we condition on $h_0$, the quantity $\nu_h(\partial U_i \cap \R)$ has a law which is absolutely continuous with respect to Lebesgue measure on $(a,\infty)$ (for some $a$ that depends on $h_0$) and has a smooth density function.  Let $\psi_1$ and $\psi_2$ be the density functions for these laws.  If we fix $h_0$ outside of some small $B_{\bar \eps}(x)$ disjoint from $U_1 \cup U_2$ but rerandomize the restriction $h_0'$ of $h_0$ to $B_{\bar \eps}(x)$ --- conditioned on $(L_1, L_2)$ --- then this is the same as rerandomizing $h_0'$ without conditioning on $(L_1,L_2)$, except that we weight the law by (a quantity proportional to) the product below (viewed as a function of $h'_0$) $$\psi_1\Bigl(L_1 - \nu_h \bigl([a,x] \setminus \partial U_1 \bigr) \Bigr) \psi_2\Bigl(L_2 - \nu_h \bigl([x,b] \setminus \partial U_2 \bigr) \Bigr).$$  As $\bar \eps$ tends to zero, the amount by which resampling $h_0'$ changes either $\nu_h[a,x]$ or $\nu_h[x,b]$ is a quantity that tends to zero in probability. We conclude that weighting the law of $h_0'$ by the big expression above (which is a smooth bounded function of $\nu_h[a,x]$ and $\nu_h[b,x]$) affects this law by an amount that (in total variation sense) tends to zero as $\eps \to 0$.
\end{proof}

In order to prove Theorem \ref{t.lengthstationary}, we will follow the argument sketched in Figure \ref{triplezoom} in Section \ref{ss.lengthstationary}.  We begin with a lemma:

\begin{lemma} \label{l.weightedconditionalzipping} Suppose we first sample $(h^t, \eta^t)$ from $\nu_h[-\delta,0]\Gamma$ (normalized to be a probability measure; here we write $h=h^0$) and then sample $x$ from $\nu_h[-\delta, 0]$.  Consider the joint law of the $x$ and the $(h^t, \eta^t)$ process chosen in this way.  Then given $x$, the conditional law of the zipping up process $f_t$ is that of the modified SLE process in Theorem \ref{reversecouplingkapparho}, with the $\density$ measure given by $\gamma^2$ times a Dirac distribution at $x$ minus $\gamma^2$ times a unit of uniformly distributed mass on the unit circle.
Moreover (as in Proposition \ref{p.kapparhozipping}), given $f_t$ for some time $t \geq 0$, the conditional law of $h^0$ is that of $\widetilde h \circ f_t +{\hat \h}_t$, as defined in the statement of Theorem \ref{reversecouplingkapparho}.
\end{lemma}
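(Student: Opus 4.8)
The plan is to identify the conditional law of $h^0$ given $x$ as an explicit Cameron--Martin shift of the $\Gamma$-law of $h^0$, to recognize that shift as the one appearing in Theorem \ref{reversecouplingkapparho} for the measure $\density = \gamma^2\delta_x - \gamma^2\sigma$ (where $\sigma$ is the uniform probability measure on $\partial B_1(0)$ and $\gamma = \sqrt\kappa$), and then to invoke Proposition \ref{p.kapparhozipping} --- after a mollification that removes the atom at $x$ --- to read off the joint conditional law of $\bigl((f_t)_{t\ge 0},\,h^0\bigr)$. The first step is to invoke the explicit description of the rooted boundary Liouville measure from \cite{2008arXiv0808.1560D}, namely the computation underlying Proposition \ref{p.gammawedge} carried out for the free boundary GFF on $\H$ normalized to have mean zero on $\partial B_1(0)$. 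It yields that under the procedure of the lemma the marginal law of $x$ is absolutely continuous on $[-\delta,0]$, and that, conditioned on $x$, the law of $h^0$ is the $\Gamma$-law of $h^0$ with the deterministic function
$$ v_x := -\gamma\log|x-\cdot| + \gamma\log\max(|\cdot|,1) $$
added; equivalently it is the $\Gamma$-law of $h^0$ reweighted by (a normalizing constant times) $\exp\bigl(\tfrac{1}{2\sqrt\kappa}(h^0,\density)\bigr)$ with $\density = \gamma^2\delta_x - \gamma^2\sigma$. Here the atom $\gamma^2\delta_x$ produces the usual Liouville shift $-\gamma\log|x-\cdot|$ (recall $G^{\H_F}(x,\cdot) = -2\log|x-\cdot|$ for $x\in\R$), while the $-\gamma^2\sigma$ term supplies exactly the harmonic correction $+\gamma\log\max(|\cdot|,1)$ forced by the mean-zero-on-$\partial B_1(0)$ normalization and, at the same time, makes $\density$ have total mass zero.

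Since the weighting $\nu_h[-\delta,0]$ and the sampling of $x$ from $\nu_h|_{[-\delta,0]}$ are measurable functions of $h^0$ alone, conditioned on $x$ the pair $\bigl(h^0,(f_t)_{t\ge 0}\bigr)$ has the law obtained from the $\Gamma$-coupling by reweighting its $h^0$-marginal as above (and $x$ is conditionally independent of $(f_t)_{t\ge0}$ given $h^0$). In other words, conditioned on $x$, this law is the measure $\Gamma_{\density'}$ of Proposition \ref{p.kapparhozipping} with exponent $\density' = \tfrac{1}{2\sqrt\kappa}\density$; and by the same complete-the-square identity that defines $\hat\h_t$ in Theorem \ref{reversecouplingkapparho}, $v_x = \tfrac{1}{2\sqrt\kappa}\int G^{\H_F}(\cdot,y)\density(y)\,dy$ is precisely $\hat\h_0 - \h_0$ for the measure $\density = \gamma^2\delta_x - \gamma^2\sigma$. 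Proposition \ref{p.kapparhozipping} then gives that the conditional (on $x$) law of $(f_t)_{t\ge0}$ is the modified reverse Loewner flow of Theorem \ref{reversecouplingkapparho} with this $\density$, i.e.\ the flow driven by $dW_t = \bigl(-\Re(f_t)^{-1},\density\bigr)\,dt + \sqrt\kappa\,dB_t$, and that, given $f_t$ for any $t\ge0$, the conditional law of $h^0$ is that of $\widetilde h\circ f_t + \hat\h_t$ --- which is exactly the lemma.

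One gap must be filled: $\density$ (hence $\density'$) has infinite Dirichlet energy because of the atom at $x$, so Proposition \ref{p.kapparhozipping} does not apply verbatim. I would close it by the approximation indicated after Theorem \ref{reversecouplingkapparho}: replace $\delta_x$ by the uniform probability measure on the semicircle $\H\cap\partial B_{\bar\eps}(x)$, apply Proposition \ref{p.kapparhozipping} to this finite-energy exponent, and let $\bar\eps\to 0$. Harmonicity of $G_t(\cdot,\cdot)$ and of $\Re(f_t)^{-1}$ off the support of the measure, continuity in $x$ of the rooted-field law from the first step, and the almost sure convergence used to define $\nu_h$ together ensure that the driving term $dW_t$, the field $\hat\h_t$, and the weighted law converge to those for $\density$.

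The step I expect to be the main obstacle is the first one: extracting from \cite{2008arXiv0808.1560D} the precise disintegration of the rooted measure over $[-\delta,0]$ --- recalling that $\nu_h$ is a singular random measure, so the conditioning on $x$ must be done through that disintegration and not via a density --- and pinning down the normalization term $+\gamma\log\max(|\cdot|,1)$ (equivalently the $-\gamma^2\sigma$ in $\density$). Once the conditional law of $h^0$ is in Cameron--Martin form, the remaining steps are bookkeeping on top of the already-available Theorem \ref{reversecouplingkapparho} and Proposition \ref{p.kapparhozipping}.
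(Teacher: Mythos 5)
Your identification of the target objects is right: the weight corresponding to rooting at $x$ (with $h$ normalized by $h_1(0)=0$) is indeed $e^{\frac{\gamma}{2}(h(x)-h_1(0))}$, formally $e^{(h,\frac{1}{2\sqrt\kappa}\density)}$ with $\density=\gamma^2\delta_x-\gamma^2\sigma$, and the Cameron--Martin shift $-\gamma\log|x-\cdot|+\gamma\log\max(|\cdot|,1)$ is exactly the paper's; the overall skeleton (reduce to Theorem \ref{reversecouplingkapparho} via Proposition \ref{p.kapparhozipping}) is also the paper's. But the step you dismiss as an approximation to be "closed by mollification" is where the real content of the lemma lies, and as written it has a genuine gap. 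The conditional law of $h^0$ given $x$ (the $-\gamma\log|x-\cdot|$ shift of $\Gamma$) and the mollified laws $\Gamma_{\density_{\bar\eps}}$ are mutually \emph{singular}: the difference of the two shifts is $-\gamma\log(|x-\cdot|/\bar\eps)$ on $B_{\bar\eps}(x)$, which has infinite Dirichlet energy. So you cannot transfer the conclusion of Proposition \ref{p.kapparhozipping} to the target measure by any absolute-continuity argument, and conditional laws do not pass through weak limits automatically; harmonicity of $\Re(f_t)^{-1}$ only tells you the drift in \eqref{e.dWkapparho} is the same for every $\bar\eps$, not that the $x$-conditional law of $(f_t)$ under the $\nu_h$-rooting coincides with it. Note also that at this stage of the paper $f_t$ is not known to be a function of $h^0$, so the statement to be transferred concerns the joint law of $(h^0,f_\cdot)$, which makes the interchange of limits even less innocuous.

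The paper bridges this exactly where you wave: it works with the approximating weight $\nu^\eps_h$ (so that the rooted law is \emph{exactly} $e^{\gamma(h_\eps(x)-h_1(0))/2}\,dx\,dh$ and Proposition \ref{p.kapparhozipping} applies with $\density$ equal to uniform measure on $\partial B_\eps(x)$ minus uniform measure on $\partial B_1(0)$, times $\gamma^2$), and then transfers from $\nu^\eps_h$ to $\nu_h$ by the $\Harm_U/\Supp_U$ decomposition: the conditional law of $f_t$ up to the time $T_U$ that $f_t(U)$ reaches the origin depends on $h$ only through its projection onto $\Harm_U$, and rooting by $\nu^{\eps'}_h$ or by $\nu_h$ induces the \emph{same} conditional law of that projection given $x$ (the $\Supp_U$ part is independent of $(x$-conditioned$)$ everything and only contributes a deterministic factor to the marginal density of $x$). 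That exact identity is what replaces your limiting argument; only at the very end does $\eps\to 0$ enter, harmlessly, to state the drift in the form \eqref{e.dwtcirclepoint}. Your route could in principle be salvaged by proving joint weak convergence of the $\nu^{\bar\eps}_h$-rooted laws of $(x,h^0,f_{\cdot\wedge T})$ together with continuity in $x$ of the ($\bar\eps$-independent) kernel, but that argument is not in your sketch; also, contrary to your closing remark, the DS disintegration itself is not the main obstacle --- the paper never forms the singular conditional object at all, precisely to avoid the issue above.
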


\begin{proof}
Recall \eqref{e.nudef} and define
\begin{equation} \label{e.nueps} \nu^\eps_h :=
\eps^{\gamma^2/4}e^{\gamma h_\eps(x)/2}dx.\end{equation}
Suppose that for some positive $\delta' < \delta$ and $\eps < \delta'$ we weight by $\nu^\eps_h[-\delta,-\delta']$ instead of $\nu_h[-\delta, 0]$ --- and then, given $h$, we choose $x$ from $\nu_h^\eps$ restricted to $[-\delta,-\delta']$ and normalized to be a probability measure.  Then we know that this is the same as sampling from \begin{equation} \label{e.egammaheps} e^{\gamma [h_\eps(x)-h_1(0)]/2} dxdh\end{equation} (times a normalizing constant) where $x$ is Lebesgue on $[-\delta, \delta']$ and $dh$ is the $\Gamma$ law of $h$.  (We have $h_\eps(x) - h_1(0)$ instead of $h_\eps(x)$ in the exponent because the former is defined independently of the additive constant for $h$, and is what we obtain if we choose the additive constant so that $h_1(0)=0$.)  Thus, given $x$, the conditional law of $h$ is $e^{\gamma ( h_\eps(x)-h_1(0))/2}dh$, and Proposition  \ref{p.kapparhozipping} exactly determines the law of $f_t$ (where the relevant $\density$ measure is a uniform measure on $\partial B_\eps(x)$ minus a uniform measure on $\partial B_1(0)$).

The above gives an explicit way to construct a sample from $\nu^\eps_h[-\delta, -\delta']$ and the corresponding $f_t$.  To establish the lemma, we need to argue that an analogous result holds when $\nu^\eps_h$ is replaced by $\nu_h$.  One way to deduce the $\nu_h$ result from the $\nu^\eps_h$ result is as follows.  Let $U$ be the $\eps$ neighborhood of $[-\delta, -\delta']$ in $\H$, as in Figure \ref{Usketch}.  Decompose the free boundary GFF Hilbert space into the space $\Supp_U$ (the closure of the space of smooth functions supported on $U$ that vanish on $\partial U \setminus \R$, with free boundary conditions on $\partial U \cap \R$) and the space $\Harm_U$ (the orthogonal complement of $\Supp_U$, which consists of functions that are harmonic on $U$ with Neumann boundary conditions along $\R$ \cite{SchrammSheffieldGFF2}).  Let $h^{\mathrm{HARM}}$ be the projection of $h$ onto $\Harm_U$ and $h^{\mathrm{SUPP}}$ the projection of $h$ onto $\Supp_U$, so that \begin{equation} \label{e.hdecomp} h =h^{\mathrm{HARM}}+ h^{\mathrm{SUPP}}.\end{equation}  Roughly speaking, $h^{\mathrm{HARM}}$ represents the conditional expectation of $h$ given all of the values of $h$ outside of $U$.  Note that $h^{\mathrm{SUPP}}$, which is independent of $h^{\mathrm{HARM}}$, has the law of a GFF on $U$ with zero boundary conditions on $\partial U \setminus \R$ and free boundary conditions on $\partial U \cap \R$.

\begin {figure}[htbp]
\begin {center}
\includegraphics [width=3in]{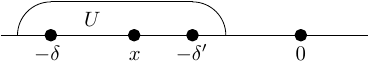}
\caption {\label{Usketch} $U \subset \H$ is an $\eps$ neighborhood of $[-\delta, -\delta']$.}
\end {center}
\end {figure}

If we take any $\eps' < \eps$ then the measure $e^{\gamma h_{\eps'}(x)/2}dh$ (normalized to be a probability measure) induces a law for $h^{\mathrm{HARM}}$ that does not depend on $\eps'$.  We claim that the $\Gamma$ law of $f_t$ (up until the first time $T_U$ that a point in $f_t(U)$ reaches the origin) is independent of the projection of $h=h^0$ onto $\Supp_U$.  This follows from Theorem \ref{reversecoupling} and the way we constructed $\Gamma$: we may sample $h$ and $f_{T_U}$ by first sampling $f_{T_U}$, then sampling $h_{T_U}$ from the law of $\widetilde h + \h_0$, and then using a coordinate transformation via $f_{T_U}^{-1}$ to recover $h^0$.  The decomposition \eqref{e.hdecomp} can be made using $f_{T_U}(U)$ instead of $U$ before we apply this coordinate transformation.  It then follows from the conformal invariance of the zero boundary GFF (with free boundary conditions along $\R$) that the $h=h^0$ we obtain after the coordinate transformation has a projection to $\Supp_U$ that is independent of $f_{T_U}$.

Thus, the conditional law of $f_{T_U}$ given $h$ depends only on the projection of $h$ onto $\Harm_U$.  Since weighting $\Gamma$ by $\nu^{\eps'}_h[-\delta, -\delta']$ and by $\nu_h[-\delta, -\delta']$ affects the law of the projection of $h$ to $\Harm_U$ in an identical way, the law of this projection, and hence the law of $f_t$ (up until $T_U$), is the same in each weighted measure.  Since this holds for any interval $[-\delta,-\delta']$ we conclude that the joint law of $x$ and $f_t$ (up until $T_U$) does not change if we replace $\nu_h^{\eps}$ with $\nu_h$.  To be explicit now about what we get from Proposition  \ref{p.kapparhozipping}, recall from \eqref{e.dWkapparho} and the subsequent discussion that we have $d f_t(z)  =  \frac{-2}{f_t(z)}dt - dW_t,$ where $$ dW_t = \bigl( -\Re (f_t)^{-1}, \density \bigr)dt + \sqrt{\kappa}dB_t,$$ in the case that we weight by  $e^{(h, \frac{1}{2\sqrt{\kappa}} \density)}$, the weight from \eqref{e.densityweight}.  In our case, we take $\density$ to be such that $$(h, \frac{1}{2\sqrt{\kappa}} \density) = \frac{\gamma}{2} (h_{\eps}(x) - h_1(0)),$$ which implies $$(h, \density) = \gamma^2 (h_{\eps}(x) - h_1(0)),$$
and the lemma follows by taking $\eps$ to zero.
\end{proof}

\proofof{Theorem \ref{t.lengthstationary}}
We begin by analyzing the construction described in Lemma \ref{l.weightedconditionalzipping} in more detail.  By Lemma \ref{l.weightedconditionalzipping}, we find that if $h$ and $x$ are taken from the measure \eqref{e.egammaheps}, then given $x$, the law of the zipping up procedure in Figure \ref{triplezoom} (up until $f_t(x+\eps)$ reaches zero) is determined by \begin{equation} \label{e.dwtcirclepoint} dW_t =  \Bigl(- \frac{\gamma^2}{f_t(x)} + \Re \int_0^\pi \frac{ \gamma^2}{ f_t(e^{i\theta})} \frac{d\theta}{\pi} \Bigr) dt + \sqrt{\kappa} dB_t.\end{equation}
As explained in the proof of Lemma \ref{l.weightedconditionalzipping}, this law is independent of $\eps$ and also holds when $\nu^{\eps}_h$ is replaced by $\nu_h$ in the construction of $x$ and $h$ above.

Now if the second term in the expression for the $dt$ piece of \eqref{e.dwtcirclepoint} were not there, then \eqref{e.dwtcirclepoint} would correspond to the one-force-point SLE$_{\kappa, \rho_1}$ with $\rho_1 = \gamma^2 = \kappa$, and the corresponding $f_t(x)$ would evolve precisely as a Bessel process of (recall \eqref{e.besseldim}) dimension
\begin{equation}\label{e.actualbesseldim} 1 + \frac{2(\gamma^2-2)}{\kappa} = 3-\frac{4}{\gamma^2} < 2,
\end{equation}
which would imply that $f_t(x)$ eventually reaches zero almost surely \cite{MR2000h:60050}.  Moreover, by \eqref{e.rhorho'}, the zipped up curve obtained, at the time $x$ hit zero, would look like ordinary SLE up to some time with a size of order $\delta^2$.  For the next point in the proof it is (slightly) easier if we replace the mean value on $\partial B_1(0) \cap \H$ (in Figure \ref{triplezoom} and in \eqref{e.dwtcirclepoint}) with the mean value on an arc of $\partial B_1(0) \cap \H$ that is bounded away from $\R$.  (The statement and proof of Lemma \ref{l.weightedconditionalzipping} do not change if we replace $\partial B_1(0) \cap \H$ with such an arc.)  In this case, since $\Re f_t(z)$ is increasing in $t$ for each $z \in \H$, the magnitude of the second term in the $dt$ piece of \eqref{e.dwtcirclepoint} is a.s.\ bounded by a constant for all $t$.  Thus when we zoom in (as in Figure \ref{triplezoom}), the effective drift from this term in the zoomed-in process tends uniformly to zero.

We now claim that the limit sketched on the right in Figure \ref{triplezoom} is a $(\gamma-2/\gamma)$-quantum wedge with an independent SLE$_\kappa$ curve. Lemma \ref{l.weightedconditionalzipping} implies that the limiting curve is an SLE$_\kappa$ independent of the field. To see that the field corresponds to a quantum wedge, note that before rescaling it looks like a GFF with a $(\gamma-2/\gamma) (-\log|\cdot|)$ singularity at the origin. (Recall that the $-2\gamma$ corresponds to the log singularity present in the capacity invariant model, and the $\gamma$ comes from the extra force point that has just collided with the origin.) The fact that the limit is then a quantum wedge follows from the argument in Proposition \ref{p.gammawedge} (which shows generally that if $h$ looks like a free boundary GFF plus a log singularity at some point, then one obtains a quantum wedge when one zooms in near that point). We also claim that in this limiting object (the $(\gamma-2/\gamma)$-quantum wedge decorated by an independent SLE$_\kappa$ curve) the two quantum surfaces divided by the curve each have the law of a $\gamma$-quantum wedge. This follows for the left side from Proposition \ref{p.gammawedge} and for the right side by symmetry. (Note that a $(\gamma-2\gamma)$-quantum wedge decorated by an independent SLE$_\kappa$ is an object with left-right symmetry.) 

At this point, we need to show that the quantum length measures along the two sides of $\eta$ almost surely agree. Since $x$ was sampled uniformly from quantum measure (before we zipped up and zoomed in) the SLE$_\kappa$ decorated $(\gamma-2/\gamma)$ quantum wedge must be invariant under the operation of unzipping by a fixed quantity of quantum boundary length as measured along the left of the two $\gamma$ quantum wedges. (This is similar to the argument used to show Proposition \ref{p.gammawedgeinvariant}.) Thus the pair $\bigl( (D_1, h_1), (D_2, h_2) \bigr)$ is invariant under zipping and unzipping by the boundary length measure.

Let $F(s)$ denote the quantum length (as measured from the right side) of the curve segment $\eta([0,t])$, where $t$ is chosen so that the quantum length (as measured from the left side) of $\eta([0,t])$ is $s$.  The ergodic theorem implies that $\lim_{s \to \infty} F(s)/s$ exists almost surely; denote this (possibly random) limit by $c$.  Since the law of the $(\gamma-2/\gamma)$-quantum wedge is scale invariant, the fact that $F(s) \approx cs$ holds for large scales (by the ergodic theorem) implies that the same holds on all scales, and indeed we must have identically $F(s) = cs$.  Since $c$ is determined by the restriction of $\eta$ and $h$ to arbitrarily small balls centered at zero. In other words, if $\mathcal F_\delta$ denotes the $\sigma$ algebra generated by the restiction of $h$ to $B_\delta(0)$ and the curve $\eta$ stopped when it first exits $B_\delta(0)$, then $c$ is measurable w.r.t.\ the intersection $\cap_{\delta > 0} \mathcal F_\delta$. However, it is easy to see that events in $\cap_{\delta > 0} \mathcal F_\delta$ have probability zero or one, so that $c$ must be a.s.\ constant. (This is proved explicitly in Lemma 8.2 of \cite{wedgespaper} in a setting that includes the field but not the path $\eta$. The statement including the path $\eta$ then follows from the well known fact that if $\mathcal G_\delta$ is generated by the restriction of Brownian motion to $[0, \delta]$ then events in $\cap_{\delta > 0}\mathcal G_\delta$ have probability zero or one.)  By the left-right symmetry of the law of SLE$_\kappa$ and the quantum wedge, we have $c = 1$ almost surely.

Now that we know that the length measures on the two sides agree, we next claim that each of the two sides in the limit sketched on the right in Figure \ref{triplezoom} are {\em independent} $\gamma$-quantum wedges.  We will deduce this by applying Proposition \ref{p.gammawedge2} to Figure \ref{triplezoom}.  More precisely, we consider the setting of the upper image in Figure \ref{triplezoom} and we condition on the restriction of the GFF to the complement of $B^1:= B_{\overline \eps}(x)$ and $B^2:= B_{\overline \eps}(R(x))$, and we also condition on the quantum lengths $\nu_h (B^1 \cap \R)$ and $\nu_h (B^2 \cap \R)$. We may choose $\delta$ small enough so that with high probability $R(x) \in B_1(0)$, and we will focus our attention on the event that this is the case. If $\overline \eps$ are small then with high probability these balls do not contain the origin and are contained in $B_1(0)$.

If we re-sample the restriction of $h$ to $B^1 \cap \H$ --- conditioned on $\nu_h  (B^1 \cap [x,\infty) )$ and $\nu_h(B_1 \cap (-\infty, x])$ and the restriction of $h$ to the complement of $B^1$ --- then Proposition \ref{p.gammawedge2} implies that even with this conditioning it is a.s.\ the case that the zoomed-in figures (as in lower left in Figure \ref{triplezoom}) converges in law to a $\gamma$-quantum wedge as $\overline \eps \to 0$.  The conditional law of $h$ (just given its values outside of $B^1$) is that of a random harmonic function plus a GFF on $B^1 \cap \H$ with free boundary conditions on $\R$ and zero boundary conditions on $\partial B^1 \cap \H$ \cite{Sh}. 

The same applies when we zoom in near $R(x)$. Now if we condition on the GFF values on $\partial B_1(0)$ together with the values on the line $i \mathbb R$, together with the lengths of $[-1,x]$ and $[x,0]$ and $[0,R(x)]$ and $[R(x), 1]$, then the conditional law of the restrictions of $h$ to the two halfs of $B_1(0) \cap \H$ are independent by the standard GFF Markov property.  A similar analysis to the above shows that even with this conditioning, one still obtains the laws of quantum wedges, near each of $x$ and $R(x)$, upon zooming in near those points. We conclude that in the $\overline \epsilon \to 0$ limit the $\gamma$-quantum wedges are independent of each other. This completes the proof of the items in the first paragraph of the theorem statement. Along the way we have also established the invariance of the law of the pair $\bigl( (h_1, D_1), (h_2 D_2) \bigr)$ under zipping up or down by a unit of quantum length  zipper stationarity, and the $\zlength_t$ properties are now immediate from this.
 \qed
\vspace{.1in}

Deriving Theorem \ref{conformalwelding} as a consequence of Theorem \ref{t.lengthstationary} is now fairly straightforward.  Essentially one uses absolute continuity of the corresponding fields (at least when restricted to certain compact sets, away from the origin) to say that that if the left and right boundary lengths a.s.\ agree in the setting of Theorem \ref{t.lengthstationary} then they almost surely agree in the setting of Theorem \ref{conformalwelding} as well.

\proofof{Theorem \ref{conformalwelding}}
The above proof shows that if one draws an independent SLE$_\kappa$ on a $(\gamma-2/\gamma)$-quantum wedge, the quantum lengths measured along the left and right sides of the curve agree almost surely.  The setting of Theorem \ref{conformalwelding} is different because the law of $h$ is different. On the other hand, we would like to argue that it is not {\em so} different --- i.e., that the laws of the two random fields (restricted to a compact set, bounded away from the boundary) are absolutely continuous w.r.t.\ each other, so that a statement that is a.s.\ true for one is a.s.\ for the other.

That is, we would like to say that the $h$ of Theorem \ref{conformalwelding} ``looks like'' the canonical description $h$ of a $(\gamma-2/\gamma)$-quantum wedge, at least in the sense of absolute continuity of the restriction to compact subsets of $\H$.  This is most immediate for compact sets that lie outside of $B_1(0)$.  If we consider a canonical description $h$ of a quantum wedge, then we know that by definition $\mu_h(B_1(0)) = 1$. However, once we condition on the restriction of $h$ to $B_1(0)$, the conditional law of $h$ restricted to $\H \setminus \overline{B_1(0)}$ is that of a GFF (with zero boundary conditions on $\partial B_1(0)$, free elsewhere) plus a certain random smooth function on $\H \setminus \overline{B_1(0)}$.   (Note that if $h$ is a canonical description, it will remain a canonical description even after we resample its values outside of $B_1(0)$ in this way.)  In particular, the restriction of $h$ to some compact $D \subset \H \setminus \overline{B_1(0)}$) is absolutely continuous with respect to the restriction to $D$ of the $h$ of Theorem \ref{conformalwelding}, defined up to additive constant. (This is immediate from the absolutely continuity results in Section 3.1 of \cite{SchrammSheffieldGFF2}.)  We thus find that in the setting of Theorem \ref{conformalwelding} the left and right quantum lengths along $\eta$ are almost surely equal, at least outside of $B_1(0)$.  The object in Theorem \ref{conformalwelding} is scale invariant; hence, if the left and right quantum boundary lengths a.s.\ agree outside of a unit ball of radius $1$, then they a.s.\ agree outside of a unit ball of any radius, which means that they a.s.\ agree everywhere along the SLE$_\kappa$ curve. \qed
\vspace{.1in}

\begin{remark} The above argument also holds if we add any smooth function to $h$ --- which affects the restriction of $h$ to each compact subset of $\H$ in an absolutely continuous way (see \cite{Sh,SchrammSheffieldGFF2}) --- and then independently draw any path whose law is absolutely continuous w.r.t.\ that of SLE. That is, even in this setting one can use absolute continuity results about the GFF to prove that the quantum lengths along the two sides of the path agree almost surely.  In particular, Theorems \ref{conformalwelding} and \ref{conformalweldingunique} still apply in the setting of Theorem \ref{reversecouplingkapparho}.\end{remark}

\section{Twenty questions} \label{s.questions}
 {\it Update:  Since the first version of this paper was posted to the arXiv in 2010, there has been progress on several of the questions listed below. The current version leaves the questions as they were in 2010 but includes brief updates on work completed since.}
\vspace{.1in}

Many of the most fundamental questions about quantum gravity are open.  Before formulating some of these questions, we present some definitions that will appear in the questions.

We have already seen that quantum wedges are natural random quantum surfaces of infinite area and infinite boundary length.  We now describe some natural random quantum surfaces of unit area or unit boundary length, in terms of limits.  We will say more about the sense in which the limits exist below, and more detailed constructions appear in \cite{wedgespaper}.

\begin{enumerate}
\item Fix a smooth bounded domain $D$.  Let $h$ be a GFF with free boundary conditions on a linear segment $L$ of $\partial D$ and zero boundary conditions on $\partial D \setminus L$.  Fix $C>0$ and condition on $\mu_h(D) = C$.  Let $\hat h =  h - (\log C)/\gamma$, so that $\mu_{\hat h}(D) = 1$.  We claim that as $C \to \infty$, the law of $(D, \hat h)$ (viewed as a quantum surface) tends to a limit that does not depend on $D$ or $L$.  Call this the {\bf unit area quantum disc}.
\item The {\bf unit boundary length quantum disc} is defined the same way except that we condition on $\nu_h(L) = \sqrt C$ and the normalization gives $\nu_{\hat h}(L) = 1$.
\item The {\bf unit area quantum sphere} is defined the same way as the unit area quantum disc except that we take $D$ to have zero boundary conditions on all of $D$.
\end{enumerate}

In the case of the discs (the first and second objects described above), one way to formulate the convergence is to fix a point $x \in D \setminus L$ and then to choose $x_1$ and $x_2$ on $L$ at random from $\nu_{\hat h}$.  We change coordinates via \eqref{Qmap} to $\H$ so that $x_1$, $x_2$, and $x$ map respectively to $0$, $1$, and $\infty$ (see the left side of Figure \ref{unitsize}).  We then obtain a random measure on $\H$, and these unit random measures should converge in law with respect to the topology of weak convergence of measures on $\H$.

\begin {figure}[htbp]
\begin {center}
\includegraphics [width=5in]{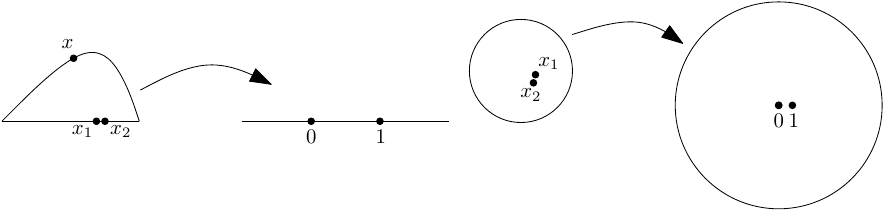}
\caption {\label{unitsize} Constructing unit area/length quantum disc (left) or unit area quantum sphere (right).  When $C$ is large, we expect $x_1$ and $x_2$ to be close with high probability, and for most quantum area to be near those points.}
\end {center}
\end {figure}

In the case of the sphere, one can choose two points $x_1$ and $x_2$ from $\mu_{\hat h}$ and conformally map $D$ to an origin-centered disc in such a way that one point goes to to $0$ and one to $1$.  (This determines the radius of the disc; see the right side of Figure \ref{unitsize}.)  We then obtain a random measure on $\C$ (which is incidentally supported on a disc of finite radius; the radius of this supporting disc tends to $\infty$ as $C \to \infty$), and these random measures should converge in law with respect to the topology of weak convergence of measures on $\C$.

We now present some conjectures and questions.  Some are very concrete and specific, and some are more speculative and open-ended.  Some of the physics questions have of course been extensively discussed in the physics literature already, but this literature is far too broad for us to survey here (though we can at least mention Polyakov's reflections, with references, on the history of the subject in \cite{2008arXiv0812.0183}, as well as the extensive reference list in \cite{2008arXiv0808.1560D}).

\vspace{.1in} \begin{center}{\bf Conjectures relevant to discrete model scaling limits   } \end{center}
\begin{enumerate}
\item \label{i.scalinglimitlist} Discrete random planar map models have Liouville quantum gravity as a scaling limit in the metric of weak convergence of area measures.  To be more specific, in addition to the conjectures in Section \ref{s.interpretation} and in \cite{2008arXiv0808.1560D}, we conjecture the following: \begin{enumerate}
    \item The uniform quadrangulation of a sphere with $n$ quadrilaterals scales to the unit area quantum sphere with $\gamma = \sqrt{8/3}$.  One way to formulate this is that if we choose three points uniformly on the quadrangulation and conformally map the quadrangulated surface to $\C$ with these three points going to $0$, $1$, and $\infty$, then the image of the area measure in $\C$ (normalized to the total area is one) converges in law (w.r.t.\ the topology of the local weak convergence) to the unit area quantum sphere measure described above.
    \item   The random quadrangulation of a disc with boundary length $n$, where the probability of a quadrangulation is proportional to a (critical) constant to the number of quadrilaterals, scales to the unit boundary length quantum disc with $\gamma = \sqrt{8/3}$.
    \item   The random quadrangulation of a disc with $n$ quadrilaterals, where the probability of a quadrangulation is proportional to a (critical) constant to the boundary length, scales to the unit area quantum disc with $\gamma = \sqrt{8/3}$.
    \item Similar statements hold for random quadrangulations weighted by the partition functions of Ising configurations, $O(n)$ configurations, uniform spanning trees, etc., with $\gamma = \sqrt{\kappa}$ for the appropriate $\kappa$.  In each case, the cluster boundary loops (of the statistical model on the random surface) scale to a CLE$_\kappa$ independent of the measure.  (Question: what happens if, for some $d$, we make the probability of a quadrangulation proportional to the partition function of the $d$-dimensional discrete Gaussian free field on the corresponding graph?)
    \end{enumerate}
{\it Update: The problem as stated remains open. However, convergence of FK-decorated random planar maps to CLE-decorated LQG has been established in the so-called peanosphere topology and in the so-called loop structure topology, in both infinite and finite volume settings \cite{2011arXiv1108.2241S, dms2014mating, quantum_spheres, gwynnemaosun, finitevolumeestimates, strongertopology, finitevolumelimit}, see also related results in \cite{berestyckilaslierray, multiburger, 2015arXiv150201013C, sunwilsonsandpile}. The construction of finite volume LQG spheres and disks (briefly considered above) is discussed in more detail in \cite{dms2014mating, lqg_sphere, quantum_spheres, twoperspectives}. Partial results in the direction of understanding the conformal structure of the discrete models appear in \cite{curien2014glimpse}. Roman Boikii and Stanislav Smirnov have also described progress in private communication.}
\item \label{i.quantummetric} A quantum surface is a coordinate-change-invariant random metric space in the sense of the footnote in Section \ref{ss.randomgeometryGFF}.  In the special case $\gamma = \sqrt{8/3}$ this is equivalent to the {\em Brownian map} (see e.g.\ \cite{MR2336042, MR2438999, MR2399286}), which is a term given to the random metric space scaling limit of uniform quadrangulations on the sphere.
{\it Update: The metric space structure of LQG has now been constructed in the pure quantum gravity case $\gamma = \sqrt{8/3}$ in recent work (some in progress) showing that $\sqrt{8/3}$-LQG is equivalent to the Brownian map \cite{ms2013qle, qlebm,qle_continuity, qle_determined}. No such construction is currently available for other values of $\gamma \in (0,2)$.}
\item Assuming that \ref{i.quantummetric} holds, the scaling limits in \ref{i.scalinglimitlist} also hold in the topology of Gromov-Hausdorff convergence of metric spaces.
{\it Update: In light of the equivalence of $\sqrt{8/3}$-LQG and the Brownian map  \cite{ms2013qle, qlebm,qle_continuity, qle_determined}, and the Gromov-Hausdorff convergence of discrete random planar maps to the Brownian map \cite{legalluniqueanduniversal, legalluniversallimit,miermontlimit}, this conjecture has now been established in the pure quantum gravity case $\gamma = \sqrt{8/3}$. It remains open for all other $\gamma \in (0,2)$.}

\vspace{.1in} \begin{center}{\bf Related but more modest requests:} \end{center}

\item Show that simple random walk on a random quadrangulated surface is a good approximation for Brownian motion on that surface.
{\it Update: This problem remains open. However, works by Gill and Rohde, by Gurel-Gurevich and Nachmias, and by Benjamini and Curien have derived some fundamental properties for random walks and Brownian motion on infinite volume versions of these surfaces \cite{GR, gurelgurevichnachmias, benjamini2013simple}.}
\item Prove any SLE convergence result at all for random planar maps. {\it Update: As mentioned above, there are now a number of convergence results involving the peanosphere and loop structure topologies.}
\item Show that the following toy model has a scaling limit which is a metric space.  Begin with a unit square $S$.  After an exponentially random amount of time, divide $S$ into new four squares of equal size.  Each time a new square is created, give it a new exponential clock (independent of the others) and so that it too will divide into four new squares after an exponentially random amount of time.  (Each time a clock at an existing box rings, the total number of boxes increases by $3$.  Thus if $N(t)$ is the number of boxes at time $t$, then $e^{-3t} N(t)$ is a martingale.)  After time $t$, consider two squares to be adjacent if they have part of an edge in common.  Show that for some $\beta>0$, the graph metric on the set of squares times $e^{-\beta t}$ converges almost surely to a random metric space parameterized by $S$. {\it Update: This toy problem remains completely unsolved. However, we remark that another interesting model for producing random squarings appears in \cite{berryleavitt}.}

\vspace{.1in} \begin{center}{\bf Structural questions about Liouville quantum gravity and AC geometry:} \end{center}

\item If a quantum surface is well defined as a metric space, what does the boundary of a unit ball look like?  Is it a form of SLE (some collection of SLE loops) or something completely different?  How about a shortest (geodesic) path between two points $z_1$ and $z_2$?  How about the boundary of the set of points closer to $z_1$ than to $z_2$?  What is the continuum analog of the breadth first search tree (the tree that appears in the Schaeffer bijection \cite{MR1465581})?  Can one even formulate a conjecture?  {\it Update: As mentioned above, much has been now been established in the case $\gamma = \sqrt{8/3}$, where the ball boundaries are constructed using the quantum Loewner evolution \cite{ms2013qle, qlebm,qle_continuity, qle_determined}, and a canonical embedding of the Brownian map in the sphere has been constructed. The problem remains completely open for other $\gamma \in (0,2)$.}
\item If $\kappa \in (4,8)$ then the complement of SLE$_\kappa$, run to time $T$, is not a single simply connected domain; it consists of infinitely many components.  The set of components that lie to the left side of $\eta$ comes with a tree-like hierarchical structure (where a component $A_1$ is ``above'' a component $A_2$ if $\eta$ traces $\partial A_1$ after it has started tracing, but before it has finished tracing, $\partial A_2$).  The set of components on the right side comes with a similar structure.   This suggest that when we ``unzip'' along $\eta$ (as in Theorem \ref{reversecoupling}) we obtain not merely a quantum surface parameterized by $\H$ but a quantum surface parameterized by $\H$ together with a tree-like structure of quantum-surface ``beads'' hanging off of its boundary.  Does an analog of Theorem \ref{conformalwelding} hold in this setting if one properly defines the boundary length of the tree-like structure?  How about an analog of Theorem \ref{conformalweldingunique}?  Proving the latter would likely require first showing that SLE$_{\kappa}$ is removable when $\kappa \in (4,8)$.  Is this the case?  Can a $\kappa \in (4,8)$ version of the quantum zipper be used to prove the time reversal symmetry of SLE for $\kappa \in (4,8)$? {\it Update: The time-reversal symmetry results were established in the imaginary geometry papers \cite{ms2012imag1,ms2012imag2,ms2012imag3,ms2013imag4}, and the questions about the $\kappa \in (4,8)$ analogs of Theorems \ref{conformalwelding} and \ref{conformalweldingunique} were answered affirmatively in \cite{dms2014mating}. Interestingly, the 		zipping up'' results in \cite{dms2014mating} were established without a proof that SLE$_{\kappa}$ is removable when $\kappa \in (4,8)$. It remains an open question whether these SLE$_{\kappa}$ processes are removable. A new result on the inversion symmetry of the quantum zipper welding (for the $\kappa < 4$ case) appears in \cite{rohde2013backward}.
}

\item What is the Hausdorff dimension of a quantum surface (assuming it is defined as metric space) for general $\gamma$?  Can one at least handle the special case $\gamma = \sqrt{8/3}$ (where the answer should be $4$, by analogy to discrete random triangulations; see e.g.\ \cite{MR2013797})? {\it Update: This question is settled in the case $\gamma = \sqrt{8/3}$, in light of the above-mentioned equivalence between the $\sqrt{8/3}$-LQG sphere and the Brownian map. In that case, the Hausdorff dimension is indeed $4$. For general $\gamma$, there is a Hausdorff dimension conjecture due to Watabiki \cite{watabiki1993analytic}, which has found some support in recent simulations \cite{2014NuPhB.889..676A}. This conjecture is further explained and discussed in \cite{ms2013qle}.}

\item Let $h^\eps$ be projections of $h$ onto the space of functions that are piecewise linear on an $\eps$-edge-length triangular lattice (or alternatively, mollifications of $h$ obtained by convolving with a bump function supported on a disc of radius $\eps$).  Do the flow lines of $e^{ih^\eps/\chi}$ converge in law to SLE curves (AC geometry flow lines) as $\eps \to 0$?  If so, this would extend the main result of \cite{MR2486487} to $\kappa \not = 4$.  Do the other results of \cite{MR2486487, SchrammSheffieldGFF2} for $\kappa = 4$ extend to $\kappa \not = 4$?

\item If $h^\eps$ are smooth mollifications or piecewise linear projections (as above) of the {\em free} boundary GFF of Theorem \ref{reversecoupling}, do the curves obtained by zipping up part of the boundary via conformal welding (using the identification map $R$ as in Section \ref{ss.confweldingintro}, except that we use the  approximate measures $e^{\gamma h^\eps(x)/2}dx$ instead of $\nu_h$ to define $R$) converge in law to SLE?

\item What happens when $\gamma = 2$ (so $\kappa = 4$, $Q = 2$)?  The definition of a quantum surface as an equivalence class --- as given in Section \ref{ss.randomgeometryGFF} --- makes sense for $\gamma = 2$, and as mentioned in the introduction.  Is the ``zipping up'' map in the coupling of Theorem \ref{reversecoupling} determined by $h$, as in the $\gamma < 2$ case?  Is it the limit of the curves one obtains by using the same GFF $\widetilde h$ but with different values of $\gamma$, and letting $\gamma$ approach $2$? Is SLE$_4$ almost surely removable, like SLE$_\kappa$ for $\kappa < 4$? {\it Update: this question remains open. However, recent results enable one to construct a quantum length measure $\nu_h$ (as well as a quantum area measure $\mu_h$) in the $\gamma=2$ case\cite{duplantier2012critical, duplantier2012renormalization}. It is natural to conjecture that zipping up gives a welding that respects this measure, and also that SLE$_4$ is removable. We note that a closely related result has been established by Tecu in \cite{tecu}, which builds on the work of \cite{2009arXiv0909.1003A}.}

\item Can the continuum limit of simple random walk on a random quadrangulation (or of Brownian motion on the corresponding Riemannian surface) be somehow understood directly, in terms of the Schaeffer or Mullin bijections, as a random process on an identified pair of continuum trees? {\em Update: The results in \cite{dms2014mating} and  \cite{ms2013qle, qlebm,qle_continuity, qle_determined} allow one to define a conformal structure (and hence a Brownian motion, with a time change defined using the ``Liouville Brownian motion'' theory developed in \cite{garbanrohdesvargas, berestyckibrownian}) on these mated pairs of continuum trees. The reader may decide whether these (SLE/LQG/QLE-based) constructions ought to be considered ``direct.''}

\item What is the most natural formulation of a higher genus quantum surface, where the conformal modulus is allowed to be random?  What is the right conjectural scaling limit of a random quadrangulation of a torus (or higher genus surface) with $n$ faces, as $n \to \infty$?  How about a surface with $n$ holes (weighting by the combined length of the hole boundaries in a critical way)?  One way to construct a quantum surface with holes is to put an independent conformal loop ensemble on top of a quantum surface and then cut out the regions surrounded by some of the loops.  (Here one would have to give a rule for specifying which loops to cut out --- for example, one might try to cut out the $k$ loops with the longest quantum boundary lengths.)  Is it possible to weld together surfaces constructed this way and produce higher genus surfaces that answer the questions above? {\it Update: See \cite{drv2015torus} for some recent work on LQG tori by David, Rhodes and Vargas, which cites many earlier works from the physics literature on higher genus LQG constructions. It remains an open problem to implement the ideas mentioned above (i.e., to construct canonical higher-genus LQG random surfaces via weldings) and to show that these approaches are equivalent to those discussed in \cite{drv2015torus}.}

\item One way to combine Liouville quantum gravity and AC geometry is to let $h$ be a complex valued free field.
 The real part of $h$ encodes Liouville quantum gravity and the imaginary part an AC geometry, which could be coupled with a conformal loop ensemble on the quantum surface \cite{MR2494457}.
 In this context, what new meaning, if any, comes out of the extra symmetries of the complex Gaussian free field (e.g., its invariance w.r.t.\ replacing $h$ with a modulus-one complex constant times $h$)?

\item Can one fully construct all rays in the AC geometry of an instance of the GFF (starting from all points in $D$) and determine which rays intersect each other and themselves, etc.?  Does the list of properties shown to hold for smooth $h$ in the appendix (Proposition \ref{smoothflowlinegeometry}) hold in the GFF setting as well? {\it Update: This question has been affirmatively answered in \cite{ms2012imag1,ms2012imag2,ms2012imag3,ms2013imag4}.}

\vspace{.1in} \begin{center}{\bf Returning to the original physics motivation$\ldots$} \end{center}

\item Can one construct a variant of Liouville quantum gravity with some of the additional complexities of a physical string theory (and figure out what the following things mean in this context mathematically)?
\begin{enumerate}
\item Minkowski space time.
\item Complex weights (in place of a probability measure).
\item Supersymmetry.
\item Gauge theories.
\item Embeddings in Calabi-Yau manifolds.
\item Quantum surfaces of non-deterministic genus.
\end{enumerate}

\item When introducing Liouville quantum gravity, Polyakov wrote in 1981 that it was necessary to develop a theory of random surfaces
``because today gauge invariance plays the central role in
physics.  Elementary excitations in gauge theories are formed by the
flux lines (closed in the absence of charges) and the time
development of these lines forms the world surfaces.  All transition
amplitude[s] are given by the sums over all possible surfaces with
fixed boundary.''  \cite{MR623209}  Can one formulate any version of this gauge/string duality as a theorem or conjecture
relating gauge theory flux lines to Liouville quantum gravity?
{\it Update: We mention one recent rigorous formulation of Yang-Mills gauge-string duality on a lattice due to Chatterjee (not obviously related to Liouville quantum gravity) \cite{2015arXiv150207719C}.}

\item Can any Liouville quantum gravity insights be used to study higher dimensional random metrics?  What are the most natural models?
{\it Update: We remark that certain higher dimensional analogs are easy to describe. If $h$ is an instance of the log-correlated Gaussian field (LGF) as defined e.g.\ in the surveys \cite{2014arXiv1407.5605D, 2014arXiv1407.5598L}, then $e^{\gamma h(z)}dz$ is easy to define as a random measure when $\gamma$ is in the right range, see e.g.\ the survey \cite{rvsurvey}. Furthermore, the restriction of a higher dimensional LGF to a (two-dimensional) plane yields a GFF on the plane, so that the higher dimensional LGF can be interpreted as a coupling of planar GFFs, one for each planar subspace; it remains unclear whether the imaginary geometry or level set structures corresponding to the individual slices can be unified in a coherent way \cite{2014arXiv1407.5598L}.}

\item Are there any natural three or four dimensional versions of AC geometry?  For example, is it possible to interpret paths in the three dimensional uniform spanning tree scaling limit \cite{MR2350070} as geodesics of a random affine connection? {\em Update: This remains open. We mention one attempt to describe a three-dimensional analog of SLE$_6$ (at least on the discrete level) via so-called tricolor percolation \cite{tricolor}, which also references relevant field theoretic constructions from the physics literature.
}
\end{enumerate}

\appendix

\section{AC geometry: real vs.\ imaginary Gaussian curvature}

This section gives a formal definition/interpretation/explanation of the AC geometry (and its relationship to Liouville quantum gravity) when $h:D \to \R$ is smooth.  Denote by $\flowset(h,D)$ the collection of curves that are flow lines of $e^{i(h/\chi + c)}$ (beginning at a point in $\overline D$),
for some $c \in [0,2\pi)$.  For each $x \in D$, there is a
one-parameter family of ``rays'' in $\flowset(h,D)$ (indexed by $c$)
that begin at $x$ and ultimately hit the boundary of $D$.  We refer
to $c$ as the {\em angle} of the ray.  Two rays of $D$ are {\em
parallel} if they have the same angle.  When $h$ is constant, these
are the rays of Euclidean geometry. In general, the collection of rays is quite
interesting and satisfies some of the axioms describing rays in Euclidean geometry.
For example, the reader may verify the following:

\begin{proposition} \label{smoothflowlinegeometry}
If $h$ is a Lipschitz function on a simply connected domain
$D\subset \C$, then the following hold:
\begin{enumerate}
\item Each ray beginning at $x \in D$ is a differentiable simple path (i.e., it doesn't intersect itself).
\item Two parallel rays in $\flowset(h,D)$ are disjoint unless one is a subset of the other.
\item Two non-parallel rays in $\flowset (h,D)$ intersect at most once in $D$.
\item If $x \in D$ lies on a ray beginning at $y \in D$, then $y$ lies on a ray of opposite
direction beginning at $x$.
\item The sum of the angles of a ``triangle''---whose edges are segments of rays in $\flowset(h,D)$---is
always $\pi$.
\end{enumerate} \end{proposition}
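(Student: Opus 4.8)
The plan is to build everything on two classical inputs: the Picard--Lindel\"of theorem, which gives existence and uniqueness of solutions of \eqref{e.flowODE} because the field $e^{i(h/\chi+c)}$ is Lipschitz and never vanishes; and the turning--tangent theorem (Hopf's Umlaufsatz) for piecewise-$C^1$ Jordan curves. The single computation that powers all five items I would record first as a lemma: any ray $\eta$ of angle $c$ is automatically $C^1$ with $|\eta'|\equiv 1$, its continuous tangent lift satisfies $\arg\eta'(t)=h(\eta(t))/\chi+c$, and since $t\mapsto h(\eta(t))$ is Lipschitz (a Lipschitz function composed with a $C^1$ curve) this lift is absolutely continuous with
$$\int_{t_0}^{t_1} d\bigl(\arg\eta'\bigr)=\frac{1}{\chi}\bigl(h(\eta(t_1))-h(\eta(t_0))\bigr).$$
That is, the total turning of the tangent along any ray segment depends only on the $h$-values at its two endpoints. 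No regularity beyond $h$ Lipschitz is needed anywhere, since the rays come out $C^1$ and the Umlaufsatz applies at that regularity.

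Item 4 is the easiest and I would do it first: if $x=\eta(s)$ lies on the ray $\eta$ of angle $c$ with $\eta(0)=y$, then $t\mapsto\eta(s-t)$ solves \eqref{e.flowODE} from $x$ with field $-e^{i(h/\chi+c)}=e^{i(h/\chi+c+\pi)}$, hence is the ray from $x$ of opposite angle $c+\pi$, and it reaches $y$ at time $s$. For item 1, if a ray $\eta$ had $\eta(t_1)=\eta(t_2)$ with $t_1<t_2$, then the tangents at these two times agree (the tangent depends only on position and on $c$), so uniqueness forces $\eta$ to be periodic and hence to contain an embedded $C^1$ Jordan curve $\Gamma$; the Umlaufsatz gives total turning $\pm2\pi$ along $\Gamma$, while the lemma gives total turning $\tfrac1\chi\oint_\Gamma dh=0$ (as $h$ is single-valued), a contradiction --- so every ray is a differentiable simple path. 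For item 2, if two rays of the same angle $c$ meet at a point $p$ they share a tangent there, so by uniqueness each is a forward sub-arc, starting from its own initial point, of the unique maximal integral curve of $e^{i(h/\chi+c)}$ through $p$; two such sub-arcs of one curve are nested, which is exactly the assertion.

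Items 3 and 5 are the Gauss--Bonnet-type consequences. For item 5, apply the Umlaufsatz to the positively oriented piecewise-$C^1$ boundary of the triangle $A\to B\to C\to A$: total turning $=2\pi=(\text{sum of the three edge turnings})+(\text{sum of the three exterior angles})$. By the lemma the edge turnings telescope, $\tfrac1\chi\bigl[(h(B)-h(A))+(h(C)-h(B))+(h(A)-h(C))\bigr]=0$, so the exterior angles sum to $2\pi$; writing each as $\pi$ minus the interior angle $\alpha_i\in(0,\pi)$ yields $\sum\alpha_i=\pi$. For item 3, suppose non-parallel rays $\gamma_1$ (angle $c_1$), $\gamma_2$ (angle $c_2$) met in at least two points. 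Using that $\gamma_1,\gamma_2$ are simple arcs (item 1), extract an innermost bigon: an embedded Jordan curve $\Gamma$ consisting of a sub-arc of $\gamma_1$ from $p$ to $q$ followed by a sub-arc of $\gamma_2$ from $q$ back to $p$. Apply the Umlaufsatz: the two edge turnings are $\tfrac1\chi(h(q)-h(p))$ and $\tfrac1\chi(h(p)-h(q))$ and cancel, so the exterior angles $\delta_p,\delta_q$ must satisfy $\delta_p+\delta_q=\pm2\pi$. But a short computation of these angles from the tangent directions at $p$ and $q$ (which involve only $c_1,c_2$, with a possible shift by $\pi$ according to whether $\Gamma$ runs with or against the $\gamma_2$-flow) shows $\delta_p\equiv-\delta_q\pmod{2\pi}$, and, once the anti-parallel case $c_2-c_1\equiv\pi$ is set aside, both lie in $(-\pi,\pi)$, so $\delta_p+\delta_q=0$ --- a contradiction. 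The excluded case $c_2=c_1+\pi$ is handled directly by item 1: then $\gamma_1$ and $\gamma_2$ are forward sub-arcs of the mutually opposite fields $\pm e^{i(h/\chi+c_1)}$, i.e.\ of a single flow line up to orientation, whose intersections item 1 already controls.

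\textbf{Expected main obstacle.} The delicate part is the topological bookkeeping in item 3: making the ``innermost bigon'' extraction rigorous (standard for simple arcs, but it should be stated carefully), and pinning down the signs and the $(-\pi,\pi)$ normalization of $\delta_p,\delta_q$ so that the contradiction $0=\pm2\pi$ is clean --- in particular checking that the two corners are genuinely transversal (not cusps) exactly when $c_1\not\equiv c_2$ and $c_1\not\equiv c_2+\pi$. Everything else is routine once the turning lemma is established.
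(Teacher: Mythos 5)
The paper itself gives no proof of this proposition (it is introduced with ``the reader may verify the following''), so there is no argument of the paper to compare against line by line. Your route --- the turning lemma $\arg\eta'(t)=h(\eta(t))/\chi+c$ together with Hopf's Umlaufsatz --- is exactly the argument suggested by the winding discussion in Section \ref{windingintro}, and it is carried out correctly for items 1, 2, 4 and 5: Picard--Lindel\"of applies because $z\mapsto e^{i(h(z)/\chi+c)}$ is Lipschitz, uniqueness gives item 4 and the nestedness claim in item 2 (both rays through a common point are arcs of one maximal orbit, run forward until exiting $D$, hence nested), a self-intersection would force a periodic orbit whose net tangent turning is simultaneously $\pm2\pi$ and $\tfrac1\chi\bigl(h(\eta(t_2))-h(\eta(t_1))\bigr)=0$, and the telescoping of edge turnings (unaffected by traversing an edge backwards, since reversal shifts the tangent angle by the constant $\pi$) gives the angle sum $\pi$. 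Your transversal-case bigon argument for item 3 is also sound, and is in fact easier than you fear: since the crossings are transversal when $c_1-c_2\not\equiv 0\pmod\pi$, the common points are isolated, and taking two of them adjacent along $\gamma_1$ already yields a Jordan curve (any common point of the chosen $\gamma_1$-sub-arc and the $\gamma_2$-sub-arc is an intersection point lying on that $\gamma_1$-sub-arc, hence one of the two endpoints), after which your computation $\delta_p=-\delta_q\in(-\pi,\pi)$ contradicts the Umlaufsatz.

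The one genuine gap is the anti-parallel case $c_2=c_1+\pi$ of item 3, which you dismiss with ``item 1 already controls'' the intersections. It does not. If the starting point of $\gamma_2$ lies on $\gamma_1$ --- precisely the situation produced by item 4 --- then by uniqueness $\gamma_2$ retraces $\gamma_1$ backwards, so the two rays share an entire sub-arc and hence infinitely many points; simplicity of each individual ray is irrelevant to this. Indeed, with the paper's convention that ``parallel'' means equal angle $c$, anti-parallel rays are ``non-parallel,'' and item 3 as literally stated fails for them. The correct statement in that case, which follows from uniqueness exactly as in your item 2, is a dichotomy: two rays of angles $c$ and $c+\pi$ either are disjoint (they lie on different maximal orbits of the field $e^{i(h/\chi+c)}$) or both are sub-arcs of one simple maximal orbit, so they overlap along an arc and never cross. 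You should either prove this dichotomy explicitly, or note that item 3 must be read for rays whose angles differ by a non-multiple of $\pi$ (equivalently, interpret ``parallel'' modulo $\pi$), in which case your bigon argument covers all remaining cases. Apart from this corner case, the proposal is complete.
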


When $h$ is the GFF, the ``flow line'' described by the coupling in Theorem \ref{forwardcoupling} begins at a special location on the boundary of $D$, but in principle one would like to make sense of the entire set $\flowset(h,D)$ when $h$ is an instance of the Gaussian free
field and $\chi$ is a positive constant.  Even when $h$ is smooth, the AC geometry is not a Euclidean or non-Euclidean geometry in the usual sense; in particular, it does not come with a notion of length or distance.  We will interpret the AC rays of a smooth $h$ as the geodesics of a random torsion-free (defined below) affine connection that is naturally dual to the Levi-Civita connection of a Liouville quantum surface.

Recall that an {\em affine connection} determines, for any smooth path segment $P$ in $D$, an orthogonal map on the two-dimensional tangent space, which we may represent as multiplication by a complex number $\alpha(P)$.  Intuitively, it describes the way a small object transforms under parallel transport (``sliding without rotating'') along the path $P$.  The Levi-Civita connection for a Riemannian metric is (by definition) the unique metric-preserving and torsion-free affine connection.  For a two dimensional metric conformally parameterized by a subset of $D$ with area measure $e^{\gamma h(z)}dz$, metric preservation implies that
\begin{equation} \label{logalpha} \log |\alpha(P)| = \frac{\gamma}{2}h(z_1) - \frac{\gamma}{2} h(z_2),\end{equation} where $z_1$ and $z_2$ are the first and last endpoints of $P$.  In particular, this quantity is independent of the trajectory $P$ takes between $z_1$ and $z_2$.

We recall the following standard fact \cite{2008arXiv0808.1560D} (see also keywords ``isothermal coordinates'' and ``Gaussian curvature'' in any text on Riemannian surfaces): write $\lambda = \gamma h$
and note that given a measurable subset $A$
of $D$, the integral $\int_A e^{\lambda(z)} dz$ (where $dz$ denotes
Lebesgue measure on $D$) is the area of the portion of $\mathcal M$
parameterized by $A$.  The function $K = - e^{-\lambda} \Delta
\lambda$ (where $\Delta \lambda := \lambda_{xx} + \lambda_{yy}$ is
the Laplacian operator) is called the {\bf Gaussian curvature} of
$\mathcal M$.  If $A$ is a measurable subset of the $(x,y)$
parameter space, then the integral of the Gaussian curvature with
respect to the portion of $\mathcal M$ parameterized by $A$ can be
written $\int_A e^{\lambda(z)} K(z) dz = \int_A -\Delta \lambda(z)
dz$ where $dz$ denotes Lebesgue measure on $D$. In other words,
$-\Delta \lambda$ gives the density of Gaussian curvature in the
isothermal coordinate space.  In particular, $\mathcal M$ is flat if
and only if $h$ is harmonic.

We may define $\mathcal A(z,v)$ for each $z \in D$ and $v \in \C$ so that if $P$ is parameterized by $t \in [0,1]$ we have
$$\log \alpha(P) = \int_0^1 \mathcal A\bigl(P(t), \frac{\partial}{\partial t} P(t)\bigr) dt.$$
We may identify $\C$ with $\R^2$ so that $\nabla h$ is a vector-valued function of $\R^2$; given $v \in \C$, let $[\nabla h\cdot v]$ denote the dot product of this vector with the vector $v$ (viewed as an element of $\R^2$), which is a real number.

In the case of the Levi-Civita connection, we have $\Re \mathcal A(z,v) = \frac{-\gamma}{2} [\nabla h \cdot v]$.
Given this, $\Im \mathcal A(z,v)$ is determined by the requirement that the connection is torsion-free.
Using this notation, the statement that the connection is torsion-free means
that $\mathcal A(z,iv) = -i \mathcal A(z,v)$ for all $z$ and $v$, which implies
$$\mathcal A(z,v) = \frac{-\gamma}{2} [\nabla h \cdot v] + i\frac{-\gamma}{2} [\nabla h \cdot iv].$$
If $P$ is the boundary of a smooth region $R \subset D$, then Green's theorem implies that \begin{equation} \label{eqn::gcurvature} \log \alpha(P) = i \int_R -\frac{-\gamma}{2} \Delta h(z) dz.\end{equation}  The RHS of \eqref{eqn::gcurvature} is (up to a constant imaginary multiplicative factor) the integral of the Gaussian curvature over the region of the surface parameterized by $R$.  In fact, one may even {\em define} Gaussian curvature to be the function for which this the case.

The AC rays are the geodesics of a connection satisfying an analog of \eqref{logalpha}:
\begin{equation} \label{argalpha} \arg \alpha(P) = h(z_2)/\chi - h(z_1)/\chi,\end{equation}
and if we require that this connection also be torsion-free we obtain
$$\mathcal A(z,v) = i [\nabla h \cdot v]/\chi - [\nabla h \cdot iv]/\chi,$$
and hence, if $P$ is the boundary of a smooth region $R$, then $$\log \alpha(P) = \int_R (- \Delta h(z)/\chi) dz \in \R.$$
Intuitively, this means that a small object sliding around the path $P$ will undergo no net rotation, but will change in size (the opposite of what happens in a Levi-Civita connection of a Riemannian surface).  The definition of Gaussian curvature density suggested above, in terms of \eqref{eqn::gcurvature}, suggests an imaginary Gaussian curvature proportional to the real curvature obtained in Liouville quantum gravity.

We remark that the Laplacian of the GFF has a natural interpretation as the charge density of a two dimensional Coulomb gas (see the discussion and references in the introduction to \cite{MR2486487}).  Thus, Liouville quantum gravity can be interpreted as imposing a Gaussian curvature density equal to a real constant times this charge density.  AC geometry is the same, except that the constant is required to be imaginary.

\bibliographystyle{halpha}
\addcontentsline{toc}{section}{Bibliography}
\bibliography{welding}

\bigskip

\filbreak
\begingroup
\small
\parindent=0pt

\bigskip
\vtop{
\hsize=5.3in
Department of Mathematics\\
Massachusetts Institute of Technology\\
Cambridge, MA, USA } \endgroup \filbreak \end{document}